\documentclass[12pt]{amsart}
\usepackage[usenames,dvipsnames]{xcolor}
\usepackage{tikz}
\usepackage{fancyhdr}
\usepackage{txfonts}
\usepackage{graphicx}
\usepackage{epsfig}
\usepackage{mathrsfs}
\usepackage{amssymb}
\usepackage{latexsym}
\usepackage{amsmath} 
\usepackage{cancel}

\usepackage{indentfirst}
\allowdisplaybreaks
\usepackage{amsfonts}
\usepackage{amsbsy}
\usepackage{amscd}
\usepackage{subfigure}
\usepackage{verbatim}
\usepackage{color,tikz}
\usetikzlibrary{calc}

\usepackage{amsthm}
\usepackage{amsfonts}
\usepackage{amsbsy,calc}
\usepackage{graphicx,ifthen,cite}

\usepackage{booktabs}
\usepackage{graphicx}

\usepackage[normalem]{ulem}

\newcommand{\A}{{\mathcal A}}
\newcommand{\B}{{\mathcal B}}
\newcommand{\D}{{\mathcal D}}

\renewcommand{\P}{{\mathcal P}}

\newcommand{\R}{{\mathbb R}}
\newcommand{\Z}{{\mathbb Z}}
\newcommand{\C}{{\mathbb C}}

\newcommand{\N}{{\mathbb N}}

\newcommand{\la}{\lambda}
\newcommand{\La}{\Lambda}
\newcommand{\Ga}{\Gamma}

\newtheorem{prop}{Proposition}[section]
\newtheorem{lem}[prop]{Lemma}

\newtheorem{defi}[prop]{Definition}
\newtheorem{coro}[prop]{Corollary}
\newtheorem{theo}[prop]{Theorem}

\newtheorem{exam}[prop]{Example}

\newtheorem{problem}{Problem}
\newtheorem{conjecture}{Conjecture}

\newcommand{\dev}[2][]{{\,\textup{d}}^{#1}#2}

\def\dmu{\dev\mu}

\newif\ifdraft
\drafttrue
\ifdraft
\else
\fi
\numberwithin{equation}{section}
\title{}
\author{}
\date{}

\begin{document}
\baselineskip 16pt

\title[Characterization on spectra of self-similar measures and the dual spectral set conjecture]{Characterization on spectra of  self-similar measures and the dual spectral set conjecture}

\author[L.-X. An]{ Li-Xiang An}
\address{School of Mathematics and Statistics, Key Laboratory of Nonlinear Analysis \& Applications (Ministry of Education), Central China Normal University, Wuhan 430079, P. R. China.}
\email{anlixianghai@163.com;}

\author[Y.-S. Fu]{Yan-Song Fu$^*$}\address{Department of  Mathematics   \\ China University of Mining and Technology (Beijing) \\ Beijing, 100083\\P. R. China.}\email{yansong$\_$fu@126.com;}

\author[M.-X. Jiang]{Ming-Xuan Jiang$^*$}
\address{School of Mathematics and Statistics, Key Laboratory of Nonlinear Analysis \& Applications (Ministry of Education), Central China Normal University, Wuhan 430079, P. R. China.}
\email{mingxuanjiang1212@163.com.}

\thanks{*Corresponding author.}
\thanks{L.-X. An and  M.-X. Jiang are supported by the National Natural Science Foundation of China (12171181 and 12371087). Y.-S. Fu is supported by the National Natural Science Foundation of China (12371090).} 
\def\subjclassname{2020 Mathematics Subject Classification}
\subjclass[2020]{Primary 28A80, 42A65,52C22}
\keywords{Spectral measures; self-similar measures; the dual spectral set conjecture; spectra; tilings}
\date{}

\begin{abstract}
A discrete set $\Lambda$ is called a {\it spectrum} of a Borel probability measure $\mu$ if  the exponential functions $\{e^{2\pi i \langle\la, x\rangle}:\la\in\Lambda\}$ form an orthonormal basis for $L^2(\mu)$. In this work, we  first give necessary and sufficient conditions for a self-replicating translation  set to be a spectrum  of the self-similar  measure associated with a product-form Hadamard triple, which were discovered by the first-named author and Lai [Adv. Math. 431 (2023)  Paper  No.109257]. These results extend the
studies by {\L}aba and Wang [J. Funct. Anal. 193 (2002) 409-420], Dutkay and Lai [J. Math. Pures Appl. 107 (2017) 183-204] in $\R$. As an application, we can explicitly construct such  a self-replicating  spectrum. Finally, we prove that the dual spectral set conjecture holds for a self-replicating translation set. 
\end{abstract}

\maketitle

\section{Introduction}

\subsection{Background}
A Borel probability measure $\mu$ on $\R^d$ is called a \emph{spectral measure} if there exists a discrete set $\La\subset\R^d$, known as a {\it  spectrum} of $\mu$, such that the family of complex exponential functions  $$E(\Lambda):=\left\{e^{2\pi i \langle\la, x\rangle}:\lambda\in\Lambda\right\}$$ forms an orthonormal basis for  $L^2(\mu)$.  $(\mu, \Lambda)$ is referred to as a \emph{spectral pair}.

Spectral measures are a natural generalization of spectral sets. A measurable set $\Omega$ in $\R^d$ with positive and finite measure is called a {\it spectral set} if $L^2(\Omega)$ has an orthonormal basis $E(\Lambda)$. Spectral sets have been studied rather extensively, particularly over the past fifty years. (A partial list of these studies can be found in the references of this paper.) The
major unsolved problem concerning spectral sets is the following conjecture of Fuglede\cite{Fug1974}:
\begin{conjecture}[The spectral set conjecture]
Let $\Omega \subset \mathbb{R}^d$ be a set with positive and finite Lebesgue measure. Then  $\Omega$  is  a   spectral set if and only if  $\Omega$  is  a translational tile. 
\end{conjecture}
$\Omega$ is  called a {\it translational tile} if there exists a discrete set ${\mathcal J}\subset \R^d$ such that
$$\Omega+{\mathcal J}=\R^d\quad \text{and} \quad (\Omega+t)^{\rm o}\cap (\Omega+t')^{\rm o}=\emptyset, ~ t\not=t'\in \mathcal J,$$ in which case  ${\mathcal J}$ is  called a {\it tiling set} and $(\Omega, {\mathcal J})$  is  a {\it tiling pair}. 
Dually,  Jorgensen and Pedersen  \cite{JP1999} in 1999 proposed the following conjecture:
\begin{conjecture}[The dual spectral set conjecture]
Let $\Lambda\subset\R^d$. Then $\Lambda$ is a spectrum if and only if $\Lambda$ is a tiling set, i.e., there exists a set $\Omega$ so that $(\Omega, \Lambda)$ is a spectral pair if and only if there exists a set $\Omega'$ so that $(\Omega', \Lambda)$ is a tiling pair.
\end{conjecture}

Nowadays we know  that the conjectures are not true without imposing extra
assumptions on the domain or the frequency.  Fuglede's spectral set conjecture was first disproved by Tao  \cite{Tao2004} in $\R^d$ for $d\geq5$, showing that spectrality does not imply tile. It was further disproved by Kolountzakis and Matolcsi \cite{KM2006,KM2006-1}, who showed that the conjecture fails in both directions for dimensions $d \geq 3$. 
It is still open in one and two dimensions, and the analogous conjectures have  been extensively investigated on convex bodies \cite{LevMat2022}, $p$-adic field $\mathbb{Q}_p$ \cite{FFS2016,FFLS2019}, and finite Abelian groups \cite{FKS2022,IosMayPak2017,M2022,S2020,Zh2023}.

 The aim of this paper is to study Fuglede's problem in the setting of self-similar measures supported on self-similar sets.  Let $N\geq2$ be a positive integer and $\mathcal{D}\subset\mathbb{Z}$ be a finite set.  We consider the following \emph{iterated function system} (IFS)
$$\Phi:=\left\{\phi_{d}(x)=N^{-1}(x+d):d\in\mathcal{D}\right\}.$$
Hutchinson \cite{Hut1981} proved that there is a unique non-empty compact set $T(N,\mathcal{D})$, called the {\it attractor} or {\it self-similar set} of  $\Phi$, such that
   $$T(N,\mathcal{D})=\bigcup_{d\in\mathcal{D}}\phi_{d}(T(N,\mathcal{D})).$$ 
Moreover, he showed that there exists a unique Borel probability measure $ \mu_{N,\mathcal{D}}$ (with equal weights) on the attractor $T(N,\mathcal{D})$ such that
$$\mu_{N,\mathcal{D}}(E)=\frac{1}{\#\mathcal{D}} \sum_{d\in\mathcal{D}}\mu_{N,\mathcal{D}}\left(\phi^{-1}_{d}(E)\right),\quad\forall E\;  \text{Borel}.$$
The measure $\mu_{N,\mathcal{D}}$ is called the \emph{self-similar measure} of the IFS  $\Phi$. 


In 1998,  Jorgensen and Pedersen \cite{JP1998} proved that the middle-fourth Cantor measure $\mu_{4,\{0,2\}}$ has a spectrum 
$$\Lambda(4, \{0, 1\}):=\left\{\sum_{j=0}^k4^j\ell_j: \ell_j\in\{0, 1\} \text{ for each $j$ and } k\geq0\right\},$$ which established the first example of a singular continuous fractal spectral measure.  Strichartz \cite{Str1998,Str2000} and later {\L}aba and Wang \cite{LW2002} extended this result to the case of self-similar measures. In their work, the concept of Hadamard triples plays an important role. 
 Recall that,  for $1<N\in\N$ and digit sets $\D, {\mathcal L}\subset\Z$  with $\#\D=\#{\mathcal L}$, the triple $(N, \mathcal{D}, \mathcal{L})$  is called a \textit{Hadamard triple} if the matrix
$$\frac{1}{\sqrt{\#\mathcal{D}}}\left(e^{-2\pi i \frac{d \ell}{N}}\right)_{d\in\mathcal{D}, \ell\in\mathcal{L}}$$
is  unitary. Using the Hadamard triple  $(N, \mathcal{D}, \mathcal{L})$  and the associated Ruelle transfer operator
\begin{equation}\label{R-1}
   \mathcal{R}(f)(\xi)=\sum_{\ell\in\mathcal{L}} \left|M_\mathcal{D}\left(\phi_{\ell}(\xi)\right)\right|^2 f\left(\phi_{\ell}(\xi)\right),
\end{equation}
where
$ M_\mathcal{D}(\xi):=\frac{1}{\#\mathcal{D}}\sum_{d\in\mathcal{D}}e^{-2\pi id\xi},$
$\phi_{\ell}(\xi):=N^{-1}(\xi+\ell), ~\ell \in \mathcal{L},$
{\L}aba and Wang \cite{LW2002} showed that, under the assumption $\gcd(\mathcal{D})=1$,  the following \textit{self-replicating translation set} 
$$\Lambda(N, {\mathcal L}):=\left\{\sum_{j=0}^k N^j\ell_j: \ell_j\in{\mathcal L} \text{ for each $j$ and } ~~k\geq 0\right\}$$
forms a spectrum of $\mu_{N, \D}$ if there exists no nonzero integral cycle point for ${\mathcal L}$ (see Subsection \ref{sec.1.main} for the definition). Subsequent  studies by  Dutkay and   Jorgensen\cite{DJ2006,DJ2007,DJ2009} and Dutkay and Lai \cite{DL2017} extended this result to higher dimensions: if $\Lambda$ is the smallest set containing all integral cycle points and satisfying ${\mathcal L}+N\Lambda\subset\Lambda$, then $\Lambda$ is a spectrum of $\mu_{N, \D}$.  To date,  the characterization of the spectra of self-similar spectral measures has been extensively studied by using Hadamard triples, see, e.g., 
\cite{AnDongHe2022, Dai2016,  DaiHeLai2013, DHS2009, FuHe2017, FuHeWen2018, HeTangWu2019} and references therein.

 However, the existence of a Hadamard triple is not necessary for a self-similar measure to be spectral.  Recently, motivated by the study of the product-form self-similar tiles (see, e.g., \cite{LLR2017}), the first-named author and Lai \cite{AnLai2023} introduced a broader class of spectral self-similar  measures generated by \emph{product-form} Hadamard triples:

\begin{defi}[\!\!\cite{AnLai2023}]\label{defi1}
Let $N\geq 2$ be an integer and $\mathcal{A}=\{a_s: s=0,1,\ldots,n-1\}$ be a subset of integers. For each $s$, let $\mathcal{B}_s$ be another finite subset of integers. We say that $\mathcal{D}$ is a product-form digit set generated by  Hadamard triples $(N, \mathcal{A}, \mathcal{L}_1)$ and $(N, \mathcal{B}_s, \mathcal{L}_2)$ if there exists $r\geq1$ such that
\begin{equation}\label{eq1}
\mathcal{D}=\mathcal{D}_{r}=\bigcup_{s=0}^{n-1} \left(a_s+N^r\mathcal{B}_s\right)
\end{equation}
and we have the following conditions for $\mathcal{A}$, $\mathcal{B}_s$, $\mathcal{L}_{1}$ and $\mathcal{L}_{2}$:
	\begin{enumerate}
\item[(i)] $(N, \mathcal{A},\mathcal{L}_1)$, $(N, \mathcal{B}_s,\mathcal{L}_2)$ are  Hadamard triples for all $s=0,1,\ldots,n-1$;

\item[(ii)] $(N, \mathcal{A}\oplus \mathcal{B}_s, \mathcal{L}_1\oplus \mathcal{L}_2)$ are  Hadamard triples for all $s=0,1,\ldots,n-1$.
	\end{enumerate}
We will call $(N, \mathcal{D}, \mathcal{L}_{1}\oplus \mathcal{L}_{2})$ a product-form Hadamard triple if there exist $\mathcal{A}$ and $\mathcal{B}_s$ such that $\mathcal{D}$ is written as in (\ref{eq1}) and {\rm (i), (ii)} hold.
\end{defi}
Here and in what follows, for two finite sets $A$ and $B$, we write $$A\oplus B=\left\{a+b: a\in A, b\in B\right\},$$ where all elements $a+b$ are distinct, so that $\#(A\oplus B)=(\#A)(\#B)$.  This concept can also be defined for more summands. 

In the same paper \cite{AnLai2023}, the authors  proved that the  associated self-similar measure is  a spectral measure by  demonstrating the existence of a spectrum.   
This naturally leads to the following two fundamental problems: 
 
\begin{problem}\label{prom.1}
Given a spectral self-similar  measure $\mu_{N, \D}$ generated by a product-form Hadamard triple,  how can we characterize self-replicating translation sets that are spectra?
\end{problem}

\begin{problem}\label{prom.d.s}
    Does the dual spectral set conjecture hold for self-replicating translation sets?
\end{problem}
A set ${\mathcal J}$  is called  a {\it self-replicating translation set} with respect to $(N, \D)$ if ${\mathcal J}= {\mathcal D}+N{\mathcal J}$. Addressing Problems  \ref{prom.1}  and \ref{prom.d.s} will lead  to several novel applications.

\begin{itemize}
\item 
The case \(r=0\) with all $\B_s$ identical yields a standard Hadamard triple $(N, \D, \mathcal{L}_1\oplus \mathcal{L}_2)$, which has been  studied  by Strichartz \cite{Str1998}, 
{\L}aba and Wang \cite{LW2002}, Dutkay and   Jorgensen\cite{DJ2006,DJ2007,DJ2009}, Dutkay and Lai  \cite{DL2017}, and Dutkay, Haussermann and Lai \cite{DHL2019}.  As their applications, Strichartz \cite{Str2000,Str2006} and Dutkay, Han and Sun \cite{DHS2014} pioneered the study of the convergence and divergence of mock Fourier series associated to different self-replicating spectra, respectively.   Surprisingly, the choice among different self-replicating spectra  dramatically affects the convergence behavior. Problem \ref{prom.1} suggests that  the theory of Fourier series can also be studied in such a relaxed form. We anticipate such a study in the future. 

\smallskip

\item 
  Problem \ref{prom.d.s} will shed some light on the dual spectral set conjecture on self-similar tiles.  Bandt \cite{B91} showed that when \(\#D=N\) and \(T(N,\mathcal{D})\) has non-empty interior, it is a  {\it translational tile}. Such a set \(T(N,\mathcal{D})\)  is known as {\it self-similar tile}   and \(\D\) is referred to as  a {\it tile digit set} with respect to \(N\).  Recently, Li and Rao \cite{LR2025} completely characterized the tile digit sets by employing skew-product-form digit sets, which were originally  introduced  by the first-named author with Lau \cite{AL19} and with Lai \cite{AnLai2023}. A set $\D$ is called a {\it skew-product-form digit set} with respect to $N$ if
there exist $\A=\{a_s\}_{s=0}^{n-1} \subset \mathbb{Z} \text{ and } \B_0, \B_1, \ldots, \B_{n-1} \subset \mathbb{Z}$  such that
$$\D_r = \bigcup_{s=0}^{n-1} (a_s + N^r \B_s)$$
and $\A \oplus \B_s$ is a complete residue set modulo $N$ for all $s = 0, 1, \ldots, n-1$. 
 Lau and Rao \cite{LR03} showed that every self-similar tile in $\R$ has a unique periodic and self-replicating tiling set. As we shall see in Theorem \ref{thm.dua.} below, the self-replicating spectrum-tiling connection seems to be equally compelling.
\end{itemize}

\subsection{Main results}\label{sec.1.main}
The first objective of this paper is to study Problem \ref{prom.1} in the case $r\geq 1$. According to the argument of \cite{AnLai2023},  the digit set with product-form Hadamard triples can be reduced to the case $r=1$ with some higher power of $N$. As the spectrality of $\mu_{N,\mathcal{D}}$ is invariant under translations and scalings of $\mathcal{D}$, we assume throughout this paper that
\begin{align}\label{eq6}
\begin{split}
    &\mathcal{D}=\D_1=\bigcup_{s=0}^{n-1} \left(a_s+N\mathcal{B}_s\right) \quad \text{where $0\in\mathcal{D}$
  and $\gcd(\mathcal{D}_{0})=1$, $\mathcal{D}_{0}=\bigcup_{s=0}^{n-1}\left(a_s+\mathcal{B}_{s}\right)$.}\\
    &\text{$\widetilde{\mathcal L}=\mathcal{L}_1\oplus\frac{\mathcal{L}_2}{N},\qquad\mathcal{L}=\mathcal{L}_1\oplus\mathcal{L}_2$ \quad with \quad $0\in \mathcal{L}_1\cap\mathcal{L}_2$.}   
\end{split}
    \end{align}
As in \cite{AnLai2023}, under the assumption of product-form Hadamard triples, the set  $\widetilde{\mathcal L}$ is a spectrum of the discrete measure $\delta_{\frac{\D}{N}}$. Moreover, for each integer $p\geq1$, the set 
$$\widetilde{\bf L}_p=\widetilde{\mathcal L}+ N\widetilde{\mathcal L}+\cdots+ N^{p-1}\widetilde{\mathcal L}$$ is a spectrum of the convolution of the first $p$ discrete measures $\mu_p=\delta_{\frac{\D}{N}}\ast\cdots\ast\delta_{\frac{\D}{N^p}}$. Here and below, for a finite set $E,$ the symbol  $\delta_{E}$ denotes the discrete measure $\delta_{E}:=\frac{1}{\# E}\sum_{e\in E}\delta_{e}$, where $\delta_e$ is the Dirac measure concentrated at $e$. Then the set $$\Lambda(N, \widetilde{\mathcal L})=\bigcup_{p=1}^\infty \widetilde{\bf L}_p$$ forms an orthonormal set of $\mu_{N,\D}$, where $\mu_{N,\mathcal{D}}$ is the weak limit of $\mu_p$. That is, $E(\Lambda(N, \widetilde{\mathcal L}))$ forms an orthonormal set in $L^2(\mu_{N,\mathcal{D}})$.

This paper investigates Problem \ref{prom.1} by analyzing the following problem: 
\begin{problem}\label{prom2}
With the above notation,  does the set $\La\left(N,\widetilde{\mathcal L}\right)$ form a spectrum of $\mu_{N,\mathcal{D}}$?
  
   In general, let $\Lambda$ be a self-replicating translation set with respect to $(N,\widetilde{{\mathcal L}})$, that is,  $\Lambda=\widetilde{{\mathcal L}}\oplus N\Lambda$. 
   What are the necessary and sufficient conditions for $\La$ to be a spectrum of $\mu_{N,\mathcal{D}}$?
   \end{problem}
  

The problem is  far more challenging in this regime because \(\widetilde{\mathcal L}\) is no longer contained in $\Z$ and the exponential set \(E(\Z)\) is not complete in $L^2(\mu_{N, {\mathcal D}})$. A further complication arises from the fact that such a pair $(\D, \widetilde{\mathcal L})$  cannot be used to construct  a Ruelle transfer operator as in \eqref{R-1}. To overcome these difficulties, this paper develops a new approach in Section \ref{sec.2}.

In order to state our main results, we need to introduce the necessary notation. 
Consider the IFS $\left\{\phi_{\ell}(\xi)=N^{-1}(\xi+\ell): \ell \in \mathcal{L}\right\}$.  
 Let  $[-r,r]$ be a  closed interval  that is invariant under the maps
$\phi_{\ell}(\xi)$ and contains both the self-similar set $T(N,\mathcal{L})$ and the finite set $\frac{\mathcal L}{N-1}$.  A real number  $\xi$ is said to be  a {\it  cycle point} for ${\mathcal L}$ if there exist  $m\in\N$,
$\ell_1,\dots,\ell_m\in\mathcal{L}$ such that $\xi$ is the fixed point of the composite function $\phi_{\ell_m}\circ\cdots\circ\phi_{\ell_1}$. If, in addition, $\xi\in\mathbb{Z}$, then $\xi$ is called an {\it integral cycle point}.

Suppose that the self-replicating translation set $\Lambda=\widetilde{{\mathcal L}}\oplus N\Lambda$ satisfies  that $\Lambda \subset \frac{\mathbb{Z}}{N}$. Then it is straightforward to verify that
 \begin{equation}\label{eq.Lambda}
  \Lambda=\frac{\mathcal{L}_2}{N}\oplus \mathcal{L}_1\oplus N\La=:\frac{\mathcal{L}_2}{N}\oplus\Gamma,
 \end{equation} 
where $\Gamma=\mathcal{L}_1\oplus N\La\subset\Z$.  Here, $\La$ is an orthonormal set of $\mu_{N,\mathcal{D}}$  (see Lemma \ref{prop.rep1}). Denote
$$\mathcal{O}:=\left\{\xi\in [-r,r]: \frac{1}{n}\sum_{s=0}^{n-1}\left|M_{\mathcal{B}_{s}}(\xi)\right|^{2}>0\right\} ,$$ 
and
 $$\mathcal{W}(\widehat{\mu}_{N,\mathcal{D}}):=\left\{\xi\in \mathcal{O}: \widehat{\mu}_{N,\mathcal{D}}(\xi+\gamma)=0, \forall\gamma\in \Gamma\right\},$$
where $M_{\mathcal{B}_{s}}(\xi)= \frac{1}{\#\mathcal{B}_{s}}\sum_{b\in\mathcal{B}_{s}}e^{-2\pi ib\xi}$
and $\widehat{\mu}_{N,\mathcal{D}}$ denotes the Fourier transform of ${\mu}_{N,\mathcal{D}}$. 

Our first main result is stated as Theorem \ref{th2}, which essentially solves Problems \ref{prom.1} and \ref{prom2}.
\begin{theo}\label{th2}
With the above notation, the following statements are equivalent:
	\begin{enumerate}
\item[(i)] $\La$ is not a spectrum of $\mu_{N,\mathcal{D}}$;
\item[(ii)] There exists an integral cycle point $\xi$ for ${\mathcal L}$ such that $-\xi\not\in\Gamma$;
\item[(iii)] $\mathcal{W}(\widehat{\mu}_{N,\mathcal{D}})\neq\emptyset$.
\end{enumerate}
\end{theo}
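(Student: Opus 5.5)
We prove the cycle (iii)$\Rightarrow$(i)$\Rightarrow$(ii)$\Rightarrow$(iii). Throughout we use the standard Jorgensen--Pedersen criterion: an orthonormal set $\Lambda$ is a spectrum of $\mu=\mu_{N,\mathcal D}$ if and only if
\[
Q_\Lambda(\xi):=\sum_{\lambda\in\Lambda}|\widehat\mu(\xi+\lambda)|^2\equiv 1 .
\]
Since $\Lambda$ is orthonormal (Lemma \ref{prop.rep1}), $Q_\Lambda\le 1$ everywhere. The function $Q_\Lambda$ is entire-type: it extends analytically, because $\widehat\mu$ is an infinite product of trigonometric polynomials.

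First we factor the measure. Write $\widehat\mu(\xi)=M_{\mathcal D}(\xi/N)\,\widehat\mu(\xi/N)$, and use the product structure
\[
M_{\mathcal D}(\xi/N)=\frac1n\sum_s e^{-2\pi i a_s\xi/N}M_{\mathcal B_s}(\xi).
\]
Using the decomposition \eqref{eq.Lambda}, $\Lambda=\frac{\mathcal L_2}{N}\oplus\Gamma$ with $\Gamma=\mathcal L_1\oplus N\Lambda$, we derive a transfer-type identity expressing $Q_\Lambda(\xi)$ through values of $\widehat\mu$ at $\xi+\gamma$, $\gamma\in\Gamma$. The Hadamard conditions (i) and (ii) of Definition \ref{defi1} let us sum out the $\mathcal L_2/N$ and $\mathcal L_1$ components. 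The weight $\frac1n\sum_s|M_{\mathcal B_s}(\xi)|^2$ then appears, and this explains the set $\mathcal O$. The goal is an identity of the form
\[
Q_\Lambda(\xi)=\sum_{\ell\in\mathcal L}W(\phi_\ell(\xi))\,Q_\Lambda(\phi_\ell(\xi)),
\]
where the weights $W\ge 0$ satisfy $\sum_\ell W(\phi_\ell\xi)=1$, so that $1-Q_\Lambda$ is harmonic for a Ruelle-type operator on $[-r,r]$.

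\textbf{(iii)$\Rightarrow$(i).} Take $\xi\in\mathcal W(\widehat\mu)$. Then
\[
Q_\Lambda(\xi)=\sum_{\ell_2\in\mathcal L_2}\sum_{\gamma\in\Gamma}\bigl|\widehat\mu(\xi+\ell_2/N+\gamma)\bigr|^2 .
\]
We show this vanishes. Translates by $\ell_2/N$ are handled by the factor $M_{\mathcal B_s}$: the Hadamard property of $(N,\mathcal B_s,\mathcal L_2)$ and $\xi\in\mathcal O$ transfer the vanishing condition, possibly after adjusting the definition via the identity above. Hence $Q_\Lambda(\xi)<1$, and $\Lambda$ is not a spectrum.

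\textbf{(i)$\Rightarrow$(ii).} Assume $Q_\Lambda\not\equiv1$. By continuity and the harmonicity identity, the minimum of $Q_\Lambda$ on the compact invariant set $T(N,\mathcal L)$, or on $[-r,r]$, is attained at some $\xi_0$ with $Q_\Lambda(\xi_0)<1$. The standard minimum-principle argument (as in \L aba--Wang and Dutkay--Jorgensen) propagates the minimum along $\phi_\ell(\xi_0)$ for every $\ell$ with $W(\phi_\ell\xi_0)>0$. The resulting orbit stays in a finite set, because the zero set of the analytic $1-Q$ intersected with the attractor is finite unless $Q\equiv 1$. The orbit therefore contains a cycle $\{\xi_1,\dots,\xi_m\}$ on which the weights are $1$, i.e. the cycle points have $|M_{\mathcal D}|=1$ along it. The condition $|M_{\mathcal D}(\cdot)|=1$ together with $\gcd(\mathcal D_0)=1$ and $0\in\mathcal D$ forces the cycle points to be integers, so each $\xi_j$ is an integral cycle point. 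If $-\xi_j\in\Gamma$ for all $j$, then $\lambda=-\xi_j$ would contribute $|\widehat\mu(0)|^2=1$ to $Q_\Lambda(\xi_j)$, contradicting $Q_\Lambda(\xi_j)<1$. Hence some $-\xi_j\notin\Gamma$.

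\textbf{(ii)$\Rightarrow$(iii).} Let $\xi$ be an integral cycle point with $-\xi\notin\Gamma$. We show $\widehat\mu(\xi+\gamma)=0$ for all $\gamma\in\Gamma$. Expand $\gamma=\ell_1+N\lambda$ and follow the cycle: the digits of $\xi+\gamma$ eventually deviate from the cycle's digits, and at the first deviation the Hadamard/mask orthogonality gives a factor $M_{\mathcal D}=0$. This is the classical argument that points off the cycle are zeros. We also check $\xi\in\mathcal O$; this follows since $|M_{\mathcal B_s}(\xi)|=1$ for integer $\xi$.

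The main obstacle is deriving the harmonic identity in the product-form setting, where $\widetilde{\mathcal L}\not\subset\mathbb Z$. There the summation over $\mathcal L_2/N$ must be combined with the two-level product $M_{\mathcal D}$, using condition (ii) of Definition \ref{defi1}. The finiteness argument for the minimizing orbit in (i)$\Rightarrow$(ii) relies on this identity.
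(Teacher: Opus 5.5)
Your skeleton (a minimum principle for a Ruelle-harmonic function, extraction of cycles, and a digit-deviation argument) matches the paper's. The central step is missing, however, as you yourself note, and the identity you aim for is not available for $Q_\La$.

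\begin{itemize}
\item With the maps $\phi_\ell$, $\ell\in\mathcal L$, nothing of the form $Q_\La=\sum_\ell W\,Q_\La\circ\phi_\ell$ follows, because $\La=\widetilde{\mathcal L}\oplus N\La$, not $\mathcal L\oplus N\La$.
\item With the maps $\xi\mapsto(\xi+\tilde\ell)/N$, $\tilde\ell\in\widetilde{\mathcal L}$, the usual factorization of $\widehat\mu_{N,\D}(\xi+\tilde\ell+N\lambda)$ breaks. Since $\La\not\subset\Z$, $M_{\D/N}$ need not be $N\La$-periodic.
\item Passing to $\Gamma=\mathcal L\oplus N\Gamma\subset\Z$ does give $Q_\Gamma(\xi)=\sum_{\ell\in\mathcal L}|M_{\D/N}(\xi+\ell)|^2\,Q_\Gamma(\phi_\ell(\xi))$. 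But these weights do not sum to $1$, because $(N,\D,\mathcal L)$ is not a Hadamard triple. At $\xi=0$ they sum to $\sum_{\ell}|M_{\A/N}(\ell)|^2$, which equals $2$ in the first example of Section~\ref{sec.ex}.
\end{itemize}

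The idea you are missing is the paper's normalization. By Lemma~\ref{cprop2} and Theorem~\ref{fomula1}, $\La$ is orthonormal (resp.\ a spectrum) iff $Q_\Gamma\le$ (resp.\ $=$) $\frac1n\sum_s|M_{\B_s}|^2$. So the function to study is the ratio of these two. Dividing the common factor $F$ out of the mask polynomials $P_{\B_s}$ makes this ratio a well-defined entire function $\widetilde Q_\Gamma$. An--Lai's identity (Lemma~\ref{clem1}) then yields $\widetilde Q_\Gamma=\sum_\ell\mathbf M_\ell\,\widetilde Q_\Gamma\circ\phi_\ell$ with $\sum_\ell\mathbf M_\ell\equiv1$ (Lemma~\ref{lem1}), and $\La$ is a spectrum iff $\widetilde Q_\Gamma\equiv1$ (Theorem~\ref{fomula2}). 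Your minimum principle then works for $\widetilde Q_\Gamma$ on $[-r,r]$. The relevant finite set is the level set $\{\widetilde Q_\Gamma=\min\}$, not the zero set of $1-Q$, and it misses $-\Gamma$ since $\widetilde Q_\Gamma(-\gamma)=1$.

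The integrality step also depends on these particular weights. The condition $|M_{\D}(\eta)|=1$ only gives $\gcd(\D)\,\eta\in\Z$, and the normalization is $\gcd(\D_0)=1$, not $\gcd(\D)=1$. In the second example of Section~\ref{sec.ex}, $\D=\{0,30,48,110\}$ and $M_{\D}(\frac12)=1$. The paper instead uses $\mathbf M_{\ell_k}(\xi_k)=1$: the factor $\sum_j|M_{\B_j}(\phi_{\ell_k}(\xi_k))|^2$ in $\mathbf M_\ell$, together with an equality case of Cauchy--Schwarz, forces $M_{\D_0}(\xi_k)=1$, hence $\xi_k\in\Z$.

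Your (iii)$\Rightarrow$(i) is not correct as argued. Membership $\xi\in\mathcal W(\widehat\mu_{N,\D})$ controls only $\widehat\mu_{N,\D}(\xi+\gamma)$ for $\gamma\in\Gamma$. The terms with $\ell_2\neq0$ do vanish when $\xi\in\Z$, since then $M_{\B_s}(\xi+\ell_2/N)=M_{\B_s}(\ell_2/N)=0$. But $\mathcal W$ is not assumed to consist of integers, and vanishing is not needed anyway. Write $Q_\La(\xi)=\sum_{\ell_2\in\mathcal L_2}Q_\Gamma(\xi+\ell_2/N)$.
\begin{itemize}
\item The $\ell_2=0$ term is $Q_\Gamma(\xi)=0$, which is strictly below $\frac1n\sum_s|M_{\B_s}(\xi)|^2$ because $\xi\in\mathcal O$. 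This is the only place $\mathcal O$ is used.
\item By orthonormality (Lemma~\ref{prop.rep1}) and Lemma~\ref{cprop2}, each other term is at most $\frac1n\sum_s|M_{\B_s}(\xi+\ell_2/N)|^2$.
\item Summing over all $\ell_2$, these bounds total $1$, since each $(N,\B_s,\mathcal L_2)$ is Hadamard.
\end{itemize}
Hence $Q_\La(\xi)<1$.

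In (ii)$\Rightarrow$(iii) your digit-deviation idea is right, but the classical conclusion does not transfer verbatim. Since $(N,\D,\mathcal L)$ is not Hadamard, the first deviation only gives $M_{(\A\oplus\B_s)/N^k}(\xi+\gamma)=0$ for every $s$ (Proposition~\ref{prop-orth}). You then need Lemma~\ref{lem4.1} to deduce that $M_{\D/N^k}$ or $M_{\D/N^{k+1}}$ vanishes at $\xi+\gamma$.
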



Let $\mu_{N,\mathcal{A}\oplus\mathcal{B}_{s}}$ be the self-similar meausure generated by the IFS $\left\{\phi_{d}(x)=N^{-1}(x+d): d\in \mathcal{A}\oplus\mathcal{B}_{s}\right\} $, which, by Definition \ref{defi1} {\rm(ii)}, arises from the Hadamard triple  $(N, \A\oplus\B_s, {\mathcal L})$. We next show that, under a mild condition, the equivalence of the spectral pairs  $(\mu_{N,\mathcal{D}},\Lambda)$ and $\left(\mu_{N,\mathcal{A}\oplus\mathcal{B}_{s}},\Gamma\right)$.

\begin{theo}\label{th3}
Let $(N, \D, {\mathcal L})$ be a product-form Hadamard triple and $\Lambda=\frac{\mathcal{L}_2}{N}\oplus \Gamma$ with  $\Gamma=\mathcal{L}\oplus N\Gamma\subset\Z$. Then the following two statements hold:
\begin{enumerate}
\item[(i)]  If there exists $s\in\{0,\ldots,n-1\}$ such that
$\left(\mu_{N,\mathcal{A}\oplus\mathcal{B}_{s}},\Gamma\right)$ is a spectral pair, 
so is $\left(\mu_{N,\mathcal{D}},\Lambda\right)$.
\item[(ii)]  If $(\mu_{N,\mathcal{D}},\Lambda)$ is a spectral pair,  so is $\left(\mu_{N,\mathcal{A}\oplus\mathcal{B}_{s}},\Gamma\right)$ for every $s\in\{0,\ldots,n-1\}$ with
$\gcd(\mathcal{A}\oplus\mathcal{B}_{s})=1$.
\end{enumerate}
\end{theo}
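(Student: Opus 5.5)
The plan is to reduce both directions to Theorem \ref{th2} by comparing the integral-cycle-point criterion for $(\mu_{N,\mathcal{D}},\Lambda)$ with the classical Łaba--Wang / Dutkay--Jorgensen criterion for the Hadamard triple $(N,\mathcal{A}\oplus\mathcal{B}_s,\mathcal{L})$.

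First we record the structure. Since $\Gamma=\mathcal{L}\oplus N\Gamma=\mathcal{L}_1\oplus\mathcal{L}_2\oplus N\Gamma$, we have $\Lambda=\frac{\mathcal{L}_2}{N}\oplus\Gamma=\widetilde{\mathcal L}\oplus N\Lambda$, because $N\Lambda=\mathcal{L}_2\oplus N\Gamma$. Hence $\Lambda$ is a self-replicating translation set with $\Lambda\subset\frac{\mathbb{Z}}{N}$, and its integer part in \eqref{eq.Lambda} is $\mathcal{L}_1\oplus N\Lambda=\mathcal{L}_1\oplus\mathcal{L}_2\oplus N\Gamma=\Gamma$. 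So Theorem \ref{th2} applies with exactly this $\Gamma$: $\Lambda$ is a spectrum of $\mu_{N,\mathcal{D}}$ if and only if every integral cycle point $\xi$ for $\mathcal{L}$ satisfies $-\xi\in\Gamma$.

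For the Hadamard triple $(N,\mathcal{A}\oplus\mathcal{B}_s,\mathcal{L})$ with self-replicating set $\Gamma=\mathcal{L}\oplus N\Gamma\subset\mathbb{Z}$, we invoke the known one-dimensional theory (Łaba--Wang, Dutkay--Jorgensen, Dutkay--Lai). The key claim is the following: if $\gcd(\mathcal{A}\oplus\mathcal{B}_s)=1$, then $\Gamma$ is a spectrum of $\mu_{N,\mathcal{A}\oplus\mathcal{B}_s}$ if and only if $-C\subset\Gamma$, where $C$ is the set of integral cycle points for $\mathcal{L}$. I would prove this by rerunning the argument of Theorem \ref{th2} in the simpler setting $r=0$, i.e. with $n=1$ and a single $\mathcal{B}$. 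Alternatively, one can use the standard fact that $\Gamma$ is complete exactly when the set $\Gamma$ contains the minimal invariant set generated by $-C$.

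\textbf{Part (ii).} Assume $(\mu_{N,\mathcal{D}},\Lambda)$ is spectral. By Theorem \ref{th2}, $-C\subset\Gamma$. If $\gcd(\mathcal{A}\oplus\mathcal{B}_s)=1$, the criterion above gives that $\Gamma$ is a spectrum of $\mu_{N,\mathcal{A}\oplus\mathcal{B}_s}$.

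\textbf{Part (i).} Here no gcd hypothesis is given, so we must show that spectrality of $\Gamma$ for $\mu_{N,\mathcal{A}\oplus\mathcal{B}_s}$ forces $-C\subset\Gamma$ without it. Take an integral cycle point $\xi=\phi_{\ell_m}\circ\cdots\circ\phi_{\ell_1}(\xi)$ and suppose $-\xi\notin\Gamma$. The Hadamard property gives $|M_{\mathcal{A}\oplus\mathcal{B}_s}(\phi_\ell(\eta))|=1$ only on special points. The standard argument runs as follows.
\begin{itemize}
\item The function $Q(\eta)=\sum_{\gamma\in\Gamma}|\widehat{\mu}_{N,\mathcal{A}\oplus\mathcal{B}_s}(\eta+\gamma)|^2$ is $\equiv1$ by spectrality.
\item $Q$ satisfies $\mathcal{R}Q=Q$ for the Ruelle operator \eqref{R-1} attached to $\mathcal{A}\oplus\mathcal{B}_s$.
\item Along the cycle, the terms of $Q(-\xi)$ coming from $\Gamma$ miss the point $\gamma=\xi$.
\end{itemize}
The main step is to show that the orbit of $-\xi$ under the cycle shifts carries full mass $|\widehat{\mu}(0)|=1$ only through $\gamma$ with $\gamma+(-\xi)=0$, which is impossible when $-\xi\notin\Gamma$. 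One then checks that $\widehat{\mu}_{N,\mathcal{A}\oplus\mathcal{B}_s}$ vanishes at each point $-\xi+\gamma$ with $\gamma\neq\xi$ belonging to the cycle tree, and hence $Q(-\xi)<1$, a contradiction. Should this fail for lack of the gcd condition, we would instead rescale: replace $\mathcal{A}\oplus\mathcal{B}_s$ by its quotient by $g=\gcd$. Then $\mu_{N,\mathcal{A}\oplus\mathcal{B}_s}$ is the pushforward of a gcd-one measure under $x\mapsto gx$, so $\Gamma$ being a spectrum forces $g\Gamma'=\Gamma$-type constraints, from which one would still deduce $-C\subset\Gamma$. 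Finally, Theorem \ref{th2} yields that $\Lambda$ is a spectrum.

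The main obstacle is part (i) without the gcd assumption, namely extracting $-C\subset\Gamma$ from spectrality of $\Gamma$ when $\mathcal{A}\oplus\mathcal{B}_s$ may have nontrivial gcd.
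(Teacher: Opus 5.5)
Your reduction is the same as the paper's. You correctly check that $\Lambda=\widetilde{\mathcal L}\oplus N\Lambda$ with integer part exactly $\Gamma$, so Theorem \ref{th2} turns both parts into the question of whether $-\mathcal C\subset\Gamma$. Your part (ii) is fine. The paper gets $\Gamma=\Gamma_0$ from Lemma \ref{cth5} and then applies Theorem \ref{cth4}; your variant, $\Gamma_0\subset\Gamma$ by minimality plus orthogonality of $\Gamma$ in $L^2(\mu_{N,\A\oplus\B_s})$, also works.

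The gap is in part (i): the one step that carries it is never proved, and the sketch you give is incorrect as written. You evaluate $Q$ at $-\xi$ and look for the missing term $\gamma=\xi$. But the hypothesis is $-\xi\notin\Gamma$, so the relevant point is $\xi$, not $-\xi$. At $-\xi$ the terms $\widehat\mu_{N,\A\oplus\B_s}(-\xi+\gamma)$ need not vanish, because $\xi$ (unlike $-\xi$) generally has no $N$-adic expansion with digits in $\mathcal L$. For example, with $N=4$, digits $\{0,2\}$, $\mathcal L=\{0,3\}$, $\xi=1$ and $\gamma=3$, one gets $\widehat\mu(2)\neq 0$. Moreover, vanishing for $\gamma$ ``in the cycle tree'' is not enough: you need it for every $\gamma\in\Gamma$. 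The identity $\mathcal R Q=Q$ plays no role.

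The missing idea is Proposition \ref{prop-orth}.
\begin{itemize}
\item[(a)] Because $\xi$ lies on a cycle, $-\xi=\sum_{j\ge1}N^{j-1}\ell_j$ is a periodic $N$-adic expansion with digits in $\mathcal L$.
\item[(b)] Because $\Gamma=\mathcal L\oplus N\Gamma$, every $\gamma\in\Gamma$ can be written, for every $k$, as $\gamma=\sum_{j=1}^{k}N^{j-1}\ell_j'+N^k\gamma_k$ with $\ell_j'\in\mathcal L$ and $\gamma_k\in\Gamma$.
\item[(c)] Let $k$ be the first index with $\gamma\not\equiv-\xi\pmod{N^k}$. Since $\mathcal L$ is distinct mod $N$, we get $\ell_j=\ell_j'$ for $j<k$, $\ell_k\neq\ell_k'$, and $(\xi+\gamma)/N^{k-1}\in\ell_k'-\ell_k+N\Z$.
\item[(d)] By $1$-periodicity, the $k$-th factor of the infinite product equals $M_{\A\oplus\B_s}\bigl(\frac{\ell_k'-\ell_k}{N}\bigr)$. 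This is $0$ by the Hadamard property of $(N,\A\oplus\B_s,\mathcal L)$.
\end{itemize}
Hence $\widehat\mu_{N,\A\oplus\B_s}(\xi+\gamma)=0$ for all $\gamma\in\Gamma$. So $Q(\xi)=0$, and $e(-\xi x)$ is a unit vector orthogonal to $E(\Gamma)$, contradicting completeness.

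No gcd condition enters anywhere. The ``main obstacle'' you identify is therefore illusory, and the rescaling fallback (which you leave unspecified anyway) is unnecessary. The hypothesis $\gcd(\A\oplus\B_s)=1$ is needed only in (ii), where the completeness of $\Gamma_0$ in Theorem \ref{cth4} depends on it; compare the non-integral cycle point $8/3$ in the paper's second example.
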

  In particular, taking  $\Gamma=\Lambda(N,\mathcal{L})$ in  \eqref{eq.Lambda},  we obtain  
  $\Lambda=\frac{\mathcal{L}_2}{N}\oplus \Lambda(N,\mathcal{L})=\Lambda(N, \widetilde{\mathcal L}).$  Therefore,  Theorem \ref{th3} immediately answers Problem \ref{prom2} for the special set $\Lambda(N, \widetilde{\mathcal L})$.
  
 \begin{coro}\label{th1}
Let $(N, \D, {\mathcal L})$ be a product-form Hadamard triple. If there exists $s\in\{0,\ldots,n-1\}$ such that
$\left(\mu_{N, \A\oplus\B_s},  \Lambda(N,\mathcal{L})\right)$ is a spectral pair, then so is 
 $\left(\mu_{N, \D}, \Lambda(N, \widetilde{\mathcal L})\right)$. Conversely, if $\left(\mu_{N, \D}, \Lambda(N, \widetilde{\mathcal L})\right)$  is a spectral pair, then so is  $\left(\mu_{N, \A\oplus\B_s}, \Lambda(N,\mathcal{L})\right)$ for every $s\in\{0,\ldots,n-1\}$ with
$\gcd(\mathcal{A}\oplus\mathcal{B}_{s})=1$.  
\end{coro}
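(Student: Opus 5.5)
The statement we need to prove is Corollary \ref{th1}, and the plan is to obtain it as the special case $\Gamma=\Lambda(N,\mathcal{L})$ of Theorem \ref{th3}. Two facts have to be checked before Theorem \ref{th3} applies.

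First, we verify the hypothesis on $\Gamma$, namely that $\Gamma=\Lambda(N,\mathcal{L})$ satisfies $\Gamma=\mathcal{L}\oplus N\Gamma\subset\Z$.
\begin{itemize}
\item Since $\mathcal{L}=\mathcal{L}_1\oplus\mathcal{L}_2\subset\Z$, every finite sum $\sum_{j=0}^k N^j\ell_j$ is an integer, so $\Gamma\subset\Z$.
\item Because $0\in\mathcal{L}$, we can split off the $j=0$ term, which gives $\Lambda(N,\mathcal{L})=\mathcal{L}+N\Lambda(N,\mathcal{L})$.
\item The sum is direct. The Hadamard property of $(N,\mathcal{L}_1\oplus\mathcal{L}_2)$ paired with $\mathcal{A}\oplus\mathcal{B}_s$ forces the elements of $\mathcal{L}$ to be pairwise incongruent modulo $N$. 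Hence $\ell+N\gamma=\ell'+N\gamma'$ implies $\ell\equiv\ell' \pmod N$, so $\ell=\ell'$ and then $\gamma=\gamma'$.
\end{itemize}

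Second, we identify $\Lambda$ with $\Lambda(N,\widetilde{\mathcal L})$. Put $\Lambda=\frac{\mathcal{L}_2}{N}\oplus\Lambda(N,\mathcal{L})$. Expanding,
$$\Lambda(N,\widetilde{\mathcal L})=\bigcup_p\sum_{j<p}N^j\Big(\mathcal{L}_1+\tfrac{\mathcal{L}_2}{N}\Big)=\frac{\mathcal{L}_2}{N}+\bigcup_p\Big(\mathcal{L}_1+\sum_{j=1}^{p-1}N^j\big(\mathcal{L}_1+\tfrac{\mathcal{L}_2}{N}\big)\Big).$$
Regrouping the $N^{j}\frac{\mathcal{L}_2}{N}$ term with the $N^{j-1}\mathcal{L}_1$ term turns the inner set into $\bigcup_p\big(\sum_{j<p-1}N^j\mathcal{L}\big)+N^{p-1}\mathcal{L}_1$. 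The trailing $\mathcal{L}_1$-term is absorbed because $0\in\mathcal{L}_2$ and the union runs over all $p$: each finite expansion in $\mathcal{L}$ appears with trailing digit $0$, and conversely $N^{p-1}\ell_1$ is an element $N^{p-1}(\ell_1+0)\in N^{p-1}\mathcal{L}$. This gives $\Lambda(N,\widetilde{\mathcal L})=\frac{\mathcal{L}_2}{N}+\Lambda(N,\mathcal{L})$.

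Directness of this last sum holds because elements of $\frac{\mathcal{L}_2}{N}$ are distinct modulo $\Z$. Indeed, $\mathcal{L}_2$ consists of distinct residues modulo $N$, since $(N,\mathcal{B}_s,\mathcal{L}_2)$ is Hadamard, and $\Lambda(N,\mathcal{L})\subset\Z$.

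Finally, we apply Theorem \ref{th3} with $\Gamma=\Lambda(N,\mathcal{L})$. Part (i) of Theorem \ref{th3} gives the forward implication of the corollary, and part (ii) gives the converse for every $s$ with $\gcd(\mathcal{A}\oplus\mathcal{B}_s)=1$.

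The only step requiring care is the regrouping identity $\Lambda(N,\widetilde{\mathcal L})=\frac{\mathcal{L}_2}{N}\oplus\Lambda(N,\mathcal{L})$, specifically the bookkeeping of the truncation index $p$ and the use of $0\in\mathcal{L}_1\cap\mathcal{L}_2$. Everything else is immediate from Theorem \ref{th3}.
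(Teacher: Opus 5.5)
Your proposal is correct and follows the paper's route exactly: the paper also obtains the corollary by taking $\Gamma=\Lambda(N,\mathcal{L})$ in Theorem~\ref{th3}, noting that $\Lambda(N,\widetilde{\mathcal L})=\frac{\mathcal{L}_2}{N}\oplus\Lambda(N,\mathcal{L})$. You additionally verify that $\Lambda(N,\mathcal{L})=\mathcal{L}\oplus N\Lambda(N,\mathcal{L})\subset\mathbb{Z}$ and prove the regrouping identity, using $0\in\mathcal{L}_1\cap\mathcal{L}_2$ and the distinctness of residues modulo $N$; these details are ones the paper simply asserts.
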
  

Theorem \ref{th2} gives a criterion to determine whether $\La(N, \widetilde{\mathcal L})$ is a spectrum of $\mu_{N, \D}$. Moreover, according to Lemma \ref{lem.ha} {\rm(i)},  there are two sets $\widehat{\mathcal L}_1, \widehat{\mathcal L}_2$ such that  $(N, \D, \widehat{\mathcal L}_1\oplus\widehat{\mathcal L}_2)$ still forms a product-form Hadamard triple and 0 is the unique integral cycle point for $\widehat{\mathcal L}_1\oplus\widehat{\mathcal L}_2$. This allows us to construct a concrete spectrum $\La(N, \widetilde{\mathcal L})$  for the measure $\mu_{N, \D}$. 

\begin{theo}\label{th5}
Let $(N, \D, {\mathcal L})$ be a product-form Hadamard triple. Then there exists $\widetilde{\mathcal L}$ such that  $\La(N, \widetilde{\mathcal L})$ is a spectrum of the self-similar measure $\mu_{N, \D}$.
\end{theo}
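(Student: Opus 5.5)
The plan is to derive Theorem \ref{th5} from Theorem \ref{th2}, applied to the special self-replicating set $\Lambda(N,\widetilde{\mathcal L})$, after first replacing $\mathcal{L}_1,\mathcal{L}_2$ by better-behaved sets. Lemma \ref{lem.ha} {\rm(i)} supplies sets $\widehat{\mathcal L}_1,\widehat{\mathcal L}_2$ with three properties: $0\in\widehat{\mathcal L}_1\cap\widehat{\mathcal L}_2$; $(N,\D,\widehat{\mathcal L}_1\oplus\widehat{\mathcal L}_2)$ is still a product-form Hadamard triple with the same $\A,\B_s$; and $0$ is the only integral cycle point for $\widehat{\mathcal L}:=\widehat{\mathcal L}_1\oplus\widehat{\mathcal L}_2$. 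I expect this lemma to come from the standard trick of shifting elements of $\mathcal L_i$ by multiples of $N$, which preserves the Hadamard property, chosen so that the digits lie in a small window such as $(-N+1,N-1)$ or $\{0,\dots,N-1\}$ with only $0\equiv 0$. With the new sets we put $\widetilde{\mathcal L}=\widehat{\mathcal L}_1\oplus\frac{\widehat{\mathcal L}_2}{N}$ and work with all the notation of \eqref{eq6}, now built from the hatted sets.

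Next, take $\Gamma=\Lambda(N,\widehat{\mathcal L})\subset\Z$. By the remark after Theorem \ref{th3}, $\Lambda(N,\widetilde{\mathcal L})=\frac{\widehat{\mathcal L}_2}{N}\oplus\Gamma$, and $\Gamma=\widehat{\mathcal L}_1\oplus N\Lambda(N,\widetilde{\mathcal L})$. So $\Lambda(N,\widetilde{\mathcal L})$ is a self-replicating set of the form \eqref{eq.Lambda} contained in $\frac{\Z}{N}$, and Theorem \ref{th2} applies to it.

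By Theorem \ref{th2}, (i)$\Leftrightarrow$(ii), it is enough to show there is no integral cycle point $\xi$ for $\widehat{\mathcal L}$ with $-\xi\notin\Gamma$. The only integral cycle point is $\xi=0$, and $-0=0\in\Gamma$ because $0\in\widehat{\mathcal L}$. So (ii) fails, hence (i) fails, and $\Lambda(N,\widetilde{\mathcal L})$ is a spectrum of $\mu_{N,\D}$.

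The main obstacle is Lemma \ref{lem.ha} {\rm(i)} itself: one must modify $\mathcal L_1$ and $\mathcal L_2$ modulo $N$ so that the product-form Hadamard conditions (i), (ii) of Definition \ref{defi1} survive and no nonzero integral cycles remain. Everything else is bookkeeping. If the lemma were not already available, I would argue as follows. Hadamard-ness of $(N,\A,\mathcal L_1)$, $(N,\B_s,\mathcal L_2)$ and $(N,\A\oplus\B_s,\mathcal L_1\oplus\mathcal L_2)$ depends only on residues mod $N$, provided the sum $\mathcal L_1\oplus\mathcal L_2$ stays direct, and directness is automatic because the $\#\mathcal L_1\#\mathcal L_2$ sums are distinct mod $N$ by the Hadamard property. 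So I would choose representatives of $\mathcal L_1$ in $\{0,\dots,N-1\}$ and of $\mathcal L_2$ in $\{-(N-1),\dots,0\}$, or some similar window, and check that integral cycle points lie in $[-1,1]$ and in fact only $0$ occurs.
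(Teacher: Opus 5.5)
Your reduction is the same as the paper's: pass to residue-equivalent sets $\widehat{\mathcal L}_1,\widehat{\mathcal L}_2$, write $\Lambda(N,\widetilde{\mathcal L})=\frac{\widehat{\mathcal L}_2}{N}\oplus\Lambda(N,\widehat{\mathcal L})$, and use (i)$\Leftrightarrow$(ii) of Theorem \ref{th2}. The gap is the step you flag yourself. Lemma \ref{lem.ha}(i) only says that the Hadamard property depends on residues mod $N$. It does not produce representatives whose only integral cycle point is $0$, and supplying those is the real content of this theorem's proof.

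Your fallback choice, $\widehat{\mathcal L}_1\subset\{0,\dots,N-1\}$ and $\widehat{\mathcal L}_2\subset\{1-N,\dots,0\}$, fails. It only gives $\widehat{\mathcal L}\subset[1-N,N-1]$, hence $T(N,\widehat{\mathcal L})\subset[-1,1]$, and $\pm1$ can genuinely be integral cycle points. If $\mathcal L_1$ contains the residue $N-1$, then $N-1=(N-1)+0\in\widehat{\mathcal L}$ and $1=\phi_{N-1}(1)$. At the same time $-1\notin\Lambda(N,\widehat{\mathcal L})$: a finite expansion of $-1$ must begin with the digit $N-1$, which leaves $-1$ again. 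So by Theorem \ref{th2} your $\Lambda(N,\widetilde{\mathcal L})$ is not a spectrum.

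Here is a concrete case. Take $N=4$, $\A=\{0,2\}$, $\B_0=\B_1=\{0,1\}$, $\mathcal L_1=\{0,3\}$, $\mathcal L_2=\{0,2\}$. This is a product-form Hadamard triple, and $\mu_{4,\D}$ is normalized Lebesgue measure on $[0,2]$. Your recipe gives $\widehat{\mathcal L}=\{0,3,-2,1\}$ and a set $\Lambda\subset\frac12\Z$ that contains $0$ but not $-1$, so it is not a spectrum.

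The missing idea is that the representatives must depend on which residues actually occur. The sum of two blocks of $N$ consecutive integers is a block of $2N-1$ integers, which cannot fit inside $[2-N,N-2]$. So the extreme sums have to be excluded using the direct-sum structure.

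The paper does this as follows. Since $N\ge\#\mathcal L_1\cdot\#\mathcal L_2\ge4$, choose $\widehat{\mathcal L}_1\subset\{-1,0,\dots,N-2\}$ and $\widehat{\mathcal L}_2\subset\{2-N,\dots,0,1\}$. Then $\pm(N-1)$ can only arise from the pairs $(-1,2-N)$ and $(N-2,1)$. Suppose, say, $(-1,2-N)$ occurs. Since $0\in\widehat{\mathcal L}_1\cap\widehat{\mathcal L}_2$ and all sums are distinct mod $N$, the coincidences $(N-2)+(2-N)=0+0$ and $(N-3)+(2-N)=(-1)+0$ force $N-2,N-3\notin\widehat{\mathcal L}_1$. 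One may therefore replace $2-N$ by $2$, obtaining $\widehat{\mathcal L}_1\subset\{-1,\dots,N-4\}$ and $\widehat{\mathcal L}_2\subset\{3-N,\dots,2\}$.

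In every case $\widehat{\mathcal L}\subset[2-N,N-2]$, so $T(N,\widehat{\mathcal L})\subset[\frac{2-N}{N-1},\frac{N-2}{N-1}]$ meets $\Z$ only in $0$. The inclusion $\mathcal C\subset T\cap\Z$ from Proposition \ref{lem.cy}, applied to $\widehat{\mathcal L}$, then shows $0$ is the only integral cycle point. With this step in place, the rest of your argument goes through as written.
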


It appears to be a strong link between product-form Hadamard triples,
self-similar tile digit sets and the dual spectral set conjecture. Relying on a 
conjecture proposed by Coven and Meyerowitz \cite{CM1999}, if $\A$ is a self-similar tile digit set, then $\A$ is a part of  a product-form Hadamard triple. A
finite set $\A\subset\Z$ is called a {\it complementing set}$\pmod{M}$ if there exists a
$\B$ such that $\A\oplus\B$ is a complete residue system$\pmod{M}$, where $M$ is an integer. The set $\A$ is called a \textit{spectral set} if the discrete measure $\delta_\A:=\frac{1}{\#\A}\sum_{a\in\A}\delta_a$ is spectral. Finally, we solve Problem \ref{prom.d.s} for a self-replicating translation set. 
\begin{theo}\label{thm.dua.}
  Let $\D\subset\Z$ be a finite digit set with cardinality $N$ and $\Lambda\subset\Z$ be a  self-replicating translation set with respect to $(N,\D)$.
\begin{enumerate}
\item[(i)] If $\Lambda$ is a spectrum, then it is a tiling set.
\item[(ii)] Suppose that any complementing set is a spectral set. If $\Lambda$ is a tiling set, then it is a spectrum. 
\end{enumerate}
\end{theo}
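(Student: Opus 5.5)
The plan is to pass in both directions through the digit set $\D$, using the self-replicating identity $\Lambda=\D\oplus N\Lambda$. Iterating it gives
$$\Lambda=\D\oplus N\D\oplus\cdots\oplus N^{k-1}\D\oplus N^k\Lambda\qquad\text{for every }k\ge1.$$
Hence both the spectral property and the tiling property of $\Lambda$ should be encoded by the finite sets $\D_k:=\D\oplus N\D\oplus\cdots\oplus N^{k-1}\D$ and their behaviour modulo $N^k$. After translating, we may assume $0\in\D$. After dividing out the common factor (which rescales $\Lambda$), we may assume $\gcd(\D-\D)=1$. The common intermediate target for both parts is the statement that $\D$ is a tile digit set with respect to $N$, i.e. $T(N,\D)$ has positive Lebesgue measure. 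By Bandt \cite{B91} and Lau--Rao \cite{LR03}, $T(N,\D)$ is then a self-similar tile whose (unique, periodic) self-replicating tiling set is exactly the $\Lambda$ satisfying $\Lambda=\D\oplus N\Lambda$.

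For (i), suppose $(\Omega,\Lambda)$ is a spectral pair for some measure or set $\Omega$.
\begin{enumerate}
\item Since $\Lambda\subset\Z$, orthogonality says that $\widehat{\Omega}$ vanishes on $(\Lambda-\Lambda)\setminus\{0\}$, and in particular on $\D_k-\D_k$.
\item Completeness together with the density $\#\D_k/N^k=1$ of $\Lambda$ (a Landau or Beurling density argument) should force $\D_k$ to be a complete residue system modulo $N^k$ up to the periodic part. Equivalently, the mask polynomial $\D(x)=\sum_{d\in\D}x^d$ must satisfy the Kenyon / Lagarias--Wang cyclotomic criterion: for every cyclotomic $\Phi_m$ with $m\mid N^k$, $m\nmid N^{k-1}$, some $\Phi_m$ divides $\D(x^{N^j})$.
\item That criterion is exactly the characterisation of tile digit sets. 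So $\D$ is a tile digit set, and $\Lambda$ tiles with $T(N,\D)$.
\end{enumerate}
Step 2 is where I expect the main difficulty. Without knowing $\Omega$, one has to get from ``$\Lambda$ is a spectrum of something'' to arithmetic information about $\D$. My first attempt would be to show that $\Lambda$ has uniform density $1$ and that $\Lambda$ is a spectrum only if $\Lambda$ is \emph{maximal} orthogonal in the sense that $\Lambda+N^k\Z$ behaves like $\D_k+N^k\Z$. From this I would extract that $\D_k$ hits every residue class modulo $N^k$ exactly once.

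For (ii), suppose $\Lambda$ is a tiling set.
\begin{enumerate}
\item Again by \cite{LR03}, $\D$ is a tile digit set.
\item By Li--Rao \cite{LR2025}, $\D$ is (after a standard reduction of the power of $N$) a skew-product-form digit set $\D=\bigcup_s(a_s+N^r\B_s)$ with each $\A\oplus\B_s$ a complete residue system modulo $N$.
\item Now use the hypothesis that complementing sets are spectral. Since $\A$ and each $\B_s$ are complementing modulo $N$, they are spectral. One then has to choose spectra $\mathcal L_1,\mathcal L_2$ making $(N,\A,\mathcal L_1)$, $(N,\B_s,\mathcal L_2)$ and $(N,\A\oplus\B_s,\mathcal L_1\oplus\mathcal L_2)$ Hadamard triples. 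For the last, $\mathcal L_1\oplus\mathcal L_2$ can be taken to be a complete residue system modulo $N$, since $\A\oplus\B_s$ is one. Thus $(N,\D,\mathcal L)$ is a product-form Hadamard triple as in Definition \ref{defi1}.
\end{enumerate}

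To conclude, we need a spectrum built from $\Lambda$ itself, not merely a spectrum of $\mu_{N,\D}$. The natural route is to dualise: exhibit a measure (for instance a self-similar measure $\mu_{N,\mathcal L'}$ with a suitable $\mathcal L'$ forming a Hadamard triple with $\D$, or Lebesgue measure on a tile) for which $\Lambda$ is a spectrum. Integrality of $\Lambda$ makes the Ruelle-operator criterion available. I would then apply Theorem \ref{th2}, with the roles of digits and spectrum swapped, to check that there is no integral cycle point $\xi$ with $-\xi\notin\Lambda$. Self-replication of $\Lambda$ should show that all integral cycle points land in $-\Lambda$, as in the Dutkay--Lai construction. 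Choosing the dual data so that Theorem \ref{th2}(ii) fails is the delicate point of this part, and Lemma \ref{lem.ha}(i) and Theorem \ref{th5} are what I would lean on for it.
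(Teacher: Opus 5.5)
Your skeleton (reduce both parts to ``$\D$ is a tile digit set'', then use \cite{LR03} and \cite{LR2025}) is the paper's, but both parts have genuine gaps.

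\textbf{Part (i).} The step you flag aims at a false target. A self-replicating $\La$ need not have density $1$, and your $\D_k$ need not be a complete residue system modulo $N^k$. Take $N=4$, $\D=\{0,1,8,9\}$ and $\La=\{0,1\}+4\Z$. Then $\D+4\La=\La$, $\La$ is a spectrum of $[0,\frac14]\cup[\frac12,\frac34]$, and $\La$ tiles with $T(4,\D)=[0,1]\cup[2,3]$. Yet $D(\La)=\frac12$, and $\D$ is not even a complete residue system modulo $4$.

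What is actually needed is the Lagarias--Wang criterion \cite{LW1996-1}: $T(N,\D)$ is a tile iff $\#\mathbf D_m=N^m$ for all $m$, where $\mathbf D_m=\D+N\D+\cdots+N^{m-1}\D$. The hypothesis is only $\La=\D+N\La$. If you read it as a direct sum $\D\oplus N\La$, then $\#\mathbf D_m=N^m$ follows at once, so writing $\oplus$ presupposes exactly what has to be proved.

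The paper gets distinct expansions from positive density. A spectrum of a set in $\R$ is periodic \cite{IK2013}, so $D(\La)>0$. On the other hand, $\La=\mathbf D_{km}+N^{km}\La\subset\Z$ gives $\overline D(\La)\le \#\mathbf D_{km}/N^{km}\le (N^m-1)^k/N^{km}\to 0$ if some $\#\mathbf D_m<N^m$, a contradiction. Then $T(N,\D)$ is a self-similar tile, and \cite{LR03} identifies $\La$ as its tiling set. Your density instinct is the right tool, but it must be aimed at distinctness of expansions, not at density $1$ or residue completeness.

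\textbf{Part (ii).} Your first step does not follow from \cite{LR03}, which starts from a self-similar tile. You need periodicity of tilings of $\R$ \cite{LW1996} plus the same counting argument to know that $\D$ is a tile digit set.

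More seriously, your endgame never produces the required object: a \emph{set} $\Omega$ with $(\Omega,\La)$ a spectral pair. A product-form Hadamard triple for $\D$ yields, via Theorem \ref{th2}, spectra of $\mu_{N,\D}$ of the form $\frac{\mathcal L_2}{N}\oplus\Gamma$. These are self-replicating for $\widetilde{\mathcal L}$ and are not the given $\La$. ``Swapping roles'' would need $(N,\mathcal L',\D)$ to be a Hadamard triple. By Lemma \ref{lem.ha}(ii) and $\#\D=N$, that forces $\D$ to be a complete residue system modulo $N$, which fails in the example above. Nor does the hypothesis supply a common spectrum $\mathcal L_2$ for all $\B_s$ with $\mathcal L_1\oplus\mathcal L_2$ a complete residue system.

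The missing idea is that Li--Rao's theorem \cite{LR2025} describes $\La$ itself. For some $m$, $\mathbf D_m=\bigcup_s(a_s+N^m\B_s)$ with every $\A\oplus\B_s$ a complete residue system modulo $N^m$, and $\La=\A\oplus N^m\Z$. So $\A$ is a complementing set, hence by hypothesis spectral in $\Z_{N^m}$ with some spectrum $\frac1{N^m}\mathcal L_{\A}$. Combining the spectral pairs $([0,\frac1{N^m}],N^m\Z)$ and $(\frac1{N^m}\mathcal L_{\A},\A)$ shows that $\La$ is a spectrum of $\Omega=[0,\frac1{N^m}]+\frac1{N^m}\mathcal L_{\A}$. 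No Hadamard triple, Ruelle operator or cycle-point analysis is involved.
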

 The rest of the paper is organized as follows. In Section \ref{sec.2}, we first introduce the necessary notation and then present two criteria to determine whether the set $\Lambda$ in \eqref{eq.Lambda} forms an orthonormal set (resp. a spectrum) of the self-similar measure $\mu_{N, \D}$ (Theorems \ref{fomula1} and \ref{fomula2}).  Theorem \ref{fomula2} relies on the Ruelle transfer operator constructed in Lemma \ref{lem1}. This operator and the above two criteria serve as key ingredients for proving  Theorem \ref{th2}. In  Section \ref{sec.3}, we  prove Theorems \ref{th2}, \ref{th3} and \ref{th5}, including  a  complete characterization of the set of integral cycle points for $\mathcal L$. In Section \ref{sec.ex}, we  provide examples to illustrate Theorem \ref{th2} and give the proof of Theorem \ref{thm.dua.}. 
   
\section{Preliminaries}\label{sec.2}
In this section, we first introduce the necessary knowledge for spectral measures and Hadamard triples, and then provide a criterion (Theorem \ref{fomula1}) under which an orthonormal set $\Lambda$ is a spectrum of the measure $\mu_{N, \D}$, which completes the characterization due to \cite{AnLai2023}. Finally, we define a new Ruelle operator that yields a new criterion (Theorem \ref{fomula2}) and serves as a key tool in the proof of the main theorem (Theorem \ref{th2}). 


\subsection{Notation and known results}
Throughout the paper,  we use $e(x)$ to denote $e^{2\pi i x}$, and the Fourier transform of a measure $\mu$ is defined by
 \begin{equation*}
\widehat{\mu}(\xi)=\int e(-\xi x)\dmu(x), \qquad \xi\in\R.
 \end{equation*}
The criterion below provides a universal test to determine whether a set $\Lambda \subset \R$ is an orthonormal set (resp. a spectrum) of a measure $\mu$. For $\mu$ and a discrete set $\Lambda\subset\R$, we always write 
$$Q_{\Lambda}(\xi)=\sum_{\lambda \in \Lambda}\left|\widehat{\mu}(\xi+\lambda)\right|^{2}, \qquad  \xi\in \R.$$
\begin{theo}[\!\cite{JP1998}]\label{cth2}
Let $\mu$ be a Borel probability measure on $\R$ and $\Lambda\subset\R$ be a countable subset. Then
\begin{enumerate}
\item[(i)] $\Lambda$ is an orthonormal set of $\mu$ if and only if $Q_{\Lambda}(\xi)\leq1$ for all $\xi\in\R$. In this case, $Q_{\Lambda}(\xi)$ is an entire function on $\C$.
\item[(ii)] $\Lambda$ is a spectrum of $\mu$ if and only if $Q_{\Lambda}(\xi)\equiv1$ on $\R$.
\end{enumerate}
\end{theo}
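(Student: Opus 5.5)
The plan is to express everything through inner products of exponentials in $L^2(\mu)$, using $\langle e_\lambda, e_{\lambda'}\rangle_{L^2(\mu)}=\widehat{\mu}(\lambda'-\lambda)$ (where $e_\lambda(x)=e(\lambda x)$), and then apply Bessel's inequality and Parseval's identity to the vector $e_{-\xi}$.

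For (i), first assume $\Lambda$ is orthonormal. Fix $\xi\in\R$ and set $f=e_{-\xi}$, which satisfies $\|f\|_{L^2(\mu)}=1$ because $\mu$ is a probability measure. Then
$$|\langle f,e_\lambda\rangle|^2=\Bigl|\int e(-(\xi+\lambda)x)\,\dmu(x)\Bigr|^2=|\widehat{\mu}(\xi+\lambda)|^2,$$
and Bessel's inequality gives $Q_\Lambda(\xi)\le 1$.

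Conversely, suppose $Q_\Lambda\le 1$ on $\R$. Take $\xi=-\lambda_0$ for some $\lambda_0\in\Lambda$. The term $\lambda=\lambda_0$ contributes $|\widehat{\mu}(0)|^2=1$, so every other term vanishes. Hence $\widehat{\mu}(\lambda-\lambda_0)=0$ for all $\lambda\ne\lambda_0$, which is exactly orthogonality of the family.

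For the entire-function claim, extend $\widehat{\mu}$ to $\C$. If $\mu$ has compact support, each $\widehat{\mu}(z+\lambda)$ is entire. Expanding $|\widehat{\mu}(\xi+\lambda)|^2=\widehat{\mu}(\xi+\lambda)\overline{\widehat{\mu}(\bar\xi+\lambda)}$ gives a holomorphic extension of each term. The Cauchy--Schwarz bound on the sum of products, together with local uniform convergence, should then give holomorphy of the sum. I expect this step to be the main obstacle, since the bound on the real line has to be transferred to uniform control on compact subsets of $\C$. My first attempt would be to write each term as $|\langle e_{-\bar z},e_\lambda\rangle|^2$-type inner products of the bounded functions $e_{-z}$ on the support, and apply Bessel to those. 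In the paper's setting $\mu$ is compactly supported, so this route is available.

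For (ii), if $\Lambda$ is a spectrum, Parseval's identity applied to $e_{-\xi}$ gives $Q_\Lambda\equiv1$.

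Conversely, suppose $Q_\Lambda\equiv 1$. By part (i), $\Lambda$ is orthonormal, and Parseval's identity holds for every exponential $e_{-\xi}$. Let $P$ denote the orthogonal projection onto $\overline{\mathrm{span}}\,E(\Lambda)$. Then $\|Pe_{-\xi}\|^2=Q_\Lambda(\xi)=1=\|e_{-\xi}\|^2$, so every exponential lies in that closed span. Finally, trigonometric polynomials are dense in $L^2(\mu)$: by Stone--Weierstrass they are uniformly dense in $C(\mathrm{supp}\,\mu)$ when the support is compact (otherwise via Fourier uniqueness), and continuous functions are dense in $L^2(\mu)$. Therefore $E(\Lambda)$ is complete, and $\Lambda$ is a spectrum.
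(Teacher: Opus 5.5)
Your argument is correct and is the standard Jorgensen--Pedersen proof. The paper gives no proof of its own and only cites \cite{JP1998}, though it reuses your $\xi=-\lambda_0$ trick in Lemma~\ref{cprop1}. As in that standard proof, you apply Bessel and Parseval to $e_{-\xi}$ and then use density of exponentials in $L^2(\mu)$ to get completeness.

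Your restriction of the entire-function claim to compactly supported $\mu$ is genuinely needed, not a convenience. For the Cauchy distribution and $\Lambda=\{0\}$ one gets $Q_\Lambda(\xi)=e^{-4\pi|\xi|}$, which is not entire.

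The local uniform convergence you leave open follows from the bound you already propose. For $\mathrm{supp}\,\mu\subset[-R,R]$ you have $\sum_{\lambda}|\widehat{\mu}(z+\lambda)|^2\le\int|e(-zx)|^2\,d\mu(x)\le e^{4\pi R|\mathrm{Im}\,z|}$, so by Cauchy--Schwarz the partial sums are locally uniformly bounded. They also converge pointwise, so Vitali's theorem makes the limit holomorphic.
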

For a finite set  ${\mathcal D}\subset \R$, we write
$$
M_{\D}(\xi) = \widehat{\delta_{\D}}(\xi) = \frac{1}{\#\D} \sum_{d\in \D} e(-d\xi).
$$
We record the properties of Hadamard triples, whose proof is standard and therefore omitted.
\begin{lem}[\!\cite{LW2002}]\label{lem.ha}
Let $1<N\in\N$  and $\D, {\mathcal L}\subset\Z$ such that $(N,\D,{\mathcal L})$ forms a Hadamard triple. 
\begin{enumerate}
\item[(i)] If $\widehat{\mathcal L}\subset\Z$ satisfies $\widehat{\mathcal L}\equiv{\mathcal L}\pmod{N}$, then $(N,\D,\widehat{\mathcal L})$ forms a Hadamard triple.
\item[(ii)] The elements in ${\mathcal L}$ are in distinct residue modulo $N$.
\item[(iii)] $\frac{1}{N}{\mathcal L}$ is a spectrum of $\delta_{\D}$, i.e.
    $$\sum_{\ell\in{\mathcal L}} \left|M_{\frac{\D}{N}}(\xi+\ell)\right|^2 = 1, \qquad \forall \ \xi\in{\mathbb R}.$$
\end{enumerate}
\end{lem}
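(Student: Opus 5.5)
The statement immediately above is Lemma \ref{lem.ha}, the standard facts about a Hadamard triple $(N,\D,{\mathcal L})$. The plan is to reduce all three parts to the basic identity encoded in unitarity, read entrywise. The matrix $H=\frac{1}{\sqrt{\#\D}}\bigl(e(-d\ell/N)\bigr)_{d,\ell}$ is square because $\#\D=\#{\mathcal L}$, so unitarity of $H$ is equivalent to unitarity of its transpose. Hence the columns of $H$ are orthonormal:
\[
\frac{1}{\#\D}\sum_{d\in\D} e\Bigl(\frac{d(\ell'-\ell)}{N}\Bigr)=\delta_{\ell,\ell'},\qquad \text{equivalently}\qquad M_{\D}\Bigl(\frac{\ell-\ell'}{N}\Bigr)=0 \ \text{ for } \ell\neq\ell'\in{\mathcal L}.
\]
The rows are orthonormal as well, which is the same statement with the roles of $\D$ and ${\mathcal L}$ swapped.

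For (i), each entry $e(-d\ell/N)$ with $d\in\Z$ depends only on $\ell \bmod N$. So if $\widehat{\mathcal L}\equiv{\mathcal L}\pmod N$ through a bijection $\ell\mapsto\widehat\ell$, the matrix built from $\widehat{\mathcal L}$ is literally the same matrix, and it is unitary.

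For (ii), suppose $\ell\neq\ell'$ but $\ell\equiv\ell'\pmod N$. Then the two corresponding columns coincide, so their inner product is $1$ rather than $0$, which contradicts orthonormality.

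For (iii), the function $f(\xi)=\sum_{\ell\in{\mathcal L}}\bigl|M_{\D/N}(\xi+\ell)\bigr|^2=\sum_{\ell}\bigl|M_{\D}((\xi+\ell)/N)\bigr|^2$ must be shown to be identically $1$. Expanding the square gives
\[
f(\xi)=\frac{1}{(\#\D)^2}\sum_{d,d'\in\D} e\Bigl(-\frac{(d-d')\xi}{N}\Bigr)\sum_{\ell\in{\mathcal L}} e\Bigl(-\frac{(d-d')\ell}{N}\Bigr).
\]
By the row orthonormality of $H$, the inner sum equals $\#{\mathcal L}\,\delta_{d,d'}=\#\D\,\delta_{d,d'}$. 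Only the diagonal terms $d=d'$ survive, contributing $\#\D\cdot\#\D/(\#\D)^2=1$. Equivalently, this says $E(\frac{{\mathcal L}}{N})$ is an orthonormal basis of $L^2(\delta_{\D})$: there are $\#\D$ orthonormal vectors in a $\#\D$-dimensional space, which is Theorem \ref{cth2} (ii) for the discrete measure.

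None of these steps is a real obstacle. The only point needing care is to invoke unitarity of the square matrix in both directions, columns for (ii) and rows for (iii), and to keep the sign conventions in $M_{\D}$ consistent.
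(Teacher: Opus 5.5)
Your proof is correct. The paper omits the proof of this lemma as standard (citing {\L}aba--Wang), and your argument is exactly that standard proof: entries $e(-d\ell/N)$ depend only on $\ell \bmod N$ for (i) and (ii), and row orthonormality, or equivalently the dimension count in $L^2(\delta_{\D})$, for (iii). As you noted, in (i) the hypothesis $\widehat{\mathcal L}\equiv{\mathcal L}\pmod N$ must be read as a residue-preserving bijection; equivalently, $\#\widehat{\mathcal L}=\#{\mathcal L}$ with the same residues, which by (ii) forces a bijection.
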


\subsection{The criteria for the orthogonality and completeness}

In this subsection, we present criteria characterizing when a set of the form $\Lambda = \frac{{\mathcal L}_2}{N} \oplus \Gamma$ in \eqref{eq.Lambda} forms an orthonormal set (resp. a spectrum) of the self-similar measure $\mu_{N, \D}$. The first-named author and Lai \cite{AnLai2023} provided a sufficient condition for constructing such spectra. The following theorem complements  the characterization by showing that this condition is also necessary.

\begin{theo}\label{fomula1}
Let $\mu_{N,\D}$ be a self-similar measure generated by the product-form Hadamard triple $(N, \D,\mathcal{L})$ given in \eqref{eq6}. Suppose that $\Gamma\subset\Z$. Then $\Lambda=\frac{{\mathcal L}_2}{N}\oplus\Gamma$ is a spectrum of $\mu_{N,\mathcal{D}}$ if and only if $$Q_{\Gamma}(\xi)= \frac{1}{n}\sum_{s=0}^{n-1}\left|M_{\mathcal{B}_{s}}(\xi)\right|^{2}, \qquad \forall\ \xi\in\R.$$
\end{theo}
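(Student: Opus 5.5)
The plan is to reduce $Q_\Lambda$ to $Q_\Gamma$ by splitting off the $\frac{\mathcal L_2}{N}$ part. Then I compare $Q_\Gamma$ with $F(\xi):=\frac1n\sum_{s=0}^{n-1}|M_{\mathcal B_s}(\xi)|^2$ using a Bessel-type inequality, and conclude with Theorem \ref{cth2}(ii).

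\textbf{Step 1 (two identities).} Since $\Lambda=\frac{\mathcal L_2}{N}\oplus\Gamma$, we have
$$Q_\Lambda(\xi)=\sum_{\ell\in\mathcal L_2}Q_\Gamma\bigl(\xi+\tfrac{\ell}{N}\bigr).$$
Next, by Definition \ref{defi1}(i), each $(N,\mathcal B_s,\mathcal L_2)$ is a Hadamard triple, so all the $\#\mathcal B_s$ equal $\#\mathcal L_2$. Applying Lemma \ref{lem.ha}(iii) to $(N,\mathcal B_s,\mathcal L_2)$ and replacing $\xi$ by $N\xi$ gives $\sum_{\ell\in\mathcal L_2}|M_{\mathcal B_s}(\xi+\ell/N)|^2=1$. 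Averaging over $s$ yields
$$\sum_{\ell\in\mathcal L_2}F\bigl(\xi+\tfrac{\ell}{N}\bigr)=1\qquad\text{for all }\xi .$$
This already gives sufficiency: if $Q_\Gamma\equiv F$, then $Q_\Lambda\equiv1$, so $\Lambda$ is a spectrum by Theorem \ref{cth2}(ii).

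\textbf{Step 2 (structure of $\widehat\mu_{N,\mathcal D}$).} Write a point of $T(N,\mathcal D)$ as $x=\sum_k d_kN^{-k}$ with $d_k=a_{s_k}+Nb_k$ and $b_k\in\mathcal B_{s_k}$. This gives $x=b_1+\frac{a_{s_1}}{N}+\frac{x'}{N}$, where $x'$ is again distributed according to $\mu:=\mu_{N,\mathcal D}$. Because the $\#\mathcal B_s$ are equal, conditioning on $s_1=s$ shows
$$\mu=\frac1n\sum_{s}\delta_{\mathcal B_s}*\nu_s,\qquad \widehat{\nu_s}(t)=e\!\left(-\tfrac{a_st}{N}\right)\widehat\mu\!\left(\tfrac tN\right),$$
with each $\nu_s$ a probability measure. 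Hence
$$\widehat\mu(t)=\frac1n\sum_s M_{\mathcal B_s}(t)\,\widehat{\nu_s}(t).$$
Since $\mathcal B_s\subset\mathbb Z$, each $M_{\mathcal B_s}$ is $1$-periodic and equals $1$ on $\mathbb Z$.

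\textbf{Step 3 (Bessel bound $Q_\Gamma\le F$, the key step).} Let $H=\bigoplus_s L^2(\nu_s)$ with inner product $\langle f,g\rangle_H=\frac1n\sum_s\int f_s\overline{g_s}\,d\nu_s$. For $\gamma\in\Gamma$, let $E_\gamma=(e(\gamma x))_s\in H$. For integers $k$ we get $\langle E_{\gamma},E_{\gamma'}\rangle_H=\frac1n\sum_s\widehat{\nu_s}(\gamma'-\gamma)=\widehat\mu(\gamma'-\gamma)$, using $M_{\mathcal B_s}(k)=1$. Now $0\in\mathcal L_2$, so $\Gamma\subset\Lambda$, and $\Lambda$ is orthonormal for $\mu$ (Lemma \ref{prop.rep1}, or simply because it is a spectrum in the necessity direction). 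Hence $\{E_\gamma\}_{\gamma\in\Gamma}$ is orthonormal in $H$. Put $g_\xi=\bigl(\overline{M_{\mathcal B_s}(\xi)}\,e(-\xi x)\bigr)_s$. Periodicity of $M_{\mathcal B_s}$ gives $|\langle g_\xi,E_\gamma\rangle_H|=|\widehat\mu(\xi+\gamma)|$, while $\|g_\xi\|_H^2=F(\xi)$. Bessel's inequality therefore gives
$$Q_\Gamma(\xi)\le F(\xi)\qquad\text{for all }\xi\in\mathbb R.$$
I expect this step to be the main obstacle. The naive Cauchy--Schwarz estimate loses control of $\sum_\gamma|\widehat\mu((\xi+\gamma)/N)|^2$; the vector-valued Hilbert space $H$ is exactly what avoids that loss.

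\textbf{Step 4 (necessity).} Suppose $\Lambda$ is a spectrum. By Step 1 and Theorem \ref{cth2}(ii),
$$\sum_{\ell\in\mathcal L_2}\bigl(F-Q_\Gamma\bigr)\bigl(\xi+\tfrac{\ell}{N}\bigr)=1-Q_\Lambda(\xi)=0 .$$
Each summand is nonnegative by Step 3, so every summand vanishes. Taking $\ell=0$ gives $Q_\Gamma(\xi)=F(\xi)$ for all $\xi$, which completes the proof.
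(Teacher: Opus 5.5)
Your overall structure matches the paper's: the identity $Q_\Lambda(\xi)=\sum_{\ell\in\mathcal L_2}Q_\Gamma(\xi+\ell/N)$, the bound $Q_\Gamma\le\frac1n\sum_s|M_{\mathcal B_s}|^2$, and then forcing equality termwise. There is, however, a conjugation slip in Step 3 that you must fix. With $g_\xi=\bigl(\overline{M_{\mathcal B_s}(\xi)}\,e(-\xi x)\bigr)_s$ you get
\[
\langle g_\xi,E_\gamma\rangle_H=\frac1n\sum_{s}\overline{M_{\mathcal B_s}(\xi)}\,\widehat{\nu_s}(\xi+\gamma),
\]
whereas $\widehat\mu(\xi+\gamma)=\frac1n\sum_{s}M_{\mathcal B_s}(\xi)\,\widehat{\nu_s}(\xi+\gamma)$. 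For $n\ge 2$ these need not have the same modulus, so the claim $|\langle g_\xi,E_\gamma\rangle_H|=|\widehat\mu(\xi+\gamma)|$ is false as written. Drop the bar and take $g_\xi=\bigl(M_{\mathcal B_s}(\xi)\,e(-\xi x)\bigr)_s$. Then $\langle g_\xi,E_\gamma\rangle_H=\widehat\mu(\xi+\gamma)$ exactly, $\|g_\xi\|_H^2$ is unchanged, and Bessel gives $Q_\Gamma\le F$.

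Also, Lemma \ref{prop.rep1} is not available here, because Theorem \ref{fomula1} does not assume $\Lambda$ is self-replicating. Your other justification, that $\Lambda$ is a spectrum in the necessity direction, is the one to use.

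With that correction your proof is valid, and it obtains the key inequality differently from the paper. The paper first proves the bound for the finite convolutions $\mu_p$ (Lemma \ref{cprop1}). It applies Bessel's inequality in $\C^{n^pm^{p-1}}$, using that on integers $\widehat\mu_p$ agrees with the transform of $\delta_{(\A+{\bf D}_{p-1})/N}$. It then passes to $\mu_{N,\D}$ in Lemma \ref{cprop2} by exhausting $\Gamma$ with finite pieces $\Gamma_k$, choosing $p_k$ so that $\Gamma_k$ is orthogonal for $\mu_{p_k}$, and using $|\widehat\mu_{N,\D}|\le|\widehat\mu_{p_k}|$.

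Your one-step decomposition $\mu=\frac1n\sum_s\delta_{\mathcal B_s}*\nu_s$ and the space $\bigoplus_s L^2(\nu_s)$ run the same Bessel argument directly on the infinite measure. Your $E_\gamma$ and $g_\xi$ are the limiting versions of the paper's ${\bf v}_\gamma$ and ${\bf w}_\xi$, so no truncation or limiting step is needed. In exchange, the paper's route yields the finite-level characterization of Lemma \ref{cprop1} and the two-sided criterion of Lemma \ref{cprop2}, which it reuses for Theorem \ref{fomula2}(i). Your Step 4 is the paper's contradiction argument stated directly.
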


Before proving Theorem \ref{fomula1}, we first establish several inequalities for some orthonormal sets. The self-similar measure $ \mu_{N,\D}$ satisfies the following infinite product formula
$$
\widehat{\mu}_{N,\D}(\xi) = \prod_{j=1}^{\infty} M_{\D} \left(\frac{\xi}{N^j}\right).
$$
We  can decompose $\mu_{N,\D}$ as a finite convolution $\mu = \mu_p \ast \mu_{>p}$, where
$$
\mu_p = \delta_{\frac{\D}{N}}\ast\cdots\ast \delta_{\frac{\D}{N^p}}
$$
and $\mu_{>p}$ is  the convolution of the remaining factors.

\bigskip

\begin{lem}\label{cprop1}
Suppose that $\Gamma\subset\Z$. Then $\Gamma$ is an orthonormal set of $\mu_p$ if and only if
\begin{equation}\label{eq10}
\sum_{\gamma\in\Gamma}|\widehat{\mu}_p(\xi+\gamma)|^2\leq \frac{1}{n}\sum_{s=0}^{n-1}\left|M_{\mathcal{B}_{s}}(\xi)\right|^{2}, \qquad \forall\ \xi\in\mathbb{R}.
\end{equation}
\end{lem}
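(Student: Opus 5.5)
The plan is to reduce orthonormality of the integer set $\Gamma$ in $L^2(\mu_p)$ to a statement on the torus $\R/\Z$. The key point is that the right-hand side of \eqref{eq10} is exactly the squared norm of the projection of $e_\xi(x)=e(\xi x)$ onto the $1$-periodic functions in $L^2(\mu_p)$.

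\emph{Sufficiency.} Assume \eqref{eq10} holds and let $\gamma'\in\Gamma$. Evaluate \eqref{eq10} at $\xi=-\gamma'$. Since each $\mathcal B_s\subset\Z$, we have $M_{\mathcal B_s}(-\gamma')=1$, so the right-hand side equals $1$. On the left, the term $\gamma=\gamma'$ contributes $|\widehat\mu_p(0)|^2=1$. Hence $\widehat\mu_p(\gamma-\gamma')=0$ for every $\gamma\neq\gamma'$ in $\Gamma$, which is exactly orthonormality of $\Gamma$ for $\mu_p$.

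\emph{Necessity: the projection inequality.} Let $\mathcal H\subset L^2(\mu_p)$ be the closed subspace of functions depending only on $x \bmod 1$. For $\gamma\in\Z$ the exponential $e_\gamma$ is $1$-periodic, so $E(\Gamma)\subset\mathcal H$. Write $P$ for the orthogonal projection onto $\mathcal H$; it is the conditional expectation onto the fibres of $x\mapsto x\bmod 1$ on the finite support of $\mu_p$. Since $\langle e_\xi,e_\gamma\rangle=\langle Pe_\xi,e_\gamma\rangle$, Bessel's inequality for the orthonormal set $E(\Gamma)$ gives
$$\sum_{\gamma\in\Gamma}|\widehat\mu_p(\xi+\gamma)|^2=\sum_{\gamma\in\Gamma}|\langle Pe_\xi,e_\gamma\rangle|^2\le \|Pe_\xi\|^2 .$$
Explicitly, with the sum over classes $y\in\R/\Z$ of points in the support,
$$\|Pe_\xi\|^2=\sum_{y}\frac{\bigl|\sum_{x\equiv y}\mu_p(\{x\})e(\xi x)\bigr|^2}{\sum_{x\equiv y}\mu_p(\{x\})}.$$
It therefore remains to show that $\|Pe_\xi\|^2=\frac1n\sum_s|M_{\mathcal B_s}(\xi)|^2$.

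\emph{Main step: identifying the fibres of $x\mapsto x\bmod 1$.} A support point is $x=\sum_{j=1}^p d_j/N^j$ with $d_j=a_{s_j}+Nb_j$ and $b_j\in\mathcal B_{s_j}$. Modulo $1$ the term $b_1$ drops out, and $N^p x$ becomes $a_{s_1}N^{p-1}+\sum_{j\ge2}(a_{s_j}+Nb_j)N^{p-j}$ modulo $N^p$. I expect the following claim to be the only delicate point.

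\emph{Claim:} this residue determines $s_1,\dots,s_p$ and $b_2,\dots,b_p$.

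The argument is a digit-by-digit induction from the lowest level.
\begin{itemize}
\item Since $(N,\A,\mathcal L_1)$ is Hadamard, the elements of $\A$ are distinct mod $N$. The last digit $a_{s_p}$ is read off mod $N$, so $s_p$ is determined.
\item At the next level the digit is $a_{s_{p-1}}+b_p$ modulo $N$, up to carries that are already known. Because $(N,\A\oplus\mathcal B_{s_p},\mathcal L)$ is Hadamard, $\A\oplus\mathcal B_{s_p}$ is distinct mod $N$, by the symmetric version of Lemma \ref{lem.ha}(ii). Hence this digit determines both $s_{p-1}$ and $b_p$.
\item Continuing upward, the top level determines $s_1$ and $b_2$.
\end{itemize}

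\emph{Conclusion of necessity.} By the Claim, each fibre consists exactly of the points obtained by letting $b_1$ range over $\mathcal B_{s_1}$, with all other data fixed. All these points carry equal mass. Within a fibre, the phase $e(\xi x)$ varies only through the factor $e(\xi b_1)$, which gives a fibre contribution of (fibre mass)$\cdot|M_{\mathcal B_{s_1}}(\xi)|^2$. All $\mathcal B_s$ have the same cardinality, being Hadamard partners of $\mathcal L_2$, so the total mass of the fibres with a given value of $s_1$ is $1/n$. Summing over fibres therefore gives $\|Pe_\xi\|^2=\frac1n\sum_{s}|M_{\mathcal B_s}(\xi)|^2$, which yields \eqref{eq10}.
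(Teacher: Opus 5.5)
Your argument is correct, but it reaches the bound by a different route from the paper's.

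\textbf{Where the proofs agree.} Sufficiency is the paper's argument verbatim. For necessity, both proofs rest on Bessel's inequality once the $b_1$-part of the first digit has been factored out as $M_{\B_{s_1}}(\xi)$.

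\textbf{How the paper does it.} The paper never treats the support of $\mu_p$ as a point set. It expands $\widehat{\mu}_p(\xi+\gamma)$ over digit tuples and uses $e(b\gamma)=1$ to pull out $M_{\B_s}(\xi)$. It then writes the result as $\langle \mathbf{v}_\gamma,\mathbf{w}_\xi\rangle$ in $\C^{n^pm^{p-1}}$, with coordinates indexed by the data $(a_s,d)$, $d\in\mathbf{D}_{p-1}$. Orthonormality of the $\mathbf{v}_\gamma$ comes from $\widehat{\mu}_p(t)=\widehat{\delta}_{\frac{\A+\mathbf{D}_{p-1}}{N}}(t)$ for $t\in\Z$, and $\|\mathbf{w}_\xi\|^2=\frac1n\sum_s|M_{\B_s}(\xi)|^2$ is immediate. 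Because everything is indexed by tuples with uniform weights, no distinctness of support points or fibres is needed. In particular, condition (ii) of Definition \ref{defi1} plays no role in that proof.

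\textbf{What your route buys and costs.} Your projection onto the $1$-periodic subspace is more conceptual and yields more. The right-hand side of \eqref{eq10} becomes exactly $\|Pe_{-\xi}\|^2$. For instance, equality for all $\xi$ then says precisely that $E(\Gamma)$ is an orthonormal basis of the periodic subspace of $L^2(\mu_p)$. The price is your fibre Claim, which needs $\A$ and every $\A\oplus\B_s$ to be distinct modulo $N$; your carry-by-carry induction does establish it.

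For the inequality alone, however, the Claim is dispensable. Pull $L^2(\mu_p)$ back isometrically to the uniform measure on digit tuples. Then $1$-periodic functions become functions constant on $b_1$-orbits, so $\|Pe_\xi\|^2\le\frac1n\sum_s|M_{\B_s}(\xi)|^2$ however the fibres look. That lifted computation is exactly the paper's.

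\textbf{A small sign slip.} With $e_\xi(x)=e(\xi x)$ and the convention $\widehat{\mu}(\xi)=\int e(-\xi x)\,d\mu(x)$, one has $\langle e_\xi,e_\gamma\rangle=\widehat{\mu}_p(\gamma-\xi)$. So the left side of \eqref{eq10} equals $\sum_{\gamma}|\langle Pe_{-\xi},e_\gamma\rangle|^2$, not the expression with $Pe_\xi$. This is harmless because $|M_{\B_s}(-\xi)|=|M_{\B_s}(\xi)|$.
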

\begin{proof}
$(\Leftarrow)$
Taking $\xi=-\gamma_0\in-\Gamma\subset\Z$ in \eqref{eq10},  we obtain
\begin{eqnarray*}
\sum_{\gamma\in\Gamma}|\widehat{\mu}_p(\gamma-\gamma_0)|^2
=1+\sum_{\gamma\in\Gamma\setminus\{\gamma_0\}}|\widehat{\mu}_p(\gamma-\gamma_0)|^2
\le \frac{1}{n}\sum_{s=0}^{n-1}\left|M_{\mathcal{B}_{s}}(-\gamma_0)\right|^{2}=1.
\end{eqnarray*}
Thus, for any $\gamma\neq\gamma_0$,
$$\widehat{\mu}_p(\gamma-\gamma_0)=0.$$
Since the choice of $\gamma_0\in\Gamma$ is arbitrary, the set $\Gamma$ forms an orthonormal set of $\mu_p$.

\bigskip

$(\Rightarrow)$
Let ${\bf D}_p=\sum_{k=1}^p\frac{\D}{N^k}$ and $m=\# \mathcal{B}_s$ $(s=0,1,\ldots,n-1)$. For fixed $\xi\in\R$ and $\gamma\in\Gamma$, let 
$${{\bf w}_{\xi}}=\frac{1}{\sqrt{n^pm^{p-1}}}\left(e\left(-\frac{(a_s+d)\xi}{N}\right)M_{{\mathcal B}_s}(-\xi)\right)_{a_s+d\in {\A+{\bf D}_{p-1}}}$$
and
$$
{\bf v}_{\gamma}=\frac{1}{\sqrt{n^pm^{p-1}}}\left(e\left(\frac{(a_s+d)\gamma}{N}\right)\right)_{a_s+d\in {\A+{\bf D}_{p-1}}}
$$
be two vectors 
in $\C^{n^p m^{p-1}}$. Since $\D=\bigcup_{s=0}^{n-1}\left(a_s+N\B_s\right)$ and
$\mu_p=\delta_{\frac{\D}{N}}\ast\delta_{\frac{\D}{N^2}}\ast\cdots\ast\delta_{\frac{\D}{N^p}}=\delta_{\frac{\D+{\bf D}_{p-1}}{N}},$ it follows that 
\begin{eqnarray}\label{eq-inner}
\sum_{\gamma\in\Gamma}|\widehat{\mu}_p(\xi+\gamma)|^2&=&\sum_{\gamma\in\Gamma}\left|\frac{1}{(nm)^p}\sum_{a_s\in \A}\sum_{b\in\B_s}\sum_{d\in{\bf D}_{p-1}}e\left(\frac{(a_s+Nb+d)}{N}(\xi+\gamma)\right)\right|^2\nonumber\\
&=&\sum_{\gamma\in\Gamma}\left|\frac{1}{n^pm^{p-1}}\sum_{a_s\in \A}\sum_{d\in{\bf D}_{p-1}}e\left(\frac{(a_s+d)\gamma}{N}\right)\cdot e\left(\frac{(a_s+d)\xi}{N}\right)M_{B_s}(\xi)\right|^2\nonumber\\
&=&\sum_{\gamma\in\Gamma}\left|\langle {\bf v}_{\gamma}, {\bf w}_{\xi} \rangle\right|^2,
\end{eqnarray}
where the second equality holds  because $\mathcal{B}_s\subset \Z$ for each $s = 0,1,\ldots,n-1$  and $\Gamma \subset\mathbb{Z}$ .

Note that for any integer $t$, we have
$$M_{\frac{\D}{N}}(t)=\frac1n\sum_{s=0}^{n-1}e^{-2\pi i \frac{a_s t}{N}}M_{B_s}(t)=M_{\frac{\A}{N}}(t),$$
 which yields that
\begin{eqnarray*}
{\widehat{\mu}_p}(t)=M_{\frac{\D}{N}}(t)\cdot M_{\frac{{\bf D}_{p-1}}{N}}(t)=M_{\frac{\A}{N}}(t)\cdot M_{\frac{{\bf D}_{p-1}}{N}}(t)
=\widehat{{\delta}}_{\frac{\A+{\bf D}_{p-1}}{N}}(t).
\end{eqnarray*}
Thus, if $\Gamma \subset \mathbb{Z}$, then $\Gamma$ is an orthonormal set of $\mu_p$ if and only if it is an orthonormal set of the discrete measure
$\delta_{\frac{\mathcal{A}+\mathbf{D}_{p-1}}{N}}$. So the following set of vectors 
$$\left\{{\bf v}_{\gamma}=\frac{1}{\sqrt{n^pm^{p-1}}}\left(e\left(\frac{(a_s+d)\gamma}{N}\right)\right)_{a_s+d\in {\A+{\bf D}_{p-1}}}: \gamma\in\Gamma\right\}$$
 forms an orthonormal system in $\C^{n^p m^{p-1}}$. 
 Applying Bessel's inequality to \eqref{eq-inner} yields the following inequality
\begin{eqnarray*}
\sum_{\gamma\in\Gamma}|\widehat{\mu}_p(\xi+\gamma)|^2=\sum_{\gamma\in\Gamma}\left|\langle {\bf v}_{\gamma}, {\bf w}_{\xi} \rangle\right|^2\le\|{\bf w}_{\xi}\|^2=\frac{1}{n}\sum_{s=0}^{n-1}\left|M_{\mathcal{B}_{s}}(\xi)\right|^{2},
\end{eqnarray*} where $\|\cdot\|$ denotes the matrix $2$-norm of $\C^{n^p m^{p-1}}$. 
This completes the proof.
\end{proof}

\bigskip

\begin{lem}\label{cprop2}
Suppose that $\Gamma\subset\Z$. Then $\Lambda=\frac{{\mathcal L_2}}{N}\oplus \Gamma$ is an orthonormal set of $\mu_{N, \D}$ if and only if
\begin{equation*}
Q_{\Gamma}(\xi)\leq \frac{1}{n}\sum_{s=0}^{n-1}\left|M_{\mathcal{B}_{s}}(\xi)\right|^{2}, \qquad \forall\ \xi\in\mathbb{R}.
\end{equation*}
\end{lem}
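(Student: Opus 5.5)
Throughout, write $\mu=\mu_{N,\D}$. The plan is to reduce the orthonormality of $\Lambda=\frac{\mathcal L_2}{N}\oplus\Gamma$ for $\mu$ to the corresponding statement for $\Gamma$ and the finite convolutions $\mu_p$, and then let $p\to\infty$ using Lemma \ref{cprop1}. The key identity I would establish first is a decomposition of $Q_\Lambda$. Write $\mu=\delta_{\frac{\D}{N}}\ast\nu$ with $\nu(\cdot)=\mu(N\,\cdot)$, so that $\widehat\mu(\xi)=M_{\frac{\D}{N}}(\xi)\widehat\mu(\xi/N)$. For $\lambda=\frac{\ell_2}{N}+\gamma$, the $\D$-structure $\D=\bigcup_s(a_s+N\B_s)$ gives $M_{\frac{\D}{N}}(\xi)=\frac1n\sum_s e(-a_s\xi/N)M_{\B_s}(\xi)$. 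Then summing over $\ell_2\in\mathcal L_2$ and using the Hadamard property of $(N,\B_s,\mathcal L_2)$ should produce an expression of the form
\begin{equation*}
Q_\Lambda(\xi)=\sum_{\ell_2}\sum_{\gamma}\Bigl|\widehat\mu\bigl(\xi+\tfrac{\ell_2}{N}+\gamma\bigr)\Bigr|^2 ,
\end{equation*}
which I would compare with $Q_\Gamma$ evaluated at the shifted points $\xi+\frac{\ell_2}{N}$. However, working directly with this infinite product is messy, so I would instead argue by orthogonality relations.

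\textbf{Direction $(\Leftarrow)$.} Assume $Q_\Gamma(\xi)\le\frac1n\sum_s|M_{\B_s}(\xi)|^2$. Orthonormality of $\Lambda$ means $\widehat\mu(\lambda-\lambda')=0$ for all distinct $\lambda,\lambda'\in\Lambda$. Write $\lambda-\lambda'=\frac{\ell_2-\ell_2'}{N}+(\gamma-\gamma')$ and split into two cases.
\begin{itemize}
\item If $\ell_2\ne\ell_2'$: I would show that $M_{\frac{\D}{N^2}}$ or a factor built from $\B_s$ vanishes. Since $\gamma-\gamma'\in\Z$, the $a_s$-part contributes $e(-a_s(\cdot)/N)$ and the $\B_s$-part contributes $M_{\B_s}\bigl(\frac{\ell_2-\ell_2'}{N}\bigr)$ evaluated at the second scale. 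These vanish for all $s$ by the Hadamard triple $(N,\B_s,\mathcal L_2)$, so the corresponding factor of $\widehat\mu$ is zero.
\item If $\ell_2=\ell_2'$: the difference is $\gamma-\gamma'\in\Z$. Here I would use the hypothesis at $\xi=-\gamma'$, exactly as in the $(\Leftarrow)$ part of Lemma \ref{cprop1}. Since $M_{\B_s}(-\gamma')=1$, we get $Q_\Gamma(-\gamma')\le1$, while $Q_\Gamma(-\gamma')\ge|\widehat\mu(0)|^2=1$. Hence all other terms vanish.
\end{itemize}

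\textbf{Direction $(\Rightarrow)$.} Assume $\Lambda$ is orthonormal for $\mu$. The first step is to show that $\Gamma$ is orthonormal for every $\mu_p$. For $\gamma\ne\gamma'$, $\widehat\mu(\gamma-\gamma')=0$ means some factor $M_\D((\gamma-\gamma')/N^j)$ vanishes. I expect this step to be the main obstacle, because a zero at a factor $j>p$ does not by itself give a zero of $\widehat\mu_p$. I would try to avoid it by not passing through a fixed $p$, and instead bound $\sum_\gamma|\widehat\mu(\xi+\gamma)|^2$ directly. Since $\widehat\mu=\widehat\mu_p\cdot\widehat{\mu_{>p}}$ and $|\widehat{\mu_{>p}}(\xi+\gamma)-\widehat{\mu_{>p}}(0)|\to0$ uniformly on bounded sets, it suffices to handle finitely many $\gamma$ at a time.

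Concretely, fix a finite $F\subset\Gamma$. The differences $\gamma-\gamma'$ with $\gamma,\gamma'\in F$ form a finite set, and each is a zero of $\widehat\mu$. So for $p$ large all of them are zeros of $\widehat\mu_p$, since each is a zero of some factor $M_\D(\cdot/N^j)$ with $j$ bounded. Thus $F$ is orthonormal for $\mu_p$ for all $p\ge p_F$. The Bessel argument in the proof of Lemma \ref{cprop1} works verbatim for a finite orthonormal subset and gives
\begin{equation*}
\sum_{\gamma\in F}|\widehat\mu_p(\xi+\gamma)|^2\le\frac1n\sum_{s}|M_{\B_s}(\xi)|^2 .
\end{equation*}
Letting $p\to\infty$, we have $\widehat\mu_p\to\widehat\mu$ pointwise, because $\widehat\mu(\xi)=\widehat\mu_p(\xi)\widehat\mu(\xi/N^p)$ and $\widehat\mu(\xi/N^p)\to1$. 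The same inequality therefore holds for $\widehat\mu$. Finally, taking the supremum over finite $F$ gives $Q_\Gamma(\xi)\le\frac1n\sum_s|M_{\B_s}(\xi)|^2$.
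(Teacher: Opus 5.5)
Your proof is correct. The $(\Rightarrow)$ direction is essentially the paper's argument. You exhaust $\Gamma$ by finite subsets and note that each one is orthonormal for some $\mu_p$, because a zero of $\widehat{\mu}_{N,\mathcal{D}}$ is a zero of one of the factors $M_{\mathcal{D}}(\cdot/N^j)$. You then apply Lemma \ref{cprop1} and pass to the limit. The paper uses $\Gamma\cap[-k,k]$ and the bound $|\widehat{\mu}_{N,\mathcal{D}}|\le|\widehat{\mu}_{p_k}|$ instead of $\widehat{\mu}_p\to\widehat{\mu}_{N,\mathcal{D}}$, which is a cosmetic difference.

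Your $(\Leftarrow)$ direction takes a different route. The paper uses exactly the decomposition you set aside as messy:
$Q_\Lambda(\xi)=\sum_{\ell_2\in\mathcal{L}_2}Q_\Gamma(\xi+\frac{\ell_2}{N})\le\frac{1}{n}\sum_{s}\sum_{\ell_2\in\mathcal{L}_2}|M_{\mathcal{B}_s}(\xi+\frac{\ell_2}{N})|^2=1.$
The last equality is Lemma \ref{lem.ha}(iii) for the triples $(N,\mathcal{B}_s,\mathcal{L}_2)$, and the paper then concludes by Theorem \ref{cth2}(i). This takes two lines, and with equality in place of the inequality it is exactly the sufficiency argument of Theorem \ref{fomula1}.

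You instead check orthogonality pair by pair. Differences with $\ell_2\neq\ell_2'$ are zeros of $\widehat{\mu}_{N,\mathcal{D}}$ unconditionally, as in Lemma \ref{prop.rep1}. Differences inside one coset are handled by the hypothesis evaluated only at the integers $-\gamma'$, as in the $(\Leftarrow)$ part of Lemma \ref{cprop1}. Your version avoids the Jorgensen--Pedersen criterion and makes clear that orthonormality of $\Lambda$ is really a statement about $\Gamma$ alone. The paper's version is shorter and sets up the spectral case directly.

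One slip to fix. In the case $\ell_2\neq\ell_2'$, the factor that vanishes is the first one,
$M_{\frac{\mathcal{D}}{N}}(\lambda-\lambda')=\frac{1}{n}\sum_{s}e\left(-\frac{a_s(\lambda-\lambda')}{N}\right)M_{\mathcal{B}_s}(\lambda-\lambda'),$
where $M_{\mathcal{B}_s}(\lambda-\lambda')=M_{\mathcal{B}_s}\left(\frac{\ell_2-\ell_2'}{N}\right)=0$ by $1$-periodicity and the Hadamard property. It is not $M_{\frac{\mathcal{D}}{N^2}}$, whose $\mathcal{B}_s$-part is evaluated at $(\lambda-\lambda')/N$ and need not vanish.
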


\begin{proof} $(\Leftarrow)$  By interchanging the order of  summation,
$$
Q_{\La}(\xi)=\sum_{\ell_2\in{\mathcal L}_2}\sum_{\gamma\in\Gamma} \left|\widehat{\mu}_{N,\D}\left(\xi+\frac1N\ell_2+\gamma\right)\right|^2 \le \frac1n\sum_{s=0}^{n-1} \sum_{\ell_2\in{\mathcal L}_2}\left|M_{{\mathcal B}_s}\left(\xi+\frac1N\ell_2\right)\right|^2.
$$
As $\frac{1}{N}{\mathcal L}_2$ is a spectrum of $\delta_{{\mathcal B}_s}$ for all $s=0,\ldots ,n-1$, it follows from Lemma \ref{lem.ha} that the above sum on the right-hand side is equal to $1$.   This shows that $\La$ is an orthonormal set of $\mu_{N, \D}$.

\medskip

$(\Rightarrow)$ Suppose that $\Lambda$ is an orthonormal set of $\mu_{N, \D}$. Since $0\in\mathcal{L}_2$, the set $\Gamma$ is also an orthonormal set of $\mu_{N, \D}$. Write $$\Gamma=\bigcup_{k=1}^{\infty}\Gamma_k\quad \text{with} \quad \Gamma_k:=\Gamma\cap [-k, k].$$
 Therefore, for each $k\geq1$, there is an integer $p_k$  such that $\Gamma_k$ is an orthonormal set of $\mu_{p_k}$. By Lemma \ref{cprop1}, 
\begin{equation*}
\sum_{\gamma\in\Gamma_k}|\widehat{\mu}_{p_k}(\xi+\gamma)|^2
\leq \frac{1}{n}\sum_{s=0}^{n-1}\left|M_{\mathcal{B}_{s}}(\xi)\right|^{2},
\qquad \forall\ \xi\in\mathbb{R}.
\end{equation*}
Since $|\widehat{\mu}_{N, \D}(\xi)|\le |\widehat{\mu}_{p_k}(\xi)|$ for each $k\geq1$ and for every $\xi\in\R$,  we have
\begin{eqnarray*}
Q_{\Gamma}(\xi)=\lim_{k\to\infty}\sum_{\gamma\in\Gamma_k}|\widehat{\mu}_{N, \D}(\xi+\gamma)|^2
\le\lim_{k\to\infty}\sum_{\gamma\in\Gamma_k}|\widehat{\mu}_{p_k}(\xi+\gamma)|^2
\leq \frac{1}{n}\sum_{s=0}^{n-1}\left|M_{\mathcal{B}_{s}}(\xi)\right|^{2}.
\end{eqnarray*}
The proof of Lemma \ref{cprop2} is complete. 
\end{proof}

\bigskip

Now we are ready to prove Theorem \ref{fomula1}.

\bigskip

\noindent{\bf Proof of Theorem \ref{fomula1}.}  The sufficiency has been proved in \cite{AnLai2023}, we give it here for the sake of completeness. By interchanging summation,
$$
Q_{\Lambda}(\xi)=\sum_{\ell_2\in{\mathcal L}_2}\sum_{\gamma\in\Gamma} \left|\widehat{\mu}_{N,\D}\left(\xi+\frac1N\ell_2+\gamma\right)\right|^2 = \frac1n\sum_{s=0}^{n-1} \sum_{\ell_2\in{\mathcal L}_2}\left|M_{{\mathcal B}_s}\left(\xi+\frac1N\ell_2\right)\right|^2.
$$
As $\frac{1}{N}{\mathcal L}_2$ is a spectrum of $\delta_{{\mathcal B}_s}$ for all $s=0,\ldots ,n-1$, the sum on the right-hand side above is equal to $1$.   Hence,  Theorem \ref{cth2}  \rm{(ii)} implies that $\Lambda$ is a spectrum of $\mu_{N, \D}$.

Next,  we  prove the necessity by contradiction. Suppose that $\Lambda$ is a spectrum of $\mu_{N,\mathcal{D}}$, yet there exists some  $\xi_0\in \R$ such that
$$Q_{\Gamma}(\xi_0)\neq \frac{1}{n}\sum_{s=0}^{n-1}\left|M_{\mathcal{B}_{s}}(\xi_0)\right|^{2}.$$
According to Lemma \ref{cprop2}, the orthogonality of $\La$ implies that
\begin{equation*}
Q_{\Gamma}(\xi)\le\frac{1}{n}\sum_{s=0}^{n-1}\left|M_{\mathcal{B}_{s}}(\xi)\right|^{2}, \qquad \forall\ \xi\in\R.
\end{equation*}
In particular,
\begin{equation*}
Q_{\Gamma}(\xi_0)< \frac{1}{n}\sum_{s=0}^{n-1}\left|M_{\mathcal{B}_{s}}(\xi_0)\right|^{2}.
\end{equation*}
Thus, 
 \begin{eqnarray*}
 Q_{\La}(\xi_0)&=&\sum_{\ell_2\in\mathcal{L}_{2}}\sum_{\gamma\in\Gamma}\left|\widehat{\mu}_{N, \D}\left(\xi_0+\frac{\ell_2}{N}+\gamma\right)\right|^2\\
 &=&\sum_{\ell_2\in\mathcal{L}_{2}}Q_{\Gamma}\left(\xi_0+\frac{\ell_{2}}{N}\right)\\
&=&Q_{\Gamma}\left(\xi_0\right)+\sum_{\ell_2\in\mathcal{L}_{2}\setminus\{0\}}Q_{\Gamma}\left(\xi_0+\frac{\ell_{2}}{N}\right)\\
 &<&\frac{1}{n}\sum_{s=0}^{n-1}\left|M_{\mathcal{B}_{s}}(\xi_0)\right|^{2}
 +\sum_{\ell_2\in\mathcal{L}_{2}\setminus\{0\}}\frac{1}{n}
 \sum_{s=0}^{n-1}\left|M_{\mathcal{B}_{s}}
 \left(\xi_0+\frac{\ell_{2}}{N}\right)\right|^{2}\\
 &=& \frac1n\sum_{s=0}^{n-1}\sum_{\ell_2\in\mathcal{L}_{2}}\left|M_{\mathcal{B}_{s}}
 \left(\xi_0+\frac{\ell_{2}}{N}\right)\right|^{2}\\
&=& 1,
 \end{eqnarray*}
where the last equality is true because $\frac{1}{N}{\mathcal L}_2$ is a spectrum of $\delta_{{\mathcal B}_s}$ for all $s=0,...,n-1$. By Theorem \ref{cth2} {\rm(ii)}, $\La$ is not a spectrum of $\mu_{N,\mathcal{D}}$, which yields a contradiction. Hence
$$Q_{\Gamma}(\xi)= \frac{1}{n}\sum_{s=0}^{n-1}\left|M_{\mathcal{B}_{s}}(\xi)\right|^{2}, \qquad \forall\ \xi\in\R.$$
We have completed the proof.
\qed

\bigskip

\subsection{Ruelle transfer operator}
Recall that the self-replicating translation set $\Lambda\subset \frac{\mathbb{Z}}{N}$ has the form $\Lambda=\frac{{\mathcal L}_2}{N}\oplus\Gamma$ where 
$$\Gamma={\mathcal L}_1\oplus N\Lambda={\mathcal L}_1\oplus{\mathcal L}_2 \oplus N\Gamma={\mathcal L}\oplus N\Gamma.$$
Then $\Gamma$ is also a self-replicating translation set. 
The previous subsection asserts that such a spectrum of $\mu_{N,\D}$ can be constructed if we can establish the identity $$Q_{\Gamma}(\xi)
=\frac{1}{n}\sum_{s=0}^{n-1}
\left|M_{\mathcal{B}_{s}}(\xi)\right|^{2}, \qquad \forall \xi\in\R.$$
Unfortunately, a common zero for all  $M_{\mathcal{B}_{s}}$ will create an obstacle.  To address this issue, we apply the technique introduced by the first-named author and Lai \cite{AnLai2023} to eliminate these common zeros. For each $s=0,1,\ldots,n-1$, write  $$M_{\mathcal{B}_s}(\xi)=\frac{1}{\#\mathcal{B}_s}P_{\mathcal{B}_{s}}(e(-\xi)) \quad \text{where} \quad P_{\mathcal{B}_s}(x)=\sum_{b\in\mathcal{B}_s}x^{b}\in \mathbb{Z}[x].$$
For each $s$,  if $e(-\theta)$ is a root of $P_{\mathcal{B}_{s}}$, then its minimal  polynomial $F_{\theta}(x)\in \mathbb{Z}[x]$ divides $ P_{\mathcal{B}_{s}}(x)$.
Define the set of common roots
$$Z=\left\{e(-\theta): P_{\mathcal{B}_{s}}(e(-\theta))=0, \forall s=0,1,\ldots,n-1\right\}.$$
It is clear that $Z$ is a finite set.
For each $e(-\theta)\in Z$ and $s\in\{0,1,\ldots,n-1\}$, denote $$k_{\theta,s}=\max\left\{k:F^{k}_{\theta}(x) \mid P_{\mathcal{B}_{s}}(x)\right\},$$ and set
$$k_{\theta}=\min\left\{k_{\theta,s}:  s=0,1,\ldots,n-1\right\}.$$\\
Under the above notation, define the polynomial
$$F(x)=\Pi_{e(-\theta)\in Z}F^{k_{\theta}}_{\theta}(x).$$
Then $F(x)$ divides each $P_{\mathcal{B}_{s}}(x)$, so for each  $s=0,1,\ldots, n-1$, we may write $$P_{\mathcal{B}_{s}}(x)=F(x)\widetilde{P}_{\mathcal{B}_{s}}(x)\quad \text{where}\quad\widetilde{P}_{\mathcal{B}_{s}}(x)\in \mathbb{Z}[x].$$
Define $$\widetilde{M}_{\mathcal{B}_{s}}(\xi)=\frac{1}{\#\mathcal{B}_{s}}\widetilde{P}_{\mathcal{B}_{s}}(e(-\xi)).$$  Since the common zeros have been completely factored out, the following function is strictly positive:
 \begin{equation}\label{eq.geq.0}
  \frac{1}{n}\sum_{s=0}^{n-1}
 \left|\widetilde{M}_{\mathcal{B}_{s}}(\xi)\right|^2>0,\qquad \forall\xi\in\mathbb{C}.
 \end{equation}
Combining this with the definition of $\D$, we  easily obtain
 \begin{equation}\label{eq.tra}
   M_{\mathcal{B}_{s}}(\xi)=F(e(-\xi))\widetilde{M}_{\mathcal{B}_{s}}(\xi)\;
\quad \text{and}\; \quad
M_{\frac{\mathcal{D}}{N}}(\xi)=F(e(-\xi))\widetilde{M}_{\frac{\mathcal{D}}{N}}(\xi),
 \end{equation}
where $F(e(-\xi))$ and $\widetilde{M}_{\mathcal{B}_{s}}(\xi)$ are $1-$periodic functions, and
\begin{equation*}
\widetilde{M}_{\frac{\mathcal{D}}{N}}(\xi)=\frac{1}{n}\sum_{s=0}^{n-1}e\left( -\frac{a_{s}}{N}\xi\right)\widetilde{M}_{\mathcal{B}_{s}}(\xi).
\end{equation*}
 
The following identity, due to the first-named author and Lai \cite{AnLai2023}, will be useful in the subsequent analysis.
\begin{lem}[\!{\cite[Lemma 4.1]{AnLai2023}}]\label{clem1}
 Let $\left(N,\mathcal{D},\mathcal{L}\right)$ be a product-form Hadamard triple, where $\mathcal{D}$ is given as in \eqref{eq6}. For each $s=0,1,\ldots,n-1$, let $P_{s}(\xi)$ be a $1-$periodic function and define
 $$\mathbf{P}(\xi)=\frac{1}{n}\sum_{s=0}^{n-1}e\left(-\frac{a_{s}\xi}{N}\right)P_{s}(\xi).$$
 Then for each $i=0, 1, \ldots, n-1$,
 $$\sum_{\ell\in\mathcal{L}}\left|\mathbf{P}(\xi+\ell)\right|^2
 \left|M_{\mathcal{B}_{i}}(\phi_{\ell}(\xi))\right|^2
 =\frac{1}{n}\sum_{s=0}^{n-1}\left|P_{s}(\xi)\right|^2.$$
\end{lem}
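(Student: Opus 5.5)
The plan is a direct expansion: everything reduces to the orthogonality relations of the single Hadamard triple $(N,\A\oplus\B_i,{\mathcal L})$ from Definition \ref{defi1}(ii). Fix $i$ and write $m=\#\B_i$, so that $\#{\mathcal L}=\#(\A\oplus\B_i)=nm$.

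\emph{Step 1: remove the dependence on $\ell$ inside $P_s$.} Since ${\mathcal L}\subset\Z$ and each $P_s$ is $1$-periodic, we have $P_s(\xi+\ell)=P_s(\xi)$. Hence
$$\mathbf P(\xi+\ell)=\frac1n\sum_{s=0}^{n-1}e\Big(-\frac{a_s(\xi+\ell)}{N}\Big)P_s(\xi).$$
We also have $M_{\B_i}(\phi_\ell(\xi))=\frac1m\sum_{b\in\B_i}e\big(-\frac{b(\xi+\ell)}{N}\big)$.

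\emph{Step 2: combine the two factors.} Multiplying gives
$$\mathbf P(\xi+\ell)\,M_{\B_i}(\phi_\ell(\xi))=\frac{1}{nm}\sum_{s}\sum_{b\in\B_i}e\Big(-\frac{(a_s+b)(\xi+\ell)}{N}\Big)P_s(\xi).$$
This is a sum indexed by the digits $d=a_s+b\in\A\oplus\B_i$, which are all distinct.

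\emph{Step 3: expand the square and sum over $\ell$.} Taking the modulus squared produces a double sum over pairs $d=a_s+b$ and $d'=a_t+b'$. Each term is
$$\frac{1}{n^2m^2}\,e\Big(-\frac{(d-d')\xi}{N}\Big)\,P_s(\xi)\overline{P_t(\xi)}\,\sum_{\ell\in{\mathcal L}}e\Big(-\frac{(d-d')\ell}{N}\Big).$$
Because $(N,\A\oplus\B_i,{\mathcal L})$ is a Hadamard triple, the columns of the matrix in Definition \ref{defi1}(ii) are orthogonal. Therefore the inner sum over $\ell$ equals $nm\,\delta_{d,d'}$.

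\emph{Step 4: read off the diagonal.} The direct-sum condition makes $d=d'$ equivalent to $s=t$ and $b=b'$. On the diagonal the phase $e(-(d-d')\xi/N)$ equals $1$. The surviving terms therefore total
$$\frac{nm}{n^2m^2}\cdot m\sum_s|P_s(\xi)|^2=\frac1n\sum_s|P_s(\xi)|^2,$$
which is the claimed identity.

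The only point needing care is Step 4: we must use the uniqueness of the representation $d=a_s+b$ in $\A\oplus\B_i$. This is what guarantees that the cross terms with $s\neq t$ vanish even when the $\B_s$ differ, and it is exactly what condition (ii) of Definition \ref{defi1} supplies.
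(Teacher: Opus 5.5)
Your argument is correct. After using $\mathcal L\subset\Z$ and the $1$-periodicity of $P_s$, the product $\mathbf P(\xi+\ell)M_{\B_i}(\phi_\ell(\xi))$ becomes a trigonometric sum over the digits of $\A\oplus\B_i$ with coefficients $P_s(\xi)$. Unitarity of the square matrix of the Hadamard triple $(N,\A\oplus\B_i,\mathcal L)$ (orthonormality of its rows indexed by $d$, equivalently of its columns) then yields exactly $\frac1n\sum_s|P_s(\xi)|^2$.

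The paper gives no proof of this lemma and only quotes it from An--Lai. Your expansion is the standard Parseval/unitarity computation, the same kind the paper itself uses in the proof of Lemma~\ref{cprop1}, so nothing is missing.
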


\bigskip

 Defining the following function
\begin{align}\label{eq-Q1}
\widetilde{Q}_{\Gamma}(\xi)
:= \frac{\sum_{\ell\in \mathcal{L}}\left|\widetilde{M}_{\mathcal{D}}\left(\phi_{\ell}(\xi)\right)\right|^2 \cdot Q_{\Gamma}\left(\phi_{\ell}(\xi)\right)}
{\frac{1}{n}\sum_{s=0}^{n-1}
\left|\widetilde{M}_{\mathcal{B}_{s}}(\xi)\right|^2},\qquad \xi\in\R.
\end{align}
It is well-defined by \eqref{eq.geq.0}. In the following, we establish a criterion based on $\widetilde{Q}_{\Gamma}(\xi)$ to determine the orthogonality and completeness of $\Lambda=\frac{{\mathcal L}_2}{N}\oplus \Gamma$. Before doing this, we discuss the relation between  $\widetilde{Q}_{\Gamma}(\xi)$  and  ${Q}_{\Gamma}(\xi)$.

\bigskip

\begin{lem}\label{lem-Q}
With the above notation, suppose  that $\Gamma\subset\Z$ and $\Gamma={\mathcal L}\oplus N\Gamma$. Then for all $\xi\in\R$,
\begin{eqnarray*}
Q_{\Gamma}\left(\xi\right)=\left( \frac{1}{n}\sum_{s=0}^{n-1}
\left| {M}_{\mathcal{B}_{s}}
\left(\xi\right)\right|^2\right)
\widetilde{Q}_{\Gamma}
\left(\xi\right).
\end{eqnarray*}
\end{lem}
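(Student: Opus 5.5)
The plan is to compute $Q_\Gamma$ directly from the self-replicating structure $\Gamma={\mathcal L}\oplus N\Gamma$ and the one-step refinement identity of the self-similar measure. We then recognize the resulting expression as the numerator of $\widetilde{Q}_\Gamma$ in \eqref{eq-Q1}, multiplied by the common factor $|F(e(-\xi))|^2$. Since the sum is direct, every element of $\Gamma$ is written uniquely as $\ell+N\gamma$ with $\ell\in{\mathcal L}$ and $\gamma\in\Gamma$. All terms are nonnegative, so we may rearrange freely:
\begin{equation*}
Q_\Gamma(\xi)=\sum_{\ell\in{\mathcal L}}\sum_{\gamma\in\Gamma}\left|\widehat{\mu}_{N,\D}(\xi+\ell+N\gamma)\right|^2 .
\end{equation*}

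The next step uses $\widehat{\mu}_{N,\D}(x)=M_{\frac{\D}{N}}(x)\,\widehat{\mu}_{N,\D}(x/N)$, which follows from the infinite product formula. With $x=\xi+\ell+N\gamma$ we have $x/N=\phi_\ell(\xi)+\gamma$. The key observation is that $M_{\frac{\D}{N}}(\xi+\ell+N\gamma)$ does not depend on $\gamma$. Indeed, writing
\begin{equation*}
M_{\frac{\D}{N}}(x)=\frac1n\sum_{s=0}^{n-1}e\left(-\frac{a_s x}{N}\right)M_{\B_s}(x),
\end{equation*}
we note that $M_{\B_s}$ is $1$-periodic and $a_s\gamma\in\Z$. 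Hence $M_{\frac{\D}{N}}(\xi+\ell+N\gamma)=M_{\frac{\D}{N}}(\xi+\ell)$. Summing over $\gamma$ then gives
\begin{equation*}
Q_\Gamma(\xi)=\sum_{\ell\in{\mathcal L}}\left|M_{\frac{\D}{N}}(\xi+\ell)\right|^2 Q_\Gamma(\phi_\ell(\xi)).
\end{equation*}

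Now apply the factorization \eqref{eq.tra}. Since $\ell\in\Z$ and $F(e(-\cdot))$ is $1$-periodic, we get $M_{\frac{\D}{N}}(\xi+\ell)=F(e(-\xi))\,\widetilde{M}_{\frac{\D}{N}}(\xi+\ell)$. Here $\widetilde{M}_{\frac{\D}{N}}(\xi+\ell)$ is exactly the quantity written $\widetilde{M}_{\D}(\phi_\ell(\xi))$ in \eqref{eq-Q1}, under the convention $\widetilde{M}_{\D}(\eta)=\widetilde{M}_{\frac{\D}{N}}(N\eta)$. Therefore
\begin{equation*}
Q_\Gamma(\xi)=|F(e(-\xi))|^2\sum_{\ell\in{\mathcal L}}\left|\widetilde{M}_{\D}(\phi_\ell(\xi))\right|^2 Q_\Gamma(\phi_\ell(\xi)).
\end{equation*}
On the other side, \eqref{eq.tra} also gives $\frac1n\sum_s|M_{\B_s}(\xi)|^2=|F(e(-\xi))|^2\cdot\frac1n\sum_s|\widetilde{M}_{\B_s}(\xi)|^2$. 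The denominator in \eqref{eq-Q1} is strictly positive by \eqref{eq.geq.0}, so multiplying it by $\widetilde{Q}_\Gamma(\xi)$ cancels it. This shows that $\bigl(\frac1n\sum_s|M_{\B_s}(\xi)|^2\bigr)\widetilde{Q}_\Gamma(\xi)$ equals the same expression, which proves the identity.

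The only delicate point is bookkeeping. One must keep $\frac{\D}{N}$ versus $\D$ and the argument scaling of $\widetilde{M}$ consistent with the definition in \eqref{eq-Q1}. One must also make sure that the periodicity argument is applied to the $\B_s$ part and the $e(-a_s x/N)$ part separately: the latter is not $1$-periodic, but it is $N$-periodic, and the shift is $N\gamma$. Everything else is a direct computation.
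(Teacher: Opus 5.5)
Your proposal is correct and follows the paper's proof step for step. You expand $Q_\Gamma$ via $\Gamma={\mathcal L}\oplus N\Gamma$ and apply the refinement identity $\widehat{\mu}_{N,\D}(x)=M_{\frac{\D}{N}}(x)\,\widehat{\mu}_{N,\D}(x/N)$. You then factor out the $1$-periodic $|F(e(-\xi))|^2$ via \eqref{eq.tra} and compare with \eqref{eq-Q1}. You also make explicit two points the paper leaves implicit: why $M_{\frac{\D}{N}}(\xi+\ell+N\gamma)$ does not depend on $\gamma$, and the convention $\widetilde{M}_{\D}(\phi_\ell(\xi))=\widetilde{M}_{\frac{\D}{N}}(\xi+\ell)$.
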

\begin{proof}
The equation $\Gamma=\mathcal{L} \oplus N\Gamma$ implies that
\begin{eqnarray*}
Q_{\Gamma}\left(\xi\right)
&=&\sum_{\gamma\in \Gamma}\left|\widehat{\mu}_{N,\mathcal{D}}\left(\xi+\gamma\right)\right|^2\nonumber\\
&=&\sum_{\ell\in{\mathcal L}}\sum_{\gamma\in\Gamma}\left|M_{\frac{\mathcal{D}}{N}}\left(\xi+\ell+N\gamma\right)\right|^2\left|\widehat{\mu}_{N,\mathcal{D}}\left(\frac{\xi+\ell+N\gamma}{N}\right)\right|^2\nonumber\\
&=&\sum_{\ell\in{\mathcal L}}\left|M_{\frac{\mathcal{D}}{N}}\left(\xi+\ell\right)\right|^2Q_{\Gamma}(\phi_{\ell}(\xi))\nonumber\\
&=&\sum_{\ell\in{\mathcal L}}\left|F(e(-\xi-\ell))\right|^2\cdot \left|\widetilde{M}_{\frac{\mathcal{D}}{N}}\left(\xi+\ell\right)\right|^2Q_{\Gamma}(\phi_{\ell}(\xi))\nonumber\\
&=&\left|F(e(-\xi))\right|^2\cdot \sum_{\ell\in{\mathcal L}}\left|\widetilde{M}_{\mathcal{D}}\left(\phi_{\ell}(\xi)\right)\right|^2Q_{\Gamma}(\phi_{\ell}(\xi)),
\end{eqnarray*}
where the last identity follows from the fact that the function $F\left(e(-\xi)\right)$ is $1-$periodic. Combining this with the expression of $\widetilde{Q}_{\Gamma}(\xi)$ in \eqref{eq-Q1}, we obtain 
\begin{eqnarray*}
Q_{\Gamma}\left(\xi\right)
&=&\left|F(e(-\xi))\right|^2\cdot\left( \frac{1}{n}\sum_{s=0}^{n-1}\left|\widetilde{M}_{\mathcal{B}_{s}}(\xi)\right|^2\right) \widetilde{Q}_{\Gamma}(\xi)\\
&=&\left( \frac{1}{n}\sum_{s=0}^{n-1}\left|{M}_{\mathcal{B}_{s}}(\xi)\right|^2\right) \widetilde{Q}_{\Gamma}(\xi).
\end{eqnarray*} The proof is finished. 
\end{proof}

\bigskip

 We next show that the function $\widetilde{Q}_{\Gamma}$  is a fixed point of  some  Ruelle transfer operator  $\mathcal{R}$, which will be introduced below, that is,
\begin{equation*}
\mathcal{R}\left(\widetilde{Q}_{\Gamma}\right)=\widetilde{Q}_{\Gamma}.
\end{equation*} Define a class of functions as
 \begin{equation}\label{eq.varphi}
\mathbf{M}_{\ell}(\xi):
=\left|\widetilde{M}_{\mathcal{D}}\left(\phi_{\ell}(\xi)\right)\right|^2
\frac{\sum_{j=0}^{n-1}\left|M_{\mathcal{B}_{j}}\left(\phi_{\ell}(\xi)\right)\right|^2}
{\sum_{s=0}^{n-1}\left|\widetilde{M}_{\mathcal{B}_{s}}(\xi)\right|^2}, \qquad \ell\in\mathcal{L}.
 \end{equation}

\bigskip

\begin{lem}\label{lem1}
Suppose that $\Gamma\subset\Z$ satisfies $\Gamma={\mathcal L} \oplus N\Gamma$. Thus,  for all $\xi\in\R$, we have 
\begin{equation}\label{eq.rul}
\widetilde{Q}_{\Gamma}(\xi)=\sum_{\ell\in\mathcal{L}}\mathbf{M}_{\ell}(\xi)\widetilde{Q}_{\Gamma}\left(\phi_{\ell}(\xi)\right)
\end{equation}
and
$\sum_{\ell\in\mathcal{L}}\mathbf{M}_{\ell}(\xi)
=1$.
\end{lem}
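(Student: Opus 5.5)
Both claims follow from Lemma \ref{lem-Q}, Lemma \ref{clem1} and the factorization \eqref{eq.tra}. Throughout, write $S(\xi)=\frac1n\sum_{s}|M_{\mathcal B_s}(\xi)|^2$ and $\widetilde S(\xi)=\frac1n\sum_s|\widetilde M_{\mathcal B_s}(\xi)|^2$, so that $S(\xi)=|F(e(-\xi))|^2\widetilde S(\xi)$ and $\widetilde S>0$ everywhere by \eqref{eq.geq.0}. The notation $\widetilde M_{\mathcal D}(\phi_\ell(\xi))$ in \eqref{eq-Q1} means $\widetilde M_{\frac{\mathcal D}{N}}(\xi+\ell)$. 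The factors $\frac1n$ cancel between numerator and denominator of $\mathbf M_\ell$, so
$$\mathbf M_\ell(\xi)=\bigl|\widetilde M_{\frac{\mathcal D}{N}}(\xi+\ell)\bigr|^2\,\frac{S(\phi_\ell(\xi))}{\widetilde S(\xi)}.$$

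\emph{Fixed-point identity.} Apply Lemma \ref{lem-Q} at the point $\phi_\ell(\xi)$ to get $Q_\Gamma(\phi_\ell(\xi))=S(\phi_\ell(\xi))\,\widetilde Q_\Gamma(\phi_\ell(\xi))$. Substituting this into the numerator of \eqref{eq-Q1} gives
$$\widetilde Q_\Gamma(\xi)=\frac{1}{\widetilde S(\xi)}\sum_{\ell\in\mathcal L}\bigl|\widetilde M_{\frac{\mathcal D}{N}}(\xi+\ell)\bigr|^2 S(\phi_\ell(\xi))\,\widetilde Q_\Gamma(\phi_\ell(\xi)),$$
which is exactly \eqref{eq.rul}. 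This step is pure bookkeeping; the only point to check is that Lemma \ref{lem-Q} holds at every real point, including points where $S$ vanishes. It does: both sides are defined pointwise, and $\widetilde Q_\Gamma$ is defined everywhere because $\widetilde S>0$.

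\emph{Normalization $\sum_\ell\mathbf M_\ell\equiv1$.} This amounts to showing
$$\sum_{\ell\in\mathcal L}\bigl|\widetilde M_{\frac{\mathcal D}{N}}(\xi+\ell)\bigr|^2\,\frac1n\sum_{j}\bigl|M_{\mathcal B_j}(\phi_\ell(\xi))\bigr|^2=\widetilde S(\xi).$$
Apply Lemma \ref{clem1} with $P_s=\widetilde M_{\mathcal B_s}$. These functions are $1$-periodic because $\widetilde P_{\mathcal B_s}\in\mathbb Z[x]$ is a Laurent polynomial, and then $\mathbf P(\xi)=\widetilde M_{\frac{\mathcal D}{N}}(\xi)$. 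For each fixed $i$ the lemma gives
$$\sum_\ell\bigl|\widetilde M_{\frac{\mathcal D}{N}}(\xi+\ell)\bigr|^2\,\bigl|M_{\mathcal B_i}(\phi_\ell(\xi))\bigr|^2=\widetilde S(\xi).$$
Averaging over $i$ yields the desired identity, and dividing by $\widetilde S(\xi)>0$ gives $\sum_\ell\mathbf M_\ell(\xi)=1$.

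The main obstacle is confirming that Lemma \ref{clem1} applies to the reduced functions $\widetilde M_{\mathcal B_s}$ rather than only to the original $M_{\mathcal B_s}$. Its hypotheses require only $1$-periodicity of the $P_s$, so this should be fine. If instead its proof secretly uses the specific form of the $M_{\mathcal B_s}$, I would fall back to the direct computation. That computation expands $|\mathbf P(\xi+\ell)|^2$, sums over $\ell=\ell_1+\ell_2$, and uses the Hadamard property of $(N,\mathcal A,\mathcal L_1)$, which kills cross terms $a_s\ne a_{s'}$ after summing over $\ell_1$. It also uses $\sum_{\ell_2}|M_{\mathcal B_i}((\xi+\ell_1+\ell_2)/N)|^2=1$, which comes from the Hadamard triple $(N,\mathcal B_i,\mathcal L_2)$ and Lemma \ref{lem.ha}(iii).
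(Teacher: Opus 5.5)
Your proof is correct and follows the paper's argument: substitute Lemma \ref{lem-Q} at $\phi_\ell(\xi)$ into \eqref{eq-Q1} to get \eqref{eq.rul}, then apply Lemma \ref{clem1} with $P_s=\widetilde M_{\mathcal B_s}$, which needs only $1$-periodicity, and sum over $i$ before dividing by $\widetilde S(\xi)>0$. One caveat about your fallback, which you never actually need: the cross terms $s\neq s'$ are not killed by $(N,\mathcal A,\mathcal L_1)$ alone, because $|M_{\mathcal B_i}((\xi+\ell_1+\ell_2)/N)|^2$ also depends on $\ell_1$; what makes them vanish is the Hadamard triple $(N,\mathcal A\oplus\mathcal B_i,\mathcal L)$ from Definition \ref{defi1}(ii).
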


\begin{proof}
 Using Lemma \ref{lem-Q}, we obtain
$$Q_{\Gamma}\left(\phi_{\ell}(\xi)\right)=\left( \frac{1}{n}\sum_{j=0}^{n-1}
\left| {M}_{\mathcal{B}_{j}}
\left(\phi_{\ell}(\xi)\right)\right|^2\right)
\widetilde{Q}_{\Gamma}
\left(\phi_{\ell}(\xi)\right),\qquad  \xi\in\R,~\ell\in\mathcal{L}.
$$
Substituting it into the numerator on the right-hand side of equation \eqref{eq-Q1}  and interchanging the order of summation yields
\begin{eqnarray*}
 \widetilde{Q}_{\Gamma}(\xi)
 &=&\frac1{\frac{1}{n}\sum_{s=0}^{n-1}\left|\widetilde{M}_{\mathcal{B}_{s}}\left(\xi\right)\right|^2}\sum_{\ell\in\mathcal{L}}
 \left|\widetilde{M}_{\mathcal{D}}
 (\phi_{\ell}(\xi))\right|^2
 \cdot \left(\frac1n\sum_{j=0}^{n-1}\left|M_{\mathcal{B}_{j}}
 \left(\phi_{\ell}(\xi)\right)\right|^2\right)
 \cdot \widetilde{Q}_{\Gamma}\left(\phi_{\ell}(\xi)\right)\\
 &=&\sum_{\ell\in\mathcal{L}}
 \left|\widetilde{M}_{\mathcal{D}}
 (\phi_{\ell}(\xi))\right|^2
 \cdot \left(\frac{\sum_{j=0}^{n-1}\left|M_{\mathcal{B}_{j}}
 \left(\phi_{\ell}(\xi)\right)\right|^2}{\sum_{s=0}^{n-1}\left|\widetilde{M}_{\mathcal{B}_{s}}\left(\xi\right)\right|^2}\right)
 \cdot \widetilde{Q}_{\Gamma}\left(\phi_{\ell}(\xi)\right)\\
 &=& \sum_{\ell\in\mathcal{L}}\mathbf{M}_{\ell}(\xi)
\widetilde{Q}_{\Gamma}
\left(\phi_{\ell}(\xi)\right).
\end{eqnarray*}
This proves the equation \eqref{eq.rul}.

In Lemma \ref{clem1}, taking $$\mathbf{P}(\xi):=\widetilde{M}_{\frac{\mathcal{D}}{N}}(\xi)=\frac{1}{n}\sum_{s=0}^{n-1}e\left( -\frac{a_{s}}{N}\xi\right)\widetilde{M}_{\mathcal{B}_{s}}(\xi).$$
Then
$$\sum_{\ell\in\mathcal{L}}\left|\widetilde{M}_{\mathcal{D}}\left(\phi_{\ell}(\xi)\right)\right|^2
\left|M_{\mathcal{B}_{i}}\left(\phi_{\ell}(\xi)\right)\right|^2=\frac{1}{n}\sum_{s=0}^{n-1}\left|\widetilde{M}_{\mathcal{B}_{s}}(\xi)\right|^2.$$
By Fubini's theorem, we obtain that
\begin{eqnarray*}\label{eq7}
\sum_{\ell\in\mathcal{L}}\mathbf{M}_{\ell}(\xi)
&=&\sum_{i=0}^{n-1}
\frac{\sum_{\ell\in\mathcal{L}}\left|\widetilde{M}_{\mathcal{D}}\left(\phi_{\ell}(\xi)\right)\right|^2
\left|M_{\mathcal{B}_{i}}\left(\phi_{\ell}(\xi)\right)\right|^2}
{\sum_{j=0}^{n-1}\left|\widetilde{M}_{\mathcal{B}_{j}}(\xi)\right|^2}\\
&=&\sum_{i=0}^{n-1}
\frac{\frac{1}{n}\sum_{s=0}^{n-1}\left|\widetilde{M}_{\mathcal{B}_{s}}(\xi)\right|^2}
{\sum_{j=0}^{n-1}\left|\widetilde{M}_{\mathcal{B}_{j}}(\xi)\right|^2} \\
&=&1.
\end{eqnarray*}
We have completed the proof.
\end{proof}

\bigskip

 Lemma \ref{lem1}  tells us that, for each $\xi\in \mathbb{R}$, the nonnegative values  $\left\{\mathbf{M}_{\ell}(\xi)\right\}_{\ell\in\mathcal{L}}$ form a set of probability weights on $\mathcal{L}$. We shall define the associated \textit{Ruelle transfer operator} $\mathcal{R}$ acting on a function $f$ by
\begin{equation*}
\mathcal{R}(f)(\xi):=\sum_{\ell\in\mathcal{L}}\mathbf{M}_{\ell}(\xi)f\left(\phi_{\ell}(\xi)\right).
\end{equation*}
Consequently, $\mathcal{R}(1)=1$ for the constant function $1.$ 
The following provides a criterion on $\widetilde{Q}_{\Gamma}(\xi)$ for determining the orthogonality and completeness of $\Lambda=\frac{{\mathcal L}_2}{N}\oplus\Gamma$.

\bigskip

\begin{theo}\label{fomula2}
Let $\mu_{N,\D}$ be a self-similar measure generated by the product-form Hadamard triple $(N, \D,\mathcal{L})$ given in \eqref{eq6}. Suppose that $\Gamma\subset\Z$ satisfies $\Gamma={\mathcal L} \oplus N\Gamma$. Then
\begin{enumerate}
\item[(i)] $\Lambda=\frac{{\mathcal L}_2}{N}\oplus \Gamma$ is an orthonormal set of $\mu_{N, \D}$ if and only if $\widetilde{Q}_{\Gamma}(\xi)\leq1$ for $\xi\in\R$. In this case $\widetilde{Q}_{\Gamma}(\xi)$ is an entire function on $\C$.
\item[(ii)] $\Lambda=\frac{{\mathcal L}_2}{N}\oplus\Gamma$ is a spectrum of $\mu_{N, \D}$ if and only if $\widetilde{Q}_{\Gamma}(\xi)\equiv1$ on $\R$.
\end{enumerate}
\end{theo}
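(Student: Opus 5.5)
The plan is to reduce both parts to the earlier criteria, Lemma \ref{cprop2} and Theorem \ref{fomula1}, through the factorization of Lemma \ref{lem-Q}:
\[
Q_\Gamma(\xi)=\Bigl(\frac1n\sum_{s=0}^{n-1}|M_{\mathcal B_s}(\xi)|^2\Bigr)\widetilde Q_\Gamma(\xi).
\]
Write $G(\xi)=\frac1n\sum_s|M_{\mathcal B_s}(\xi)|^2=|F(e(-\xi))|^2\,\frac1n\sum_s|\widetilde M_{\mathcal B_s}(\xi)|^2$. The function $G$ is a nonnegative trigonometric polynomial. It vanishes only on the finite (mod $1$) set of $\xi$ with $e(-\xi)\in Z$, which is a discrete set $E\subset\R$.

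For (i), first suppose $\widetilde Q_\Gamma\le 1$ on $\R$. Then $Q_\Gamma=G\widetilde Q_\Gamma\le G$, and Lemma \ref{cprop2} shows that $\Lambda$ is orthonormal. Conversely, if $\Lambda$ is orthonormal, Lemma \ref{cprop2} gives $G\widetilde Q_\Gamma\le G$, so $\widetilde Q_\Gamma\le1$ on $\R\setminus E$.

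To extend this to $E$, I would use continuity of $\widetilde Q_\Gamma$. Its numerator in \eqref{eq-Q1} is a finite sum of products of trigonometric polynomials with $Q_\Gamma\circ\phi_\ell$. Since $\Gamma$ is orthonormal for $\mu_{N,\D}$ (because $0\in\mathcal L_2$), Theorem \ref{cth2}(i) says $Q_\Gamma$ extends to an entire function. The denominator $\frac1n\sum_s|\widetilde M_{\mathcal B_s}|^2$ has no zeros on $\R$ by \eqref{eq.geq.0}. Hence $\widetilde Q_\Gamma$ is continuous on $\R$, and $\widetilde Q_\Gamma\le1$ holds everywhere, since $E$ is discrete.

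For the entire-function claim, I would replace $|\cdot|^2$ by the usual holomorphic extension $f(z)\overline{f(\bar z)}$. Then $\widetilde Q_\Gamma$ is a quotient of entire functions, and I would show it is entire in one of two ways:
\begin{itemize}
\item Observe that $\widetilde Q_\Gamma=Q_\Gamma/G$ off $E$, and that the zeros of $G$, viewed as holomorphic functions, are cancelled. Each factor $|M_{\mathcal D/N}(\xi+\ell)|^2$ in the expansion of $Q_\Gamma$ in the proof of Lemma \ref{lem-Q} carries the full factor $|F|^2$.
\item More simply, since \eqref{eq.geq.0} is stated on all of $\C$, the denominator (interpreted holomorphically) has no zeros in $\C$, so the quotient is entire directly.
\end{itemize}
I expect this point, making sense of the positivity \eqref{eq.geq.0} for the holomorphic extension and not just for $|\cdot|^2$ on $\R$, to be the main technical obstacle. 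My fallback is to argue only that $\widetilde Q_\Gamma$ is real-analytic on $\R$, which suffices for the later identity-theorem arguments.

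For (ii), if $\widetilde Q_\Gamma\equiv1$ then $Q_\Gamma\equiv G$, and Theorem \ref{fomula1} gives that $\Lambda$ is a spectrum. Conversely, if $\Lambda$ is a spectrum, Theorem \ref{fomula1} gives $G\widetilde Q_\Gamma=G$ on $\R$. This yields $\widetilde Q_\Gamma=1$ off the discrete set $E$, and by continuity (or analyticity) from part (i), $\widetilde Q_\Gamma\equiv1$ on $\R$.
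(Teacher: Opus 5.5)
Your proof is correct. The sufficiency directions are the same as the paper's, but your necessity directions take a different route.

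The paper never divides by $G$. Assuming $\Lambda$ is orthonormal, it applies Lemma \ref{cprop2} at each point $\phi_\ell(\xi)$, substitutes $Q_\Gamma(\phi_\ell(\xi))\le G(\phi_\ell(\xi))$ into the numerator of \eqref{eq-Q1}, and recognizes the result as $\sum_{\ell\in\mathcal L}\mathbf M_\ell(\xi)$. This sum equals $1$ by Lemma \ref{lem1}, which ultimately rests on the An--Lai identity, Lemma \ref{clem1}. For (ii), the equality from Theorem \ref{fomula1} gives $\widetilde Q_\Gamma(\xi)=\sum_\ell\mathbf M_\ell(\xi)=1$ in the same way. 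The argument is pointwise at every $\xi$, including the zeros of $G$, so it has no exceptional set and needs no continuity.

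You use only the factorization $Q_\Gamma=G\,\widetilde Q_\Gamma$ of Lemma \ref{lem-Q}, get the conclusion off the discrete set $E$, and close the gap by continuity. That continuity is justified: $Q_\Gamma$ is entire once $\Gamma\subset\Lambda$ is orthonormal, and the reduced denominator is positive on $\R$. Your route bypasses the normalization $\sum_\ell\mathbf M_\ell=1$ at the cost of a density step. The paper's route is more direct and exercises exactly the Ruelle normalization that drives the proof of Theorem \ref{th2}.

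Your caution about the entire-function clause is justified. The paper only asserts it from the entireness of $Q_\Gamma$ and of the trigonometric factors. That implicitly reads \eqref{eq.geq.0} as saying the holomorphic extension $z\mapsto\frac1n\sum_s\widetilde M_{\mathcal B_s}(z)\overline{\widetilde M_{\mathcal B_s}(\bar z)}$ is zero-free on $\C$. But this extension is a Laurent polynomial in $e(-z)$, and in general it is nonconstant. For instance, with $\mathcal B_0=\{0,3\}$ and $\mathcal B_1=\{0,5\}$ from the paper's second example, the reduced polynomials are $\Phi_6$ and $\Phi_{10}$. Such a Laurent polynomial therefore vanishes at some non-real $z$. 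Entireness would then also require the numerator to vanish at those points. Your first bullet does not handle this, since it only cancels $|F|^2$, and neither does the paper's one-line justification.

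Your fallback is rigorous. The reduced denominator is positive and $1$-periodic on $\R$, so $\widetilde Q_\Gamma$ is holomorphic on a strip $|\mathrm{Im}\,z|<\delta$. That is all that is used later, namely the finiteness of the minimizing set $X$ in the proof of Theorem \ref{th2}.
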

\begin{proof}
 {\rm(i)}  Assume that $\Lambda=\frac{{\mathcal L}_2}{N}\oplus\Gamma$ is an orthonormal set of $\mu_{N, \D}$. By Lemma \ref{cprop2}, 
$$Q_{\Gamma}\left(\phi_{\ell}(\xi)\right)\le \frac1n\sum_{s=0}^{n-1}\left|M_{\B_s}\left(\phi_{\ell}(\xi)\right)\right|^2$$ holds for all $\xi\in\R$ and $\ell\in{\mathcal L}$, it follows from \eqref{eq.varphi} that 
\begin{eqnarray*}
\widetilde{Q}_{\Gamma}(\xi)
&=& \frac{\sum_{\ell\in \mathcal{L}}\left|\widetilde{M}_{\mathcal{D}}\left(\phi_{\ell}(\xi)\right)\right|^2 \cdot Q_{\Gamma}\left(\phi_{\ell}(\xi)\right)}
{\frac{1}{n}\sum_{s=0}^{n-1}\left|\widetilde{M}_{\mathcal{B}_{s}}(\xi)\right|^2}\\
&\le& \frac{\sum_{\ell\in \mathcal{L}}\left|\widetilde{M}_{\mathcal{D}}\left(\phi_{\ell}(\xi)\right)\right|^2 \cdot \left(\frac1n\sum_{j=0}^{n-1}\left|M_{\B_j}\left(\phi_{\ell}(\xi)\right)\right|^2\right)}
{\frac{1}{n}\sum_{s=0}^{n-1}\left|\widetilde{M}_{\mathcal{B}_{s}}(\xi)\right|^2}\\
&=&\sum_{\ell\in\mathcal{L}}\mathbf{M}_{\ell}(\xi)\\
&=&1 \quad (\text{by  Lemma } \ref{lem1}).
\end{eqnarray*}
Conversely, suppose that $\widetilde{Q}_{\Gamma}(\xi)\leq1$ for all $\xi\in\R$. From Lemma \ref{lem-Q}, we can obtain that
\begin{eqnarray*}
Q_{\Gamma}\left(\xi\right) &=& \left( \frac{1}{n}\sum_{s=0}^{n-1}
\left|{M}_{\mathcal{B}_{s}}
\left(\xi\right)\right|^2\right)
\widetilde{Q}_{\Gamma}
\left(\xi\right)\le \frac{1}{n}\sum_{s=0}^{n-1}
\left|{M}_{\mathcal{B}_{s}}
\left(\xi\right)\right|^2.
\end{eqnarray*}
Hence, from Lemma \ref{cprop2}, $\Lambda=\frac{{\mathcal L}_2}{N}\oplus\Gamma$ is an orthonormal set of $\mu_{N, \D}$.
Since ${Q}_{\Gamma}(\xi), \left|\widetilde{M}_{\mathcal{D}}\left(\phi_{\ell}(\xi)\right)\right|^2$ and $\left|\widetilde{M}_{\B_s}\left(\phi_{\ell}(\xi)\right)\right|^2$ are all entire functions on $\C$,  so is $\widetilde{Q}_{\Gamma}(\xi)$. This proves {\rm(i)}.

\medskip

A similar argument proves {\rm(ii)}  using Theorem \ref{fomula1}, instead of Lemma \ref{cprop2}.
\end{proof}


\section{Proof of Theorems  \ref{th2}, \ref{th3} and \ref{th5}}\label{sec.3}

This section proves Theorems   \ref{th2}, \ref{th3} and \ref{th5}. We begin with two basic  lemmas.

\begin{lem}\label{lem4.1}
Let $\gamma$ be an element in $\Z$. If $M_{\frac{\A\oplus\B_s}{N}}({\gamma})=0$ for each $s=0,1, \ldots, n-1$, then either $M_{\frac{\D}{N}}(\gamma)=0$ or $M_{\frac{\D}{N^2}}(\gamma)=0.$ 
\end{lem}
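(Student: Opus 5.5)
The plan is to factor each mask at the integer $\gamma$ and split into two cases, according to whether the common factor coming from $\A$ vanishes. I will use the direct-sum structure $\A\oplus\B_s$ and the decomposition $\D=\bigcup_{s=0}^{n-1}(a_s+N\B_s)$ from \eqref{eq6}. I will also use the identity $M_{\frac{\D}{N}}(t)=M_{\frac{\A}{N}}(t)$ for $t\in\Z$, which was already observed in the proof of Lemma \ref{cprop1}.

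First, since $\A\oplus\B_s$ is a direct sum, the mask factors for every real $\xi$:
$$M_{\frac{\A\oplus\B_s}{N}}(\xi)=M_{\frac{\A}{N}}(\xi)\,M_{\frac{\B_s}{N}}(\xi).$$
Second, for the integer $\gamma$ we have $M_{\frac{\D}{N}}(\gamma)=\frac1n\sum_{s}e\!\left(-\frac{a_s\gamma}{N}\right)M_{\B_s}(\gamma)$. Because $\B_s\subset\Z$, each $M_{\B_s}(\gamma)=1$, and so $M_{\frac{\D}{N}}(\gamma)=M_{\frac{\A}{N}}(\gamma)$.

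The case split is as follows.

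\emph{Case 1:} $M_{\frac{\A}{N}}(\gamma)=0$. Then $M_{\frac{\D}{N}}(\gamma)=0$ by the second step, and we are done.

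\emph{Case 2:} $M_{\frac{\A}{N}}(\gamma)\neq0$. The hypothesis together with the factorization forces $M_{\frac{\B_s}{N}}(\gamma)=0$ for every $s=0,\ldots,n-1$. Now write
$$M_{\frac{\D}{N^2}}(\gamma)=\frac1n\sum_{s=0}^{n-1}e\!\left(-\frac{a_s\gamma}{N^2}\right)M_{\frac{N\B_s}{N^2}}(\gamma)=\frac1n\sum_{s=0}^{n-1}e\!\left(-\frac{a_s\gamma}{N^2}\right)M_{\frac{\B_s}{N}}(\gamma).$$
Every summand vanishes, so $M_{\frac{\D}{N^2}}(\gamma)=0$.

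There is no real obstacle here. The only points needing care are two bookkeeping facts. The first is that the decomposition of $\D$ uses all $n$ blocks with equal weight $\frac{1}{n}$, which holds because each $\B_s$ has the same cardinality $m$. The second is that the factorization of $M_{\frac{\A\oplus\B_s}{N}}$ uses the distinctness of the sums in $\A\oplus\B_s$.
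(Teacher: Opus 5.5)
Your proposal is correct and is essentially the paper's argument: the paper also uses $M_{\B_s}(\gamma)=1$ to get $M_{\frac{\D}{N}}(\gamma)=M_{\frac{\A}{N}}(\gamma)$, expands $M_{\frac{\D}{N^2}}(\gamma)=\frac1n\sum_{s}e\left(-\frac{a_s\gamma}{N^2}\right)M_{\frac{\B_s}{N}}(\gamma)$, and concludes from $M_{\frac{\A\oplus\B_s}{N}}=M_{\frac{\A}{N}}M_{\frac{\B_s}{N}}$, leaving implicit the case split on whether $M_{\frac{\A}{N}}(\gamma)=0$ that you make explicit. One small addition to your bookkeeping: the equal weighting $\frac1n$ also needs the blocks $a_s+N\B_s$ to be pairwise disjoint, which holds because the $a_s$ are distinct modulo $N$, since $(N,\A,\mathcal{L}_1)$ is a Hadamard triple.
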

\begin{proof}
Since $\gamma\in\Z$ and each $M_{\B_s}$ is $1-$periodic, we have  $M_{\B_s}(\gamma)=1$.  Recall that $\D=\bigcup_{s=0}^{n-1}(a_s+N\B_s)$, we obtain  
$$M_{\frac{\D}{N}}\left(\gamma\right) = \frac1n\sum_{s=0}^{n-1}e\left(-\frac{a_s\gamma}{N}\right)M_{\B_s}(\gamma)=M_{\frac{\A}{N}}(\gamma),$$
and 
\begin{eqnarray*}
 M_{\frac{\D}{N^{2}}}\left(\gamma\right)=\frac1n\sum_{s=0}^{n-1}e\left(-\frac{a_s\gamma}{N^2}\right)M_{\frac{\B_s}{N}}\left(\gamma\right).
 \end{eqnarray*}
 The assertion follows from the assumption and $M_{\frac{\A\oplus\B_s}{N}}({\gamma})=M_{\frac{\A}{N}}({\gamma})M_{\frac{\B_s}{N}}({\gamma})$.
\end{proof}

\bigskip

\begin{lem}\label{prop.rep1}
Suppose that $\Lambda\subset \frac{\mathbb{Z}}{N}$ is a self-replicating translation set with respect to $(N,\widetilde{{\mathcal L}})$, that is,  $\Lambda=\widetilde{{\mathcal L}}\oplus N\Lambda$. Then it can be written as 
\begin{equation*}
  \Lambda=\frac{\mathcal{L}_2}{N}\oplus\Gamma\quad \text{where}\quad\Gamma=\mathcal{L}\oplus N\Gamma.
 \end{equation*}
Moreover, it is an orthonormal set of $\mu_{N,\mathcal{D}}$.
\end{lem}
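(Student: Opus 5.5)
Two things have to be shown: the decomposition of $\Lambda$, and the orthonormality of $\Lambda$ for $\mu_{N,\mathcal{D}}$.

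\emph{Decomposition.} Set $\Gamma:=\mathcal{L}_1\oplus N\Lambda$. Since $\Lambda=\widetilde{\mathcal L}\oplus N\Lambda=\frac{\mathcal{L}_2}{N}+\mathcal{L}_1+N\Lambda$ with all sums distinct, we get $\Lambda=\frac{\mathcal{L}_2}{N}\oplus\Gamma$. Because $\Lambda\subset\frac{\Z}{N}$, we have $N\Lambda\subset\Z$, so $\Gamma\subset\Z$. Substituting $\Lambda=\frac{\mathcal{L}_2}{N}\oplus\Gamma$ back into $\Gamma=\mathcal{L}_1\oplus N\Lambda$ gives $\Gamma=\mathcal{L}_1+\mathcal{L}_2+N\Gamma$. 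Distinctness of the representations follows from the distinctness in $\Lambda=\widetilde{\mathcal L}\oplus N\Lambda$, applied twice. Therefore $\Gamma=\mathcal{L}\oplus N\Gamma$.

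\emph{Orthonormality.} By Theorem \ref{fomula2}(i), or equivalently Lemma \ref{cprop2}, it suffices to show
\[
Q_\Gamma(\xi)\le \tfrac1n\sum_s|M_{\mathcal{B}_s}(\xi)|^2 .
\]
By Lemma \ref{cprop2} this is equivalent to showing that $\Gamma$ (together with $\frac{\mathcal L_2}{N}$) gives pairwise orthogonality. So I will verify directly that for distinct $\lambda,\lambda'\in\Lambda$ we have $\widehat\mu_{N,\mathcal{D}}(\lambda-\lambda')=0$.

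Write $\lambda=\tilde\ell_1+N\tilde\ell_2+\cdots$ with $\tilde\ell_j\in\widetilde{\mathcal L}$, and similarly for $\lambda'$. Iterating $\Lambda=\widetilde{\mathcal L}\oplus N\Lambda$ and using uniqueness, let $k$ be the first index where the digits differ. Then
\[
\lambda-\lambda'=N^{k-1}\bigl(\tilde\ell-\tilde\ell'+N(\eta-\eta')\bigr)
\]
with $\tilde\ell\ne\tilde\ell'$ in $\widetilde{\mathcal L}$ and $\eta,\eta'\in\Lambda$. The infinite product for $\widehat\mu_{N,\mathcal{D}}$ contains the factor $\widehat{\mu_k}$, which is the Fourier transform of $\delta_{\frac{\D}{N}}\ast\cdots\ast\delta_{\frac{\D}{N^k}}$ evaluated at $\lambda-\lambda'$. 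The paper already states that $\widetilde{\bf L}_k$ is a spectrum of $\mu_k$, so the question reduces to whether $\lambda-\lambda'$ is a difference of two distinct elements of some $\widetilde{\bf L}_p$.

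\emph{Main obstacle: the tails.} The expansions of $\lambda,\lambda'$ may have unbounded length, and they may not terminate in zeros if $\Lambda$ is an arbitrary self-replicating set rather than $\Lambda(N,\widetilde{\mathcal L})$. To handle this, I use that $\widehat\mu_{N,\mathcal{D}}(x)=\widehat{\mu_1}(x)\,\widehat\mu_{N,\mathcal{D}}(x/N)$, and reduce by $k-1$ steps to the case $k=1$, i.e.\ to $x=\tilde\ell-\tilde\ell'+N(\eta-\eta')$.

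If the $\mathcal{L}_1$-parts or the $\mathcal{L}_2$-parts of $\tilde\ell,\tilde\ell'$ differ, I compute $M_{\frac{\D}{N}}(x)$ or $M_{\frac{\D}{N^2}}(x)$ directly:
\begin{itemize}
\item Since $N(\eta-\eta')\in\Z$, the relevant factor depends only on the $\widetilde{\mathcal L}$-difference modulo the periods.
\item If the $\mathcal{L}_2$-parts differ, the factor $M_{\mathcal B_s}(\cdot)$ vanishes at $\frac{\ell_2-\ell_2'}{N}$ plus an integer, for every $s$, by the Hadamard triple $(N,\mathcal B_s,\mathcal L_2)$. Hence $M_{\frac{\D}{N}}(x)=0$.
\item If the $\mathcal{L}_2$-parts agree but the $\mathcal{L}_1$-parts differ, then $x\in\Z$, so $M_{\frac{\D}{N}}(x)=M_{\frac{\A}{N}}(x)$. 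This vanishes by the Hadamard triple $(N,\A,\mathcal L_1)$, because $N(\eta-\eta')\in N\cdot\frac{\Z}{N}$; the case $\eta-\eta'\notin\Z$ needs separate checking.
\end{itemize}
The delicate case is when $x\in\Z$ but $N(\eta-\eta')\notin N\Z$. There I would instead use the product-form condition (ii) together with Lemma \ref{lem4.1} to locate a zero at the second level.
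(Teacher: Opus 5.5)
Your decomposition argument is correct and is the paper's. The orthogonality part, however, is not finished. Consider the case where $\tilde\ell=\ell_1+\frac{\ell_2}{N}$ and $\tilde\ell'=\ell_1'+\frac{\ell_2}{N}$ have the same $\mathcal L_2$-part, $\ell_1\neq\ell_1'$, and $\eta-\eta'\notin\Z$. This is the one case where the product-form condition (ii) is needed, and you only say you ``would'' use it together with Lemma \ref{lem4.1}.

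The missing idea is that the $\mathcal L_1$-part of the $k$-th $\widetilde{\mathcal L}$-digit must be paired with the $\mathcal L_2$-part of the \emph{next} digit. Write $\eta=m_1+\frac{m_2}{N}+N\eta_2$ and $\eta'=m_1'+\frac{m_2'}{N}+N\eta_2'$ with $m_i,m_i'\in\mathcal L_i$ and $\eta_2,\eta_2'\in\Lambda$. Then
\[
x=(\ell_1+m_2)-(\ell_1'+m_2')+N\bigl(m_1-m_1'+N(\eta_2-\eta_2')\bigr),
\]
and the bracket is an integer because $\Lambda\subset\frac{\Z}{N}$. Since $\mathcal L_1\oplus\mathcal L_2$ is direct and $\ell_1\neq\ell_1'$, the elements $\ell_1+m_2$ and $\ell_1'+m_2'$ of $\mathcal L$ are distinct. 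The Hadamard triples $(N,\A\oplus\B_s,\mathcal L)$ then give $M_{\frac{\A\oplus\B_s}{N}}(x)=0$ for every $s$. Lemma \ref{lem4.1}, which applies since $x\in\Z$, yields $M_{\frac{\D}{N}}(x)\,M_{\frac{\D}{N^2}}(x)=0$. This settles the whole case $\ell_2=\ell_2'$, $\ell_1\neq\ell_1'$ at once, so your split according to whether $\eta-\eta'\in\Z$ is unnecessary. Your case of distinct $\mathcal L_2$-parts is fine as written.

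The paper avoids this difficulty by using the decomposition you had just proved. It writes $\lambda=\frac{\ell_0}{N}+\gamma$ with $\gamma\in\Gamma=\mathcal L\oplus N\Gamma$ and expands $\gamma$ in $\mathcal L$-digits, which is exactly the regrouping above. Either $\ell_0\neq\ell_0'$, giving a zero of $M_{\frac{\D}{N}}$ via $(N,\B_s,\mathcal L_2)$. Or, with $k$ minimal such that $\gamma\not\equiv\gamma'\pmod{N^k}$, the integer $\frac{\lambda-\lambda'}{N^{k-1}}$ is congruent mod $N$ to a difference of distinct elements of $\mathcal L$, and Lemma \ref{lem4.1} finishes. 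Defining $k$ through congruences of integers also makes its existence automatic. In your version you should add that the first differing index exists because agreement of the first $k$ digits forces $\lambda-\lambda'\in N^{k-1}\Z\setminus\{0\}$. Your preliminary detour through Theorem \ref{fomula2}, Lemma \ref{cprop2} and $\widetilde{\bf L}_p$ only leads back to the definition of orthogonality and can be dropped.
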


\begin{proof} 
Denote $\Gamma=\mathcal{L}_1\oplus N\La$.  It follows from   $\widetilde{{\mathcal L}}={\mathcal L}_1\oplus \frac{{\mathcal L}_2}{N}$ that $ \Lambda=\frac{\mathcal{L}_2}{N}\oplus\Gamma$ and 
$$\Gamma={{\mathcal L}_1}\oplus N\Lambda={{\mathcal L}_1}\oplus N(\widetilde{{\mathcal L}}\oplus N\Lambda)={{\mathcal L}_1}\oplus {{\mathcal L}_2}\oplus N(\mathcal{L}_1+N\Lambda)={\mathcal L}\oplus N\Gamma.$$
The first assertion follows.

We prove the second assertion by showing that $\widehat{\mu}_{N,\mathcal{D}}(\lambda-\lambda')=0$ for any distinct elements $\lambda,\lambda'\in\Lambda$. Indeed, we can respectively write them as 
$$\lambda=\frac{\ell_0}{N}+\gamma,\quad \lambda'=\frac{\ell_0'}{N}+\gamma',$$
where $\ell_0,\ell_0'\in{\mathcal L}_2$ and $\gamma, \gamma'\in\Gamma\subset\Z$. We prove it by the following two cases. 

If $\ell_0\neq\ell'_0$, then $M_{\B_s}\left(\frac{\ell_0-\ell_0'}{N}\right)=0$, as $(N, \B_s, {\mathcal L}_2)$ forms a Hadamard triple for each $s$. 
Therefore, 
\begin{eqnarray*}
 M_{\frac{\D}{N}}(\lambda-\lambda')&=&\frac{1}{n}\sum_{s=0}^{n-1}e\left(-\frac{a_s}{N}(\lambda-\lambda')\right)M_{\B_s}\left(\lambda-\lambda'\right)\\
 &=&\frac{1}{n}\sum_{s=0}^{n-1}e\left(-\frac{a_s}{N}(\lambda-\lambda')\right)M_{\B_s}\left(\frac{\ell_0-\ell_0'}{N}\right)\\ 
 &=&0,
\end{eqnarray*}
 hence $\widehat{\mu}_{N,\mathcal{D}}(\lambda-\lambda')=\prod_{j=1}^{\infty}M_{\frac{\D}{N^j}}(\lambda-\lambda')=0$.

If $\ell_0=\ell'_0$, then $\gamma\neq\gamma'$.  Let $k\geq1$ be the smallest integer such that $\gamma\not\equiv\gamma'\pmod{N^k}.$  Observing the invariant equation 
 $\Gamma={\mathcal L}\oplus N\Gamma$ yields that 
$$\Gamma={\mathcal L}+N{\mathcal L}+\cdots+N^{k-1}{\mathcal L}+ N^k\Gamma. 
$$
Thus, there exist two sequences $({\ell_1}, {\ell_2}, \cdots, \ell_k), (\ell_1', \ell_2', \cdots, \ell'_k)\in \mathcal{L}^k$ and $\gamma_k, \gamma_k'\in\Gamma$ such that
 \begin{equation*}\label{eq12}
\gamma=\sum_{j=1}^{k}N^{j-1}\ell_j+N^k\gamma_k,\quad \gamma'=\sum_{j=1}^{k}N^{j-1}\ell_j'+N^k\gamma'_k,
 \end{equation*} 
where $({\ell_1}, {\ell_2}, \cdots, \ell_{k-1})=(\ell_1', \ell_2', \cdots, \ell'_{k-1})$ and $\ell_k\neq\ell_k'$. This gives  
 $$\frac{\lambda-\lambda'}{N^{k-1}}=\frac{\gamma-\gamma'}{N^{k-1}}=\ell_k-\ell_k'+N(\gamma_k-\gamma_k')\in\Z.$$
Since $(N, \A\oplus\B_s, {\mathcal L})$ forms a Hadamard triple for each $s=0, 1, \ldots, n-1$, it follows that  
 $$M_{\frac{\A\oplus\B_s}{N}}\left(\frac{\lambda-\lambda'}{N^{k-1}}\right)=M_{\frac{\A\oplus\B_s}{N}}\left(\ell_k-\ell_k'\right)=0.$$
By Lemma \ref{lem4.1}, we have 
 \begin{eqnarray*}
M_{\frac{\D}{N^k}}\left(\lambda-\lambda'\right)   \cdot M_{\frac{\D}{N^{k+1}}}\left(\lambda-\lambda'\right)=M_{\frac{\D}{N}}\left(\frac{\lambda-\lambda'}{N^{k-1}}\right)   \cdot M_{\frac{\D}{N^{2}}}\left(\frac{\lambda-\lambda'}{N^{k-1}}\right)=0,
 \end{eqnarray*}
 hence $\widehat{\mu}_{N,\mathcal{D}}(\lambda-\lambda')=\prod_{j=1}^{\infty}M_{\frac{\D}{N^j}}(\lambda-\lambda')=0$. The proof is complete. 
\end{proof}

\bigskip

\subsection{Proof of Theorem \ref{th2}}

The proof proceeds via the chain of implications: $\rm{(i)} \Rightarrow \rm{(ii)} \Rightarrow \rm{(iii)} \Rightarrow \rm{(i)}$.

\bigskip

\noindent\textbf{Proof of ${\rm(i)} \Rightarrow {\rm(ii)}$.} 
According to Lemma \ref{prop.rep1},   $\La=\frac{{\mathcal L}_2}{N}\oplus\Gamma$ is an orthonormal set of $\mu_{N, \D}$.  By Theorem \ref{fomula2} {\rm(i)},
$$\widetilde{Q}_{\Gamma}(\xi)\le1,\qquad\forall\ \xi\in\R.$$
 Suppose that $\La$ is not a spectrum of  $\mu_{N,\mathcal{D}}$. By Theorem \ref{fomula2} {\rm(ii)},  $\widetilde{Q}_{\Gamma}(\xi)<1$ for some $\xi\in\R$.    Let
$$X=\left\{\xi\in B_{r}: \widetilde{Q}_{\Gamma}(\xi)=\inf_{\eta \in B_{r}}\widetilde{Q}_{\Gamma}(\eta)\right\}.$$ 
Since $\Gamma$ is an orthonormal set of $\mu_{N, \D}$,  for each $\gamma\in \Gamma$, we have
\begin{equation*}
    \widetilde Q_{\Gamma}(-\gamma)=\sum_{\gamma_1\in\Gamma}\left|\widehat{\mu}_{N, \D}(-\gamma +\gamma_1)\right|^2=\left|\widehat{\mu}_{N, \D}(0)\right|^2=1.
\end{equation*} 
Moreover, $\widetilde{Q}_{\Gamma}$ can be extended to an entire function on $\mathbb{C}$ and $B_r:=[-r,r]$ is a compact set with infinite cardinality. 
Therefore, $\#X<\infty$ and  $-X\subset \Gamma^{\complement}$, the complement of $\Gamma$. 


Next, we apply the Ruelle transfer operator to derive the existence of \textit{cycle points} for ${\mathcal L}$.  We choose  $$r=\max\left\{|\xi|: \xi\in \frac{\mathcal L}{N-1} \cup T(N,\mathcal{L}) \right\},$$ it follows that  $T(N,\mathcal{L})\subset B_{r}$ and ${\mathcal L}\subset (N-1)B_r$. Remember that $\phi_\ell(\xi)=N^{-1}(\xi+\ell), \ell\in\mathcal{L}$. Thus,  for every $\ell\in\mathcal{L}$ and $\xi\in B_r$, we have
$$\left|\phi_{\ell}(\xi)\right|=\left|N^{-1}(\xi+\ell)\right|\leq N^{-1}\left(r+r(N-1)\right)=r,$$ i.e., $\phi_{\ell}\left(B_r\right)\subset B_r$ for all $\ell\in{\mathcal L}.$


Fixing $\xi_{0}\in X$ and setting $Y_{0}=\left\{\xi_{0}\right\}$. For each $k\geq1$, we define $Y_{k}$ inductively by
$$Y_{k}=\Big\{\xi_{k}=\phi_{\ell_{k-1}}(\xi_{k-1}): \xi_{k-1}\in Y_{k-1},\ell_{k-1}\in\mathcal{L} \;\text{such that}\; \phi_{\ell_{k-1}}(\xi_{k-1})\in X\Big\}.$$
By Lemma \ref{lem1}, $\sum_{\ell\in\mathcal{L}}\mathbf{M}_{\ell}(\xi)
=1$ for all $\xi\in\R$,  and hence for any $\xi_{k-1}\in Y_{k-1}$, we have 
$$
\widetilde{Q}_{\Gamma}(\xi_{k-1})
=\sum_{\ell\in\mathcal{L}}\mathbf{M}_{\ell}(\xi_{k-1})\widetilde{Q}_{\Gamma}\left(\phi_{\ell}(\xi_{k-1})\right)
\geqslant \widetilde{Q}_{\Gamma}(\xi_{k-1})\sum_{\ell\in\mathcal{L}}\mathbf{M}_{\ell}(\xi_{k-1})
= \widetilde{Q}_{\Gamma}(\xi_{k-1}).
$$  
Thus, 
 $$\mathbf{M}_{\ell}(\xi_{k-1})>0\quad \Rightarrow\quad \phi_{\ell}(\xi_{k-1})\in Y_k,$$ i.e., each $\xi_{k-1}\in Y_{k-1}$ has at least one offspring in $Y_k$.  
Hence all the sets $Y_k$ are non-empty. 

For any $\xi_k\in Y_k$, there exist $\ell_{0},\ell_{1},\ldots,\ell_{k-1}\in\mathcal{L}$ such that
$$\xi_k=\phi_{\ell_{k-1}}\circ\cdots\circ\phi_{\ell_{0}}(\xi_0)=\frac1{N^k}(\xi_0+\ell_0+\cdots+N^{k-1}\ell_{k-1}).$$
Since $(N, {\mathcal A}\oplus{\mathcal B}_s, {\mathcal L})$ is a Hadamard triple, the elements of ${\mathcal L}$ are distinct modulo $N$. Thus, for different $(\ell_0, \cdots, \ell_{k-1})\neq(\ell_0', \cdots, \ell_{k-1}')$, the associated points $\xi_k$ and $\xi_k'$ are different.  This yields that the elements in $Y_k$ are distinct. Therefore,  the cardinality of $Y_k$ is increasing in $k$.

Since $Y_{k}\subset X$ for all $k$ and $\#X<\infty$, there exists $k_0$ such that $\#Y_k$ is constant for all $k \ge k_0$. Thus, for $k\geq k_0$, each $\xi_k\in Y_k$ has only one offspring $\xi_{k+1}=\phi_{\ell_k}(\xi_k)$, i.e., there is only one $\ell_k\in{\mathcal L}$ such that $\mathbf{M}_{\ell_k}(\xi_{k})>0$, and $\mathbf{M}_{\ell}(\xi_{k})=0$ for all $\ell\in{\mathcal L}\setminus\{\ell_k\}$. Starting from  $\xi_{k_{0}}$, we obtain a sequence  $\{\xi_{k}\}_{k\geq k_{0}}$. Since $X$ is finite, there exist integers $k_1 \ge k_0$ and $m > 0$ such that, for all $k \ge k_1$,$$\xi_{k}=\xi_{k+m}=\phi_{\ell_{k+m-1}}\circ\cdots\circ\phi_{\ell_{k}}(\xi_{k}).$$
In other words,  $\{\xi_{k}\}_{k\geq k_{1}}\subset X$ is a sequence of cycle points for ${\mathcal L}$. Obviously, $-\xi_k\notin \Gamma$ for all $k\geq k_1.$  

Finally, we show that $\xi_{k}\in \mathbb{Z}$ for all $k\geq k_1$. Fix $k\geq k_1$. As  discussed above, $\ell_k$ is the unique element in ${\mathcal L}$ such that $\mathbf{M}_{\ell_k}(\xi_k)>0.$ Thus, by Lemma \ref{lem1},  
 \begin{equation*}
\mathbf{M}_{\ell_k}(\xi_k)
=\left|\widetilde{M}_{\mathcal{D}}\left(\phi_{\ell_k}(\xi_k)\right)\right|^2
\frac{\sum_{j=0}^{n-1}\left|M_{\mathcal{B}_{j}}\left(\phi_{\ell_k}(\xi_k)\right)\right|^2}
{\sum_{s=0}^{n-1}\left|\widetilde{M}_{\mathcal{B}_{s}}(\xi_k)\right|^2}=1,
 \end{equation*}
which gives 
\begin{align}\label{eq-0} 
\begin{split}
0<\sum_{s=0}^{n-1}\left|\widetilde{M}_{\mathcal{B}_{s}}(\xi_k)\right|^2&=\left|\widetilde{M}_{\mathcal{D}}\left(\phi_{\ell_k}(\xi_k)\right)\right|^2\cdot 
\sum_{j=0}^{n-1}\left|M_{\mathcal{B}_{j}}\left(\phi_{\ell_k}(\xi_k)\right)\right|^2\\
&=\left|\widetilde{M}_{\mathcal{D}}\left(\xi_{k+1}\right)\right|^2\cdot 
\sum_{j=0}^{n-1}\left|M_{\mathcal{B}_{j}}\left(\xi_{k+1}\right)\right|^2.    
\end{split}
\end{align}
Multiplying $F(e(-\xi_k))$ on both sides of \eqref{eq-0},  and applying  \eqref{eq.tra} gives 
 \begin{eqnarray}\label{eq-1}
0<\sum_{s=0}^{n-1}
\left|{M}_{\mathcal{B}_{s}}(\xi_{k})\right|^2&=& \left|{M}_{\mathcal{D}}
\left(\xi_{k+1}\right)\right|^2\cdot\sum_{j=0}^{n-1}\left|M_{\mathcal{B}_{j}}(\xi_{k+1})\right|^2\le n\cdot \left|{M}_{\mathcal{D}}
\left(\xi_{k+1}\right)\right|^2.
\end{eqnarray}
By Cauchy-Schwarz's inequality,
\begin{align}\label{eq-2}
\begin{split}
n\cdot \left|{M}_{\mathcal{D}}\left(\xi_{k+1}\right)\right|^2
&=\frac{1}{n}\left|\sum_{s=0}^{n-1}e(-a_s\xi_{k+1})M_{\B_s}(N\xi_{k+1})\right|^2\\
&=\frac{1}{n}\left|\sum_{s=0}^{n-1}e(-a_s\xi_{k+1})M_{\B_s}(\xi_{k}+\ell_k)\right|^2\\
&\leq \frac{1}{n}\left(\sum_{s=0}^{n-1}\left|e(-a_s\xi_{k+1})\right|^{2}\right)\left(\sum_{s=0}^{n-1}\left|M_{\B_s}(\xi_{k})\right|^{2}\right) \\
& = \sum_{s=0}^{n-1}\left|M_{\B_s}(\xi_{k})\right|^{2}.    
\end{split}
 \end{align}
 By comparing the left-hand side of  \eqref{eq-1} with the right-hand side of  \eqref{eq-2}, we conclude that the original inequalities must actually be equalities. 
In particular,  \eqref{eq-1} yields that $M_{\B_j}(\xi_{k+1})=1$ for each $j=0, 1, \cdots, n-1$, and  \eqref{eq-2} implies that the following two vectors
$$\Big(e(-a_0\xi_{k+1}), \cdots, e(-a_{n-1}\xi_{k+1})\Big),\quad \Big(M_{\B_0}(\xi_k), \cdots, M_{\B_{n-1}}(\xi_k)\Big)$$ are linearly dependent.  
Notice that $a_0=0$. Thus,  if $k\geq k_1+1,$ then 
 $$e\left(-a_{s}\xi_{k+1}\right)=M_{\mathcal{B}_{s}}\left(\xi_{k}\right)=1$$ for all $s=0, 1,\cdots, n-1$, which means that  $$M_{\mathcal{D}_{0}}(\xi_{k})
 =\frac{1}{\#\mathcal{D}_{0}}\sum_{d\in \mathcal{D}_{0}}e(-d\xi_{k})
 =\frac{1}{n}\sum_{s=0}^{n-1}e\left(-a_{s}\xi_{k}\right)M_{\mathcal{B}_{s}}\left(\xi_{k}\right)=1.$$  Thus, $d\xi_{k} \in\Z$ for all $d\in \mathcal{D}_{0}$. It follows from $0\in\mathcal{D}_{0}$ and $\gcd{(\mathcal{D}_{0})}=1$ that $\xi_{k}\in\mathbb{Z}$ for all $k\geq k_1+1$.   Since  $\xi_{k}=\xi_{k+m}$  for all $k\geq k_1$, we obtain $\xi_{k}\in\mathbb{Z}$ for all $k\geq k_1$.

In summary, we have proved that 
 $-\xi_{k}\in \mathbb{Z}\setminus \Gamma$ for all $k\geq k_1$, (ii) follows.  \qed

\bigskip
\noindent{\bf Remark.}  
If an integral cycle point $\xi$ is the fixed point of the map $\phi_{\ell_{m}}\circ\cdots\circ\phi_{\ell_1}$, that is, 
$$
\xi=\phi_{\ell_{m}}\circ\cdots\circ\phi_{\ell_1}(\xi)=\sum_{k=1}^m\frac{\ell_{m-k+1}}{N^k}+\frac{\xi}{N^m}.
$$
After suitable adjustments and an induction argument, we obtain
\begin{equation*}
\label{eq-cycle}-\xi=\sum_{j=1}^{m}N^{j-1}\ell_{j}+N^m(-\xi)=\sum_{k=0}^{\infty}N^{mk}\left(\sum_{j=1}^{m}N^{j-1}\ell_{j}\right)=:\sum_{k=1}^{\infty}N^{k-1}\ell_k,
\end{equation*}
where $\ell_k=\ell_{k\pmod{m}}$ for all $k\geq1$. That is, $-\xi$ admits an $N$-adic periodic expansion with digits in ${\mathcal L}$. 

\bigskip

The following Proposition \ref{prop-orth} is used  to prove the implication   ${\rm(ii)} \Rightarrow {\rm(iii)}$ in Theorem \ref{th2}. 
\begin{prop}\label{prop-orth}
Let $\xi$ be an integral cycle point for ${\mathcal L}$ and $\gamma\in\Gamma$. If $-\xi\neq\gamma$ and $k$ is the smallest integer such that  $-\xi\not\equiv\gamma\pmod{N^k}$, then 
$$M_{\frac{\A\oplus\B_s}{N^k}}(\xi+\gamma)=0,\quad \forall s=0, 1, \ldots, n-1.$$ 
\end{prop}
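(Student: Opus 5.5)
The plan is to use the $N$-adic expansions of $-\xi$ and $\gamma$ with digits in ${\mathcal L}$, and then reduce to the Hadamard property of $(N,\A\oplus\B_s,{\mathcal L})$.

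By the Remark preceding the proposition, $-\xi$ admits a periodic $N$-adic expansion
\[
-\xi=\sum_{j=1}^{\infty}N^{j-1}\ell_j,\qquad \ell_j\in{\mathcal L},
\]
in the sense that $-\xi=\sum_{j=1}^{k}N^{j-1}\ell_j+N^k\eta_k$ for each $k$, where $\eta_k\in\Z$ is again an integral cycle point. Concretely, $-\eta_k$ is the point $\xi$ shifted along the cycle, so $\eta_k$ lies in a finite set.

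Since $\Gamma={\mathcal L}\oplus N\Gamma$, iteration gives $\Gamma={\mathcal L}+N{\mathcal L}+\cdots+N^{k-1}{\mathcal L}+N^k\Gamma$. Hence
\[
\gamma=\sum_{j=1}^{k}N^{j-1}\ell'_j+N^k\gamma_k,\qquad \ell'_j\in{\mathcal L},\ \gamma_k\in\Gamma\subset\Z.
\]
By Lemma \ref{lem.ha}(ii), the elements of ${\mathcal L}$ lie in distinct residue classes modulo $N$. So the digits $\ell_j$ and $\ell_j'$ are determined successively by the residues of $-\xi$ and $\gamma$ modulo $N,N^2,\dots$. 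This is the step to state carefully: reducing $\sum_{j\le i}N^{j-1}\ell_j$ modulo $N^i$ determines $\ell_i$ once $\ell_1,\dots,\ell_{i-1}$ are known.

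By minimality of $k$ we have $-\xi\equiv\gamma\pmod{N^{k-1}}$, and therefore $\ell_j=\ell'_j$ for $j<k$. The condition $-\xi\not\equiv\gamma\pmod{N^k}$ then forces $\ell_k\neq\ell'_k$. It follows that
\[
\frac{\xi+\gamma}{N^{k-1}}=(\ell'_k-\ell_k)+N(\gamma_k-\eta_k)\in\Z .
\]

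Each $M_{\frac{\A\oplus\B_s}{N}}$ is $1$-periodic on integers, since $\A\oplus\B_s\subset\Z$. Hence
\[
M_{\frac{\A\oplus\B_s}{N^k}}(\xi+\gamma)=M_{\frac{\A\oplus\B_s}{N}}\!\left(\frac{\xi+\gamma}{N^{k-1}}\right)=M_{\frac{\A\oplus\B_s}{N}}(\ell'_k-\ell_k+N(\cdots))=M_{\frac{\A\oplus\B_s}{N}}(\ell'_k-\ell_k).
\]
The right-hand side vanishes because $(N,\A\oplus\B_s,{\mathcal L})$ is a Hadamard triple (Definition \ref{defi1}(ii)): orthogonality of distinct columns of the Hadamard matrix gives $M_{\frac{\A\oplus\B_s}{N}}(\ell-\ell')=0$ for distinct $\ell,\ell'\in{\mathcal L}$. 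This argument is exactly parallel to the second case in the proof of Lemma \ref{prop.rep1}.

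The only real obstacle is justifying the digit expansion of $-\xi$ to every finite order with an integer remainder. If $\xi$ is fixed by $\phi_{\ell_m}\circ\cdots\circ\phi_{\ell_1}$, then $-\xi=\ell_1+N(-\phi_{\ell_1}(\xi))$, and $\phi_{\ell_1}(\xi)$ is again an integral cycle point. Integrality holds because $\phi_{\ell_1}(\xi)=\phi_{\ell_m}\circ\cdots\circ\phi_{\ell_2}$ composed around the cycle returns to a cycle point, and $\xi\in\Z$, $\xi=\phi_{\ell_m}\cdots(\cdot)$ shows $N\phi_{\ell_1}(\xi)=\xi+\ell_1\in\Z$. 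More carefully, each point on the cycle is an integer because it equals $N\cdot(\text{next point})-\ell$, read backwards around the cycle starting from the integer $\xi$.

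I would verify this integrality around the cycle explicitly and then induct on $k$.
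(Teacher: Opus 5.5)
Your proof is correct and follows the paper's argument. You expand $-\xi$ and $\gamma$ in base $N$ with digits in ${\mathcal L}$, use the distinct residues of ${\mathcal L}$ modulo $N$ and the minimality of $k$ to get $\ell_j=\ell_j'$ for $j<k$ and $\ell_k\neq\ell_k'$, and then invoke the Hadamard property of $(N,\A\oplus\B_s,{\mathcal L})$. Your finite expansion $-\xi=\sum_{j\le k}N^{j-1}\ell_j+N^k\eta_k$, with $\eta_k\in\Z$ a point on the cycle, is just a more careful version of the formal periodic expansion the paper takes from its Remark. One wording fix: the step actually uses that $M_{\frac{\A\oplus\B_s}{N}}$ is $N$-periodic, i.e.\ invariant under translation by $N\Z$, not ``$1$-periodic on integers.''
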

\begin{proof} According to the above Remark, $-\xi$ admits an $N$-adic periodic expansion with digits in ${\mathcal L}$
$$-\xi=\sum_{j=1}^{\infty}N^{j-1}\ell_j \quad \text{with} \quad \ell_j\in{\mathcal L}.$$
Because invariance $\Gamma={\mathcal L}\oplus N\Gamma$ implies  that $\Gamma={\mathcal L}+N{\mathcal L}+\cdots+N^{k-1}{\mathcal L}+ N^k\Gamma$, then $\gamma\in\Gamma$ can be represented as  
$$\gamma=\sum_{j=1}^kN^{j-1}\ell_j'+N^k\gamma_{k},$$ where 
$\ell_j'\in\mathcal{L}$ for all $1\leq j\leq k$ and $\gamma_{k}\in\Gamma$. 
As $k$ is the smallest integer such that $-\xi\not\equiv\gamma\pmod{N^k}$, we have
$$(\ell_1, \cdots, \ell_{k-1})=(\ell_1', \cdots, \ell_{k-1}') \quad\text{ and }\quad  \ell_k\neq\ell_k'.$$ Thus, there is an integer $t\in\Z$ such that   
$$\frac{\gamma+\xi}{N^{k-1}}=\frac{\gamma-(-\xi)}{N^{k-1}}=\ell_k'-\ell_k+Nt.$$
Since $(N, \A\oplus\B_s, {\mathcal L})$ forms a Hadamard triple for each $s=0, 1, \ldots, n-1$, it follows that 
 $$M_{\frac{\A\oplus\B_s}{N^{k}}}\left(\xi+\gamma\right)=M_{\A\oplus\B_s}\left(\frac{\ell_k'-\ell_k}{N}\right)=0.$$ The proof of Proposition \ref{prop-orth} is complete. 
\end{proof}

\bigskip

\noindent\textbf{Proof of ${\rm(ii)} \Rightarrow {\rm(iii)}$.} Let $\xi$ be an integral cycle point for $\mathcal{L}$ such that $-\xi\notin \Gamma$. We will prove assertion (iii) by showing that $\xi\in\mathcal{W}(\widehat{\mu}_{N, \mathcal{D}})$. Because of $\xi\in\Z$, we have 
$$\frac1n\sum_{s=0}^{n-1}|M_{\B_s}(\xi)|^2=1>0,$$
that is, $\xi\in{\mathcal O}.$
If $\gamma\in\Gamma$, then $-\xi\neq\gamma$. Let $k$ be the smallest integer such that $-\xi\not\equiv\gamma\pmod{N^k}$.  By Proposition \ref{prop-orth}, 
$$M_{\frac{\A\oplus\B_s}{N^k}}(\xi+\gamma)=0
$$ for each $s=0, 1, \ldots, n-1$. 
By Lemma \ref{lem4.1}, we have  
 \begin{eqnarray*}
M_{\frac{\D}{N^k}}\left(\xi+\gamma\right)   \cdot M_{\frac{\D}{N^{k+1}}}\left(\xi+\gamma\right)=0,
 \end{eqnarray*}which yields that $\widehat{\mu}_{N,\mathcal{D}}(\xi+\gamma)=\prod_{j=1}^{\infty}M_{\frac{\D}{N^j}}(\xi+\gamma)=0$. Since $\gamma\in\Gamma$ can be chosen arbitrarily, we obtain $\xi\in\mathcal{W}(\widehat{\mu}_{N, \mathcal{D}})$.  Hence, {\rm(iii)} follows.
 \qed

\bigskip

\noindent\textbf{Proof of ${\rm(iii)} \Rightarrow {\rm(i)}$.}
Suppose that $\mathcal{W}(\widehat{\mu}_{N, \mathcal{D}})\neq\emptyset$. By the definition of $\mathcal{W}(\widehat{\mu}_{N, \mathcal{D}})$, there exists $\xi_{0}\in \mathcal{O} \subset  B_{r}$ such that  $$\widehat{\mu}_{N,\mathcal{D}}(\xi_{0}+\gamma)=0, \qquad \forall\gamma\in \Gamma,$$ and hence  $Q_{\Gamma}(\xi_{0})=0$.
However, $\xi_{0}\in\mathcal{O}$ implies that $$\frac{1}{n}\sum_{s=0}^{n-1}\left|M_{\mathcal{B}_{s}}(\xi_{0})\right|^{2}>0.$$ Therefore, the above element $\xi_{0}\in B_{r}$  satisfies 
 $$Q_{\Gamma}(\xi_{0})<\frac{1}{n}\sum_{s=0}^{n-1}
 \left|M_{\mathcal{B}_{s}}(\xi_{0})\right|^{2}.$$  By Theorem \ref{fomula1}, $\La=\frac{{\mathcal L}_2}{N}\oplus\Gamma$ is not a spectrum of $\mu_{N, \D}$, {\rm(i)} is proved.
\qed

\medskip

\subsection{Proof of Theorem \ref{th3}}

In addition to Theorem \ref{th2}, the proof of Theorem \ref{th3} relies on Theorem \ref{cth4} given by Dutkay and Lai \cite{DL2017}.
Denote $$\mathcal{C}=\left\{c:c \;\text{is an integral cycle point for }\mathcal{L}\right\}.$$  

\begin{theo}[\!\cite{DL2017}]\label{cth4}
Let $N>1$ be an integer, and let $\mathcal{D}, \mathcal{L}\subset\mathbb{Z}$ with $0\in \mathcal{D}\cap \mathcal{L}$ and $\gcd(\mathcal{D})=1$. Suppose that $(N,\mathcal{D},\mathcal{L})$ forms a Hadamard triple, and let $\mu_{N,\mathcal{D}}$ be the measure generated by the IFS $\{N^{-1}(x+d):d\in\mathcal{D}\}$. Then 
$$\Gamma_0: =\Big\{\ell_0+\cdots+N^{n-1}\ell_{n-1}+N^{n}(-c): \ell_0,\dots, \ell_{n-1}\in{\mathcal L}, c \in {\mathcal C}, n\geq0\Big\}$$ is a spectrum of $\mu_{N,\mathcal{D}}$.
\end{theo}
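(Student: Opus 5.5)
We would prove Theorem \ref{cth4} by running the argument of Section \ref{sec.3} in the classical (non-product) setting, where it is simpler. Here the usual Ruelle operator \eqref{R-1} works directly, so no common zeros need to be factored out. Throughout, $r$ is chosen as in the proof of Theorem \ref{th2}, so that $\phi_\ell(B_r)\subset B_r$ for all $\ell\in\mathcal L$ and $T(N,\mathcal L)\subset B_r$. All cycle points for $\mathcal L$ lie in $T(N,\mathcal L)\subset B_r$, and since there are finitely many integers there, $\mathcal C$ is finite.

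\textbf{Step 1: invariance and orthogonality.} If $c\in\mathcal C$ is the fixed point of $\phi_{\ell_m}\circ\cdots\circ\phi_{\ell_1}$, then $c'=\phi_{\ell_1}(c)$ is again an integral cycle point. Moreover $-c=\ell_1+N(-c')$, since $Nc'=c+\ell_1$ and $c'\in\mathbb Z$. Consequently $\Gamma_0=\mathcal L+N\Gamma_0$. Distinct elements of $\mathcal L$ are incongruent mod $N$ by Lemma \ref{lem.ha}(ii), so every $\gamma\in\Gamma_0$ has a unique $N$-adic digit sequence in $\mathcal L$, and distinct elements differ in a first digit position $k$. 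As in Proposition \ref{prop-orth}, $(\gamma-\gamma')/N^{k-1}\equiv \ell_k-\ell_k'\pmod N$. The Hadamard property then gives $M_{\mathcal D}((\gamma-\gamma')/N^{k})=0$, hence $\widehat\mu_{N,\mathcal D}(\gamma-\gamma')=0$, and $\Gamma_0$ is orthonormal. In particular $Q:=Q_{\Gamma_0}\le 1$, and $Q$ is entire by Theorem \ref{cth2}(i).

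\textbf{Step 2: $Q$ is a fixed point of the transfer operator.} Using $\Gamma_0=\mathcal L\oplus N\Gamma_0$ and $\widehat\mu(\xi)=M_{\mathcal D}(\xi/N)\widehat\mu(\xi/N)$, the computation of Lemma \ref{lem-Q} (with $F\equiv 1$) gives
\[
Q(\xi)=\sum_{\ell\in\mathcal L}|M_{\mathcal D}(\phi_\ell(\xi))|^2\,Q(\phi_\ell(\xi)),
\]
where the weights sum to $1$ by Lemma \ref{lem.ha}(iii).

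\textbf{Step 3: the minimum argument.} Suppose $Q\not\equiv1$. Since $Q$ is entire and $Q\equiv1$ on $B_r$ would force $Q\equiv1$ on $\R$, we get $\min_{B_r}Q<1$. The minimum set $X\subset B_r$ is finite, because $Q$ is a nonconstant entire function. Averaging shows that if $\xi\in X$ and $|M_{\mathcal D}(\phi_\ell\xi)|>0$, then $\phi_\ell\xi\in X$. Following the $Y_k$ construction in the proof of Theorem \ref{th2}, the sets of descendants grow until they stabilize. Eventually each point $\xi_k$ has a unique admissible $\ell_k$, and the sequence becomes periodic, so we obtain cycle points $\xi_k\in X$. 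For these, uniqueness forces $|M_{\mathcal D}(\xi_{k+1})|^2=1$, so $e(-d\xi_{k+1})=1$ for all $d\in\mathcal D$. Since $0\in\mathcal D$ and $\gcd(\mathcal D)=1$, this gives $\xi_{k+1}\in\mathbb Z$. Thus some $\xi\in X$ is an integral cycle point, so $-\xi\in\Gamma_0$ (take $n=0$). Then $Q(\xi)\ge|\widehat\mu(0)|^2=1$, contradicting $\min_{B_r} Q<1$. Hence $Q\equiv1$, and Theorem \ref{cth2}(ii) shows that $\Gamma_0$ is a spectrum.

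The main obstacle is Step 3, specifically the stabilization and uniqueness bookkeeping that forces $|M_{\mathcal D}|=1$ along the cycle. This is where $\gcd(\mathcal D)=1$ is essential: without it the extremal cycle could be non-integral, and no contradiction would follow.
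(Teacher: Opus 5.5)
Your proof is correct. The paper gives no argument for Theorem \ref{cth4}. It imports the result from \cite{DL2017}, and later uses the minimality statement \cite[Lemma 4.1]{DL2017} in Lemma \ref{cth5}.

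What you do is run the paper's own proof of Theorem \ref{th2}, ${\rm(i)}\Rightarrow{\rm(ii)}$, in the degenerate case where no common zeros need to be factored out ($F\equiv 1$) and the weights are simply $\mathbf M_\ell(\xi)=|M_{\mathcal D}(\phi_\ell(\xi))|^2$. This is the classical Ruelle-operator minimum argument of \cite{LW2002}. Here the paper's Cauchy--Schwarz and linear-dependence step disappears: a weight equal to $1$ directly gives $|M_{\mathcal D}(\xi_{k+1})|=1$, and then $0\in\mathcal D$ and $\gcd(\mathcal D)=1$ give $\xi_{k+1}\in\mathbb Z$.

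Your route gives a self-contained proof of the one-dimensional statement. It shows the theorem is already within reach of the paper's own machinery rather than requiring an external input. The citation instead brings the broader framework of \cite{DL2017} and the minimality of $\Gamma_0$; your argument does not give minimality, but the statement does not need it.

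Two details are asserted without justification and should be written out.

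First, in Step 1 you need the integrality of $c'=\phi_{\ell_1}(c)$. Write the cycle as $c_0=c$, $c_j=\phi_{\ell_j}(c_{j-1})$, $c_m=c_0$. Then $c_{j-1}=Nc_j-\ell_j$, and running this backwards from $c_m=c_0\in\mathbb Z$ shows every point of the cycle is an integer. This is exactly what gives $-\mathcal C\subset\mathcal L+N\Gamma_0$, hence $\Gamma_0=\mathcal L\oplus N\Gamma_0$; the sum is direct because the elements of $\mathcal L$ have distinct residues mod $N$.

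Second, in Step 3 the identity-theorem step and the finiteness of $X$ need $B_r$ to be a nondegenerate interval and $Q$ to be nonconstant. We have $r>0$ because $\gcd(\mathcal D)=1$ forces $\#\mathcal L=\#\mathcal D\ge 2$. And $Q$ is nonconstant because $0\in\Gamma_0$ gives $Q(0)=1$, while $\min_{B_r}Q<1$.
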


\begin{lem}\label{cth5}
Suppose that $0\in \Gamma\subset\Z$. Then $\Gamma=\Gamma_0$ if and only if $\,\Gamma$ satisfies the following conditions $$-{\mathcal C}\subset\Gamma \quad  \text{and}\quad  \Gamma={\mathcal L}+N\Gamma.$$
\end{lem}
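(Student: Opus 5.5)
The plan is to prove the two inclusions directly from the definition of $\Gamma_0$ and the periodic $N$-adic expansions of cycle points given in the Remark after the proof of Theorem \ref{th2}.

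For ($\Rightarrow$), assume $\Gamma=\Gamma_0$. Taking $n=0$ in the definition of $\Gamma_0$ gives $-c\in\Gamma_0$ for every $c\in\mathcal{C}$, so $-\mathcal{C}\subset\Gamma$. For the invariance equation, the inclusion $\Gamma_0\subset{\mathcal L}+N\Gamma_0$ is shown by cases on an element $\gamma=\ell_0+\cdots+N^{n-1}\ell_{n-1}+N^n(-c)$.
\begin{itemize}
\item If $n\geq1$, write $\gamma=\ell_0+N(\ell_1+\cdots+N^{n-2}\ell_{n-1}+N^{n-1}(-c))$; the bracket lies in $\Gamma_0$.
\item If $n=0$, then $\gamma=-c$. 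Since $c$ is an integral cycle point, the Remark gives $-c=\ell_1+N(-c')$, where $c'=\phi_{\ell_1}(c)$ is again an integral cycle point of the same cycle. Integrality of $c'$ follows from $c'=(c+\ell_1)/N$ and the identity $-c=\ell_1+N(-c')$ read off from the periodic expansion. Hence $-c\in{\mathcal L}+N\Gamma_0$.
\end{itemize}
Conversely, ${\mathcal L}+N\Gamma_0\subset\Gamma_0$ holds because prepending a digit keeps the form $\ell_0+N(\cdots)+N^{n+1}(-c)$.

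For ($\Leftarrow$), assume $-\mathcal{C}\subset\Gamma$ and $\Gamma={\mathcal L}+N\Gamma$. Then $\Gamma_0\subset\Gamma$ follows by induction on $n$: for $n=0$ use $-\mathcal{C}\subset\Gamma$, and for the step use ${\mathcal L}+N\Gamma\subset\Gamma$. The reverse inclusion $\Gamma\subset\Gamma_0$ is the main obstacle. The approach is as follows.
\begin{enumerate}
\item Take $\gamma\in\Gamma$ and iterate $\Gamma={\mathcal L}+N\Gamma$ to obtain a sequence $\gamma=\gamma_0$ with $\gamma_{j}=\ell_j+N\gamma_{j+1}$, $\ell_j\in{\mathcal L}$ and $\gamma_j\in\Gamma\subset\Z$.
\item Bound the sequence. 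Since $|\gamma_{j+1}|\le(|\gamma_j|+\max|{\mathcal L}|)/N$, the $\gamma_j$ eventually lie in the bounded integer set $[-R,R]$ with $R=\max|{\mathcal L}|/(N-1)$, so only finitely many values occur.
\item Extract a cycle. Some value repeats, say $\gamma_p=\gamma_{p+m}$. Putting $\xi=-\gamma_p$ gives $\xi=\phi_{\ell_{p+m-1}}\circ\cdots\circ\phi_{\ell_p}(\xi)$, which one checks from $\gamma_p=\ell_p+\cdots+N^{m-1}\ell_{p+m-1}+N^m\gamma_{p+m}$. Hence $\xi\in\mathcal{C}$.
\item Conclude. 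This gives $\gamma=\ell_0+\cdots+N^{p-1}\ell_{p-1}+N^p(-\xi)\in\Gamma_0$.
\end{enumerate}
The key care point is the sign convention: the maps are $\phi_\ell(\xi)=(\xi+\ell)/N$, and the computation in step 3 must be checked against it. The hypothesis $0\in\Gamma$ is not needed beyond ensuring that $\Gamma$ is nonempty.
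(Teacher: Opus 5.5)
Your proposal is correct and follows the paper's proof in substance. For $\Gamma\subset\Gamma_0$ you iterate $\Gamma={\mathcal L}+N\Gamma$, confine the integers $\gamma_j$ to a finite set, and extract a repeated value that yields an integral cycle point; you bound via $|\gamma_{j+1}|\le(|\gamma_j|+\max|{\mathcal L}|)/N$, where the paper uses the $|\gamma_0|$-neighborhood of $T(N,{\mathcal L})$.

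The remaining differences are minor improvements. You prove $\Gamma_0\subset\Gamma$ by direct induction instead of citing the minimality of $\Gamma_0$ from \cite{DL2017}. You also correctly supply the $n=0$ case of $\Gamma_0\subset{\mathcal L}+N\Gamma_0$, using $-c=\ell_1+N(-c')$ with $c'=\phi_{\ell_1}(c)$ again an integral cycle point; the paper dismisses this direction as immediate.
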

\begin{proof} 
 $(\Rightarrow)$ It follows immediately from the definition of $\Gamma_0$. 

$(\Leftarrow)$ It has been proved in \cite[Lemma 4.1]{DL2017} that $\Gamma_{0}$ is the smallest set that satisfies $-{\mathcal C}\subset\Gamma_0$ and ${\mathcal L}+N\Gamma_0\subset\Gamma_0$. 
Thus, it  is enough to prove  $\Gamma\subset\Gamma_0$. 
In fact, for any $\gamma_0\in\Gamma$, the invariance  $\Gamma={\mathcal L}+N\Gamma$ implies that $\gamma_0=\ell_0+N\gamma_1$ for some $\ell_0\in{\mathcal L}$, $\gamma_1\in\Gamma$. Similarly, $\gamma_1=\ell_1+N\gamma_2$ for some $\ell_1\in{\mathcal L}$, $\gamma_2\in\Gamma$. Iterating this process, we obtain sequences
$\left\{\ell_k\right\}_{k=0}^{\infty}\subset \mathcal{L}$ and $\left\{\gamma_k\right\}_{k=0}^{\infty}\subset\Gamma$ such that 
$$\gamma_k=\ell_k+N\gamma_{k+1},\qquad k\geq 0.$$
Thus,  for each $k\geq0,$ one has 
$$-\gamma_{k+1}=\frac{\ell_k-\gamma_k}{N}=\cdots=\sum_{j=1}^{k+1}\frac{\ell_{k+1-j}}{N^{j}}-\frac{\gamma_0}{N^{k+1}}\in \left(T(N, {\mathcal L})-\frac{\gamma_0}{N^{k+1}}\right)\cap\Z\subset B_{|\gamma_0|}(T(N,\mathcal{L}))\cap\Z,$$ 
where $B_{|\gamma_0|}(T(N,\mathcal{L}))$ is the $|\gamma_0|-$neighborhood of the set $T(N, {\mathcal L})$. 
Since $B_{|\gamma_0|}(T(N,\mathcal{L}))\cap\Z$ is a finite set, 
there exist $k\geq0$ and $i\geq1$ such that $\gamma_{k+i}=\gamma_{k}$, that is, 
$$-\gamma_k=-\gamma_{k+i}=\sum_{j=1}^{i}\frac{\ell_{k+i-j}}{N^j}-\frac{\gamma_{k}}{N^{i}}=\phi_{\ell_{k+i-1}}\circ\cdots\circ\phi_{\ell_k}(-\gamma_k).$$ Therefore, 
$-\gamma_k$ is the fixed point of the map $\phi_{\ell_{k+i-1}}\circ\cdots\circ\phi_{\ell_k}$, i.e.,  $\gamma_k\in -\mathcal C,$ and hence   
$$\gamma_0=\ell_0+N\ell_1+\cdots+N^{k-1}\ell_{k-1}+N^{k}\gamma_{k}\in\Gamma_0.$$
This shows $\Gamma\subset\Gamma_0.$
\end{proof}

\bigskip

We are going to prove Theorem \ref{th3}.

\noindent
{\bf Proof of Theorem \ref{th3}.}
${\rm(i)}$  Suppose that there exists $s_0\in\{0,1,\ldots,n-1\}$ such that $\left(\mu_{N,\mathcal{A}\oplus\mathcal{B}_{s_0}}, \Gamma\right)$ forms a spectral pair,  but  $\La$ is not a spectrum of $\mu_{N,\mathcal D}$. By Theorem \ref{th2}  ${\rm(ii)}$, there exists an integral cycle point $\xi$ for $\mathcal{L}$ such that $-\xi\notin\Gamma$. Thus, for any $\gamma\in\Gamma$,  we have $\gamma\neq -\xi$. 
Let $k$ be the smallest integer such that $\gamma\not\equiv-\xi\pmod{N^k}$. We obtain, from Proposition \ref{prop-orth}, that  
$$M_{\frac{\A\oplus\B_{s_0}}{N^k}}(\gamma+\xi)=0.$$  Hence,
\begin{equation*}\widehat{\mu}_{N, \A\oplus\B_{s_{0}}}(\xi+\gamma)=0 \quad \text{for all}~ \gamma\in\Gamma,
\end{equation*}
which means that $-\xi$ is orthogonal to $\Gamma$ in $L^2(\mu_{N,\mathcal{A}\oplus\mathcal{B}_{s_0}})$. This contradicts  the assumption that $\Ga$ is a spectrum of the measure $\mu_{N,\mathcal{A}\oplus\mathcal{B}_{s_0}}$.

${\rm(ii)}$  Suppose that  $\Lambda=\frac{{\mathcal L}_2}{N}\oplus\Gamma$ is a spectrum of $\mu_{N,\mathcal{D}}$,  where  $\Gamma=N\Gamma\oplus\mathcal{L}\subset\mathbb{Z}$. By Theorem \ref{th2}, we have $-{\mathcal C}\subset\Gamma$. Now Lemma \ref{cth5} implies that  $\Gamma=\Gamma_0$. Therefore, by  Theorem \ref{cth4}, $\Gamma$ is a spectrum of $\mu_{N,\mathcal{A}\oplus\mathcal{B}_s}$ for each
$s$ with $\gcd(\mathcal{A}\oplus\mathcal{B}_s)=1$.
\qed

\bigskip

  

\subsection{Proof of Theorem \ref{th5}}

The proof of Theorem \ref{th5} requires a detailed analysis of the set  $${\mathcal C}=\{\xi: \xi \text{ is an integral cycle point for }{\mathcal L}\}.$$  
The set ${\mathcal C}$ was first studied by {\L}aba and Wang  \cite{LW2002}, which showed that ${\mathcal C}$ must be contained in the set $T(N,{\mathcal L})\cap\Z$. Subsequently,  the authors of \cite{DH2016} established the identity  $\mathcal{C}=T(N, {\mathcal L})\cap\Z$ for the digit set 
$\mathcal{L}:=\{0,p\},p\in 2\mathbb{Z}+1,$ when they studied the spectra of the middle-fourth Cantor measure $\mu_{4, \{0, 2\}}$. 
In this section,  we prove that this identity holds in general.

\medskip

 \begin{prop} \label{lem.cy}
      $\mathcal{C}=T(N, {\mathcal L})\cap\Z.$ 
 \end{prop}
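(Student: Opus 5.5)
Two inclusions. For $\mathcal{C}\subset T(N,\mathcal{L})\cap\Z$: if $\xi\in\Z$ is a fixed point of $\phi_{\ell_m}\circ\cdots\circ\phi_{\ell_1}$, then $\xi$ is the fixed point of a contraction in the IFS generated by $\{\phi_\ell\}$. Every such fixed point lies in the attractor: iterating the composite map on any point of $T(N,\mathcal{L})$ stays inside $T(N,\mathcal{L})$ (by invariance) and converges to $\xi$, and $T(N,\mathcal{L})$ is closed. So $\xi\in T(N,\mathcal{L})\cap\Z$.

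For the reverse inclusion, fix $\xi\in T(N,\mathcal{L})\cap\Z$. By self-similarity $T=\bigcup_{\ell}\phi_\ell(T)$, so $\xi=\phi_{\ell_1}(\eta_1)$ for some $\ell_1\in\mathcal{L}$ and $\eta_1\in T(N,\mathcal{L})$, where $\eta_1=N\xi-\ell_1\in\Z$. Iterating gives integers $\eta_k\in T(N,\mathcal{L})\cap\Z$ and digits $\ell_k$ with $\eta_{k-1}=\phi_{\ell_k}(\eta_k)$, $\eta_0=\xi$. This is the inverse dynamics: $\eta_k=N\eta_{k-1}-\ell_k$.

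The set $T(N,\mathcal{L})\cap\Z$ is finite since $T$ is compact. Hence the sequence $(\eta_k)$ repeats: $\eta_j=\eta_{j+p}$ for some $j\ge0$, $p\ge1$. Then $\eta_j=\phi_{\ell_{j+1}}\circ\cdots\circ\phi_{\ell_{j+p}}(\eta_{j+p})$ is a fixed point of a composite, so $\eta_j\in\mathcal{C}$.

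The main obstacle is pulling this back to $\xi=\eta_0$ itself, since the preimage path might enter a cycle without $\xi$ lying on it. The fix is to exploit the freedom in the choices and uniqueness of the forward digit. By Lemma \ref{lem.ha}(ii), elements of $\mathcal{L}$ are distinct mod $N$, so for an integer $\eta$ at most one $\ell$ gives $\phi_\ell(\eta)\in\Z$. The map $g(\eta)=\phi_{\ell(\eta)}(\eta)$, defined on integers $\eta$ admitting such an $\ell$, is therefore single-valued.

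Consider the finite set $S=T(N,\mathcal{L})\cap\Z$. The construction above shows every $\xi\in S$ has a preimage in $S$ under $g$, namely $\eta_1$ with $g(\eta_1)=\xi$. Thus $g:S'\to S$ is surjective, where $S'\subset S$ is the set of points on which $g$ is defined and maps into $S$ (preimages were chosen in $S$). Since $S$ is finite, $\#S'\le\#S$ and surjectivity force $S'=S$ and $g$ bijective on $S$, so $g$ is a permutation of $S$. Hence every $\xi\in S$ satisfies $g^{p}(\xi)=\xi$ for some $p\ge1$, i.e. $\xi$ is a fixed point of a composite $\phi_{\ell_p}\circ\cdots\circ\phi_{\ell_1}$. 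Therefore $\xi\in\mathcal{C}$.
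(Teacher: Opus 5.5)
Your proof is correct and follows essentially the same route as the paper. For $\mathcal{C}\subset T(N,\mathcal{L})\cap\Z$ you use that a cycle point lies in the attractor, and for the reverse inclusion you build the backward integer sequence in the finite set $T(N,\mathcal{L})\cap\Z$ and use that the integral forward step is unique (Lemma \ref{lem.ha}(ii)). The only difference is in the last step: the paper pulls the cycle back to $\xi_0$ by backward induction ($\xi_i=\xi_j$ forces $\ell_i=\ell_j$, hence $\xi_{i-1}=\xi_{j-1}$), whereas you note that the single-valued map $g$ is a surjective self-map of the finite set $T(N,\mathcal{L})\cap\Z$, hence a permutation, which also shows this set is a disjoint union of cycles.
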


\begin{proof} Denote $T:=T(N, {\mathcal L})$.
We first prove that $\mathcal{C}\subset T\cap\Z$. 
Assume that $\xi_{0}\in \mathcal{C}.$ Then there exist $m\in\mathbb{N}$, $\ell_{0},\ldots,\ell_{m-1}\in\mathcal{L}$ such that
$$\xi_{0}=\xi_{m}=\phi_{\ell_{m-1}}\left(\xi_{m-1}\right)=\cdots=\phi_{\ell_{m-1}}\circ\phi_{\ell_{m-2}}\circ\cdots\circ\phi_{\ell_{0}}\left(\xi_{0}\right),$$
i.e.
$$\xi_{0}=\frac{\xi_{0}}{N^{m}}+\frac{\ell_{0}}{N^{m}}+\cdots+\frac{\ell_{m-1}}{N}.$$
By iterating this equality indefinitely, we obtain the  $N$-adic expansion of $\xi_{0}$. Hence, $\xi_{0}\in T\cap\mathbb{Z},$  so $\mathcal{C}\subset T\cap\Z$.

We now prove the reverse inclusion. Taking any $\xi_0\in T\cap\Z$. Since $T=\bigcup_{\ell\in{\mathcal L}}\phi_{\ell}(T)$,
there exist $\ell_{1}\in\mathcal{L}$ and $\xi_{1}\in T$ such that
$$\xi_0= \phi_{\ell_{1}}\left(\xi_{1}\right), \quad\text{hence}\quad \xi_{1}=N\xi_0-\ell_{1}\in T\cap\Z.$$
By iterating this construction, we obtain sequences $\{\xi_{i}\}_{i=0}^\infty\subset T\cap\Z$ and $\{\ell_{i+1}\}_{i=0}^\infty\subset \mathcal{L}$ such that 
$$\xi_{i}=\phi_{\ell_{i+1}}\left(\xi_{i+1}\right),\quad  \forall i\geq 0.$$   Since $T$ is compact,  there exists $i,j\in\N$ with  $i<j$ such that
  $$\xi_{j}=\xi_{i}=\phi_{\ell_{i+1}}\circ\cdots\circ\phi_{\ell_j}(\xi_j),$$ 
 i.e.,  $\xi_i\in{\mathcal C}$.  From
 $$\xi_{i-1}=\phi_{\ell_i}(\xi_i)=\frac{\xi_i+\ell_i}{N}\in\Z\quad \text{and}\quad \xi_{j-1}=\phi_{\ell_j}(\xi_j)=\phi_{\ell_{j}}(\xi_i)=\frac{\xi_i+\ell_{j}}{N}\in\Z,$$ we get 
$\ell_{i}\equiv-\xi_i\pmod{N}$ and $\ell_{j}\equiv-\xi_i\pmod{N}$. 
 As the elements of ${\mathcal L}$ lie in distinct residue modulo $N$, it follows that  $\ell_{i}=\ell_j$.  Hence, 
  $$\xi_{j-1}=\xi_{i-1}=\phi_{\ell_{i}}\cdots\phi_{\ell_{j-1}}(\xi_{j-1})\in {\mathcal C}.$$
  By induction, we obtain $\xi_{0}\in {\mathcal C}$.   The proof is finished.
\end{proof}

\bigskip  

Thus, by Proposition \ref{lem.cy}, in order to verify the criterion given in Theorem \ref{th2} {\rm(ii)}, it is enough to examine the integers contained in the compact set $T(N, {\mathcal L}).$ Notice that 
$$T(N, {\mathcal L})\subset\left[\frac{\ell_{\text{min}}}{N-1}, \frac{\ell_{\text{max}}}{N-1}\right],$$
where $\ell_{\text{min}}, \ell_{\text{max}}$ denote the smallest and the largest elements of ${\mathcal L}$, respectively.  Therefore, we only need to verify finitely many integers. 
Now we can prove Theorem \ref{th5} by using Theorem \ref{th2} and Proposition \ref{lem.cy}.

\bigskip

\noindent{\bf Proof of Theorem \ref{th5}.} 
By Lemma \ref{lem.ha} \rm{(ii)}, the elements in the direct sum $\mathcal{L}_1\oplus \mathcal{L}_2$  are in distinct residue modulo $N$, so $N\geq\#{\mathcal L}_1\cdot\#{\mathcal L}_2\geq4.$ From Lemma \ref{lem.ha} \rm{(i)}, we can replace  ${\mathcal L}_1, {\mathcal L}_2$ by 
$$\widehat{\mathcal L}_1\subset\{-1, 0, 1, \ldots, N-2\},\quad \widehat{\mathcal L}_2\subset\{2-N,  \ldots, -1, 0, 1\}$$ with  $0\in\widehat{\mathcal L}_1\cap\widehat{\mathcal L}_2$, and $(N, {\mathcal{A}}, \widehat{\mathcal L}_1)$, $(N, \mathcal{B}_s,\widehat{\mathcal L}_2)$, $(N, \mathcal{A}\oplus \mathcal{B}_s, \widehat{\mathcal L}_1\oplus \widehat{\mathcal L}_2)$ are  Hadamard triples for all $s=0,1,\ldots,n-1$.

If neither $(-1, 2-N)$ nor $(N-2, 1)$  lies in $\widehat{\mathcal L}_1\times \widehat{\mathcal L}_2$, then 
$$\widehat{\mathcal L}:=\widehat{\mathcal L}_1\oplus\widehat{\mathcal L}_2\subset[2-N, N-2].$$
If either $(-1, 2-N)$ or $(N-2, 1)$ lies in $\widehat{\mathcal L}_1\times \widehat{\mathcal L}_2$,  we may assume without loss of generality that  $(-1, 2-N)$ does.  Since $0\in\widehat{\mathcal L}_1\cap\widehat{\mathcal L}_2$ and $\widehat{\mathcal L}_1\oplus\widehat{\mathcal L}_2$ is a  direct sum, it follows that  $N-2, N-3\not\in\widehat{\mathcal L}_1$. 
Since $2-N \equiv 2 \pmod{N}$, we replace
 $2-N$ by $2$ in  $\widehat{\mathcal L}_2$. Thus,  we further assume that 
$$\widehat{\mathcal L}_1\subset\{-1, 0, 1, \dots, N-4\},\quad \widehat{\mathcal L}_2\subset\{3-N,  \dots,  0, 1, 2\}.$$ This also gives $$\widehat{\mathcal L}:=\widehat{\mathcal L}_1\oplus\widehat{\mathcal L}_2\subset[2-N, N-2].$$

Consequently, the self-similar set $T(N, \widehat{\mathcal L})\subset\left[\frac{2-N}{N-1}, \frac{N-2}{N-1}\right]$ contains no integers other than $0$. 
By  Proposition \ref{lem.cy}, there are no non-zero integral cycle points for $\widehat{\mathcal L}$.  Let $\widetilde{\mathcal L}=\widehat{\mathcal L}_1\oplus \frac{\widehat{\mathcal L}_2}{N}$.  
Since $0\in\Lambda(N, \widehat{\mathcal L})=\widehat{\mathcal L}\oplus N\Lambda(N, \widehat{\mathcal L})$, it follows from Theorem \ref{th2} that 
$$\Lambda(N, \widetilde{\mathcal L})=\frac{\widehat{\mathcal L}_2}{N}+\Lambda(N, \widehat{\mathcal L})$$
is a spectrum of $\mu_{N, \D}$.
\qed


\section{Examples and the dual spectral set conjecture}\label{sec.ex}

In this section, we provide examples to illustrate how one can generate a self-replicating spectrum using product-form Hadamard
triples. Then we  prove Theorem \ref{thm.dua.}.


\subsection{Examples}

The following example shows that some digit set is not in product-form, but after multiplying some factor, the resulting new digit set can form a product-form Hadamard triple.


\begin{exam}
Let $N=24=2^3\cdot 3$ and the digit set 
$$\mathcal{D}=\{0, 1, 16, 49\}=\{0, 1, 2^4, 1+2^4\cdot3\}.$$
Then 
$$3\D=24\{0, 2\}\cup \left(3+24\{0, 6\}\right)$$
is a product-form digit set generated by  Hadamard triples $(24, \mathcal{A}, \mathcal{L}_1)$ and $(24, \mathcal{B}_s, \mathcal{L}_2), s=1,2$, where  
 $$\mathcal{A}=\{0,3\}, \mathcal{B}_1=\{0, 2\}, \mathcal{B}_2=\{0, 6\}, {\mathcal L}_1=\{0, 12\} ~\text{and} ~{\mathcal L}_2=\{0, 6\},$$ and the corresponding digit set $\D_0=\{0, 2\}\cup \left(3+\{0, 6\}\right)$ satisfies $\gcd(\mathcal D_0) = 1$. Note that 
$$T\left(24, \{0, 6\}\oplus\{0, 12\}\right)\subset \left[0, \frac{18}{23}\right]$$
contains no integers other than $0$.  By Proposition \ref{lem.cy} and  Theorem \ref{th2} {\rm(ii)}, $\mu_{N, 3\D}$ has a spectrum $\frac{1}{4}\{0, 1\}\oplus\Lambda(24, \{0, 6\}\oplus\{0, 12\})$.  Dividing by 3, $\mu_{N, \D}$ has a spectrum $\frac{3}{4}\{0, 1\}\oplus 3\Lambda(24, \{0, 6\}\oplus\{0, 12\})$.
\end{exam}


The condition "$\gcd(\A\oplus\B_s)=1$" in Theorem \ref{th3} {\rm(ii)} cannot be omitted. Here we give a product-form Hadamard triple $(N, \D, {\mathcal L}_1\oplus{\mathcal L}_2)$  such that $\Lambda=\frac{\mathcal L_2}{N}\oplus\Gamma$ is a spectrum of $\mu_{N, \D}$, yet $(\mu_{N,\A\oplus\B_s}, \Gamma)$ does not form a spectral pair for each $s=0, 1, \dots, n-1$.


\begin{exam}
Let $N=16$ and  $$\mathcal{D}=16\{0,3\} \cup \left(30+16 \{0,5\}\right), \qquad  {\mathcal L}_1\oplus{\mathcal L}_2=\{0, 12\}\oplus\{0, 40\}.$$  
Then $(16, \D, {\mathcal L}_1\oplus{\mathcal L}_2)$  forms a product-form Hadamard triple with the digit sets $\A=\{0, 30\}$, $\B_1=\{0, 3\}, \B_2=\{0, 5\}$. Let $\Gamma:=\Gamma_0$, where $\Gamma_0$ is given by Theorem \ref{cth4}.   As the corresponding digit set $\D_0=\{0, 3, 30, 35\}$ satisfies $\gcd(\mathcal D_0) = 1$, by Theorem \ref{th2}, $\frac{\mathcal L_2}{16}\oplus\Gamma$   is a spectrum of $\mu_{N, \D}$.

On the other hand, consider the self-similar spectral measure $\mu_{N,\mathcal{A}\oplus\mathcal{B}_{1}}$, where $$\gcd(\mathcal{A}\oplus\mathcal{B}_{1})=\gcd(\{0,3,30,33\})=3>1.$$
Since $\frac83$ is the fixed point of the map $\phi_{40}(\xi)=\frac1{16}(\xi+40)$, it follows that $\frac{8}{3}$ is a cycle point for ${\mathcal L}$. 
We can check immediately that $e(-\frac83)$ is orthogonal to $E(\Gamma)$ in $L^{2}(\mu_{N, \A\oplus\B_1})$.  Hence $\Gamma$ cannot be a spectrum of  $\mu_{N,\mathcal{A}\oplus\mathcal{B}_{1}}$.

Similarly, for the self-similar spectral measure $\mu_{N,\mathcal{A}\oplus\mathcal{B}_{2}}$, $$\gcd(\mathcal{A}\oplus\mathcal{B}_{2})=\gcd(\{0,5,30,35\})=5>1.$$
By the same argument as above, $\frac45$ is the fixed point of the  map $\phi_{12}(\xi)=\frac1{16}(\xi+12)$ and $e(-\frac45)$ is orthogonal to $E(\Gamma)$ in $L^{2}(\mu_{N, \A\oplus\B_2})$. Hence $\Gamma$ cannot be a spectrum of  $\mu_{N,\mathcal{A}\oplus\mathcal{B}_{2}}$.
\end{exam}

\subsection{The dual spectral set conjecture.} 
In this subsection, we prove that the  dual spectral set conjecture holds for a self-replicating translation set  (Theorem \ref{thm.dua.}). 

The concept of density is very important for characterizing the tiling and spectrum.

\begin{defi}\label{density}
The {\it lower} and {\it upper densities} of ${\mathcal J}$ in $\Z$ are defined, respectively,  as 
    $$\underline{D}({\mathcal J})=\underset{h\to\infty}{\underline{\lim}}\frac{\#({\mathcal J\cap [-h, h]})}{2h},\qquad \overline{D}({\mathcal J})=\overline{\lim_{h\to\infty}}\frac{\#({\mathcal J\cap [-h, h]})}{2h}.$$
If these are equal,  we refer to the common value as the {\it density} of ${\mathcal J}$, denoted  by ${D}({\mathcal J})$.
\end{defi}
The proof of Theorem \ref{thm.dua.} depends on the structure theorems of one-dimensional self-similar tiles established by Lagarias and Wang \cite{LW1996} and Li and Rao \cite{LR2025},  as well as the periodicity of  translational tilings and spectra in dimension one discovered by Lagarias and Wang \cite{LW1996-1}, Lau and Rao \cite{LR03}  and  Iosevich and Kolountzakis \cite{IK2013}, respectively.  
 

\begin{proof} [\rm\textbf{{Proof of Theorem \ref{thm.dua.}}}]
Since the properties of tiling  and spectra are invariant under the translation and scaling of the digit set $\D$, we may assume that $0\in\D$ and $\gcd(\D)=1$.  Let $\Lambda\subset\Z$ be the set such that $\Lambda=\D+N\Lambda$. 

\noindent{\bf Claim.} If $\Lambda$ is periodic, then $T(N, \D)$ is a self-similar tile.

In fact, by \cite[Theorem 1.1]{LW1996-1}, it suffices to prove that for each $m\geq1$, all $N^m$ expansions in ${\bf D}_m=\sum_{j=0}^{m-1}N^j\D$ are distinct. Suppose on the contrary that $\#{\bf D}_m\le N^m-1$ for some $m\geq2$. Then $\#{\bf D}_{km}\le (N^m-1)^k$ for all $k\geq 1$. Let $R$ be a period of $\Lambda$, then there is a nonempty set $\A\subset\{0, 1, \dots, R-1\}$ such that 
$$\Lambda={\mathcal A}+R\Z,$$ and hence $$D(\Lambda)=\frac{\#\A}{R}>0.$$
On the other hand, it follows from $\Lambda=\D+N\Lambda$ that 
$$\Lambda={\bf D}_{km}+N^{km}\Lambda={\bf D}_{km}+N^{km}\A+N^{km}R\Z,$$
we conclude that 
$$D(\Lambda)\le\lim_{k\to\infty}\frac{(\#{\bf D}_{km})\cdot(\#\A)}{N^{km}R}\le \lim_{k\to\infty}\frac{(N^m-1)^k\cdot(\#\A)}{N^{km}R}=0.$$
It is a contradiction. Therefore,  $T(N, \D)$ is a self-similar tile. The claim is proved.

{\rm (i)} Suppose that $\Lambda$ is a spectrum of the set $\Omega\subset\R$. Thus, $\Lambda$ is periodic, since any spectrum of one-dimensional spectral set must be periodic (\!\cite[Theorem 1.5]{IK2013}). Now the above claim implies that   $T(N, \D)$ is a self-similar tile.  Since  $\Lambda=\D+N\Lambda\subset\Z$,  it follows from   \cite[Theorem 1.2]{LR03} that $\Lambda$ is a tiling set of $T(N, \D)$. 


{\rm (ii)} Suppose that $\Lambda$ is a tiling set of a translational tile  $\Omega'\subset\R$. Thus, $\Lambda$ is periodic, since any tiling set of one-dimensional translational tile must be periodic (\!\cite[Theorem 1]{LW1996}). As is argued in {\rm(i)},  $(T(N, \D),\Lambda)$ is a tiling pair. 
Therefore, by \cite[Theorem 1.2]{LR2025}, there exists  $m\geq1$ such that ${\bf D}_m=\D\oplus\cdots\oplus N^{m-1}\D$ is a skew-product-form digit set 
$${\bf D}_m = \bigcup_{s=0}^{n-1} (a_s + N^m \B_s)$$
where $\A \oplus \B_s$ is a complete residue set modulo $N^m$ for all $s = 0, 1, \ldots, n-1$ and 
$$\Lambda=\A\oplus N^m\Z.$$
According to the assumption of {\rm(ii)}, $\A$ is a spectral set in the cyclic group  $\Z_{N^m}:=\Z/N^m\Z$. Thus,  there is a finite set ${\mathcal L}_{\A}\subset\Z$ such that $(\A, \frac{1}{N^m}{\mathcal L}_A)$ forms a spectral pair. Clearly, $\left([0, \frac1{N^m}],N^m\Z\right)$ and $(\frac{1}{N^m}{\mathcal L}_A,\A)$ are both spectral pairs. Therefore, $\La$ is a spectrum of the set $\Omega=[0, \frac{1}{N^m}]+\frac{1}{N^m}{\mathcal L}_{\A}.$  
The proof of Theorem \ref{thm.dua.} is complete. 
\end{proof}

\noindent\textbf{Data availability} There is no data associated with this manuscript.

\noindent\textbf{\large Declarations}

\noindent\textbf{Conflict of interest} On behalf of all authors, the corresponding authors state that there is no Conflict of interest.

\end{document}
Before this, We first recall the results of Coven and Meyerowitz and $\L$aba regarding tiling and spectrum on cyclic
groups. Let $\Phi_s(x)$ be the $s-$th  cyclotomic polynomial, which is the minimal polynomial of $e^{2\pi i/s}$ in $\Z[x].$ For $\A\subset\Z^+$, 
$P_{\A}(x)=\sum_{a\in\A}x^a$
is called the {\it mask polynomial} of $\A.$ We use 
$$S_{\A}=\left\{s=p^{\alpha}>1: p \text{ prime}, \Phi_s(x)|P_{\A}(x)\right\}$$
to denote the {\it prime-power spectrum} of $\A$. 
In \cite{CM1999}, Coven and Meyerowitz made use of the following two conditions to study the integer tiles:

{\bf (T1)} $\#\A=P_{\A}(1)=\prod_{s\in S_{\A}} \Phi_s(1)$,

{\bf (T2)} For any distinct prime powers $s_1, \dots, s_n\in  S_{\A}$, then
$\Phi_{s_1\cdots s_n}|P_{\A}.$

Coven and Meyerowitz proposed the following conjecture concerning (T1) and (T2) conditions:

\noindent{\textbf{ C-M conjecture.}} Let $\A\subset\Z$ be a finite set. If $\A$ tiles $\Z$, then $\A$ satisfies (T1) and (T2).

We summarize some results have been made on C-M conjecture. This will be used in the sequel.

\begin{theo}\label{CM} Let $\A\pmod{N}$ be a subset of $\Z_N$. Then

\begin{enumerate} 
  \item[(i)](\!\cite{CM1999, LL2022,LL2023,LL2025})  Suppose that $\A$ satisfies $(T1)$ and $(T2),$ then  $\A$ tiles $\Z_N$. Conversely, if $\A\oplus\B=\Z_N$, then $(T1)$ holds. Moreover, if $\#\A$ and $\#\B$ share at most three prime factors, then $(T2)$ holds. 
  \item[(ii)] (\!\cite{L2002}) Suppose that $\A$ satisfies $(T1)$ and $(T2)$, then $\A$ is a spectral set in $\Z_N$. $\delta_{\A}$ admits a spectrum ${\mathcal L}_{1}\subset\frac{1}{N}\Z$.
\end{enumerate}

\end{theo}
